\theoremstyle{plain}
  \newtheorem{thm}{Theorem}[section]
  \newtheorem{lem}[thm]{Lemma}
  \newtheorem{prop}[thm]{Proposition}
  \newtheorem{cor}[thm]{Corollary} 
  \newtheorem*{thma}{Theorem A}
  \newtheorem*{thmb}{Theorem B}
\theoremstyle{definition}
  \newtheorem{defn}[thm]{Definition}
  \newtheorem{rmk}[thm]{Remark}
  \newtheorem{claim}[thm]{Claim}
  \newtheorem*{ack*}{Acknowledgement}
  \newtheorem*{ques*}{Question}
\theoremstyle{plain}
\numberwithin{equation}{section}
\newcommand\ip[2]{\langle{#1},{#2}\rangle}
\newcommand\pl{\partial}
\newcommand\oh{\frac{1}{2}}
\newcommand\dd{{\mathrm d}}
\newcommand\w{\wedge}
\newcommand\sm{\sigma}
\newcommand\dt{\delta}
\newcommand\vep{\varepsilon}
\newcommand\vph{\varphi}
\newcommand\om{\omega}
\newcommand\ta{\theta}
\newcommand\gm{\gamma}
\newcommand\af{\alpha}
\newcommand\bt{\beta}
\newcommand\ld{\lambda}
\newcommand\vsm{\varsigma}
\newcommand\bld{\boldsymbol{\lambda}}
\newcommand\Om{\Omega}
\newcommand\Sm{\Sigma}
\newcommand\Ld{\Lambda}
\newcommand\Dt{\Delta}
\newcommand\CP{\mathcal{P}}
\newcommand\CS{\mathcal{S}}
\newcommand\CG{\mathcal{G}}
\newcommand\RSO{\mathrm{SO}}
\newcommand\RSU{\mathrm{SU}}
\newcommand\RGL{\mathrm{GL}}
\newcommand\BC{\mathbb{C}}
\newcommand\BR{\mathbb{R}}
\newcommand\BN{\mathbb{N}}
\newcommand\BH{\mathbb{H}}
\newcommand\fs{\mathfrak{s}}
\newcommand\fa{\mathfrak{a}}
\newcommand\fb{\mathfrak{b}}
\newcommand\fc{\mathfrak{c}}
\newcommand\fd{\mathfrak{d}}
\newcommand\fe{\mathfrak{e}}
\newcommand\fl{\mathfrak{l}}
\newcommand\fp{\mathfrak{p}}
\newcommand\fq{\mathfrak{q}}
\newcommand\fr{\mathfrak{r}}
\newcommand\ff{\mathfrak{f}}
\newcommand\fg{\mathfrak{g}}
\newcommand\td{\tilde}
\newcommand\op{\oplus}
\newcommand\bx{\mathbf{x}}
\newcommand\by{\mathbf{y}}
\newcommand\bh{\mathbf{h}}
\newcommand\bL{\mathbf{L}}
\newcommand\bM{\mathbf{M}}
\newcommand\bN{\mathbf{N}}
\newcommand\bfI{\mathbf{I}}
\DeclareMathOperator{\tr}{tr}
\DeclareMathOperator{\re}{Re}
\DeclareMathOperator{\im}{Im}
\DeclareMathOperator{\spin}{Spin}
\begin{document}
\title{Bernstein Theorems for Calibrated Submanifolds\\in $\BR^7$ and $\BR^8$}
\subjclass{Primary 53C38, Secondary 53A10, 49Q05, 35J60}
\author{Chun-Kai Lien}
\address{Department of Mathematics, National Taiwan University, Taipei 10617, Taiwan}
\email{d13221002@ntu.edu.tw }

\author{Chung-Jun Tsai}
\address{Department of Mathematics, National Taiwan University, and National Center for Theoretical Sciences, Math Division, Taipei 10617, Taiwan}
\email{cjtsai@ntu.edu.tw}
\begin{abstract}
    This paper explores the Bernstein problem of smooth maps $f:\BR^4\to\BR^3$ whose graphs form coassociative submanifolds in $\BR^7$. We establish a condition, expressed in terms of the second elementary symmetric polynomial of the map's slope, that ensures $f$ is affine.  A corresponding result is also established for Cayley submanifolds in $\BR^8$.
\end{abstract}

\maketitle

\section{Introduction}

The classical Bernstein theorem asserts that a complete minimal surface which is the graph of $f:\BR^2\to\BR^1$ must be a plane.  There have been many studies on this Bernstein type theorem in the general setting: suppose that a complete minimal submanifold in $\BR^{n+m}$ is the graph of some $f:\BR^n\to\BR^m$, is $f$ an affine function?  It is better understood in the hypersurface case, $m=1$.  One may consult \cite{EH90} and the references therein for the hypersurface case.

In higher codimensions, Lawson and Osserman \cite{LO77}*{Section 7} constructed a minimal cone in $\BR^7$, which is the graph of a Lipschitz map from $\BR^4$ to $\BR^3$.  The map is explicitly defined by
\begin{align} \label{LOmap}
    f(\bx) &= \frac{\sqrt{5}}{2}\frac{1}{|\bx|} (x_0^2+x_1^2-x_2^2-x_3^2,2x_1x_2+2x_0x_3,2x_1x_3-2x_0x_2)
\end{align}
for $\bx = (x_0,x_1,x_2,x_3)\in\BR^4$.  Consequently, the Bernstein type theorem cannot hold in its most general form in higher codimensions.  Known results often involve the (non-negative) square root of the eigenvalues of $(\dd f)^T(\dd f)$, namely, the singular values of $\dd f$.  Denote the singular values by $\{\ld_1,\cdots,\ld_{\min\{n,m\}}\}$.  For \eqref{LOmap}, the singular values are $\{\sqrt{5},\sqrt{5},\frac{\sqrt{5}}{2}\}$.  Notable previous results include:
\begin{itemize}
    \item For $n = 3$ and general $m$, Fischer-Colbrie \cite{FC80}*{Theorem 5.4} proved the Bernstein theorem under the condition of bounded gradient ($\ld_i\leq C$ for all $i$).
    \item For general $n, m$, Wang \cite{Wang03}*{Theorem 1.1} established the Bernstein theorem for maps with bounded gradient and strict area-decreasing property, specifically, $\ld_i\ld_j \leq 1-\vep$ for some $\vep>0$ and any $i\neq j$.
    \item For Lagrangian minimal submanifold in $\BR^{n}\op\BR^{n}$, Tsui and Wang \cite{TW02}*{Theorem A} proved the Bernstein theorem when the gradient is bounded and $\ld_i\ld_j \geq -1$ for any $i\neq j$.  In the Lagrangian case, $f = \nabla u$ for some $u:\BR^n\to \BR^1$, and the singular values are defined as the eigenvalues of $D^2u$.  In particular, $\ld_i$ could be negative.  A similar result was also obtained by Yuan \cite{Yuan02}.
\end{itemize}

Minimal Lagrangian submanifolds, also known as special Lagrangian submanifolds, are volume minimizers and a key class of calibrated submanifolds introduced by Harvey and Lawson \cite{HL82}.  In \cite{HL82}, the volume-minimizing property of special Lagrangians is proved by leveraging the complex structure of $\BR^n\op\BR^n\cong\BC^n$.

Harvey and Lawson \cite{HL82} identified three additional types of volume minimizers:
\begin{center}
\begin{tabular}{c c l} 
& dimension & ambient space \\
\hline
coassociative & $4$ & $\BR^7 \cong \BH\op\im\BH$ \\
associative & $3$ & $\BR^7 \cong \im\BH\op\BH$ \\
Cayley & $4$ & $\BR^8 \cong \BH\op\BH$
\end{tabular}
\end{center}
They are defined using the octonionic structure of their respective ambient spaces.  Their definitions will be briefly reviewed in Section \ref{sec_LA7}, \ref{sec_LA8} and \ref{sec_sfform}.  Note that associative submanifolds are of $3$ dimensions.  Therefore, if an associative submanifold is the graph of an entire map from $\BR^3$ to $\BR^4$ with bounded gradient, Fischer-Colbrie's result \cite{FC80}*{Theorem 5.4} can be applied to conclude its flatness.  Non-flat examples with unbounded gradients are not hard to be constructed, such as by modifying the example in \cite{Yuan02}*{p.118}.

\subsection{The Main Results}

This paper examines the Bernstein problem for coassociative submanifolds in $\BR^7$ and Cayley submanifolds in $\BR^8$.  Given the Lawson--Osserman cone, defined by the graph of \eqref{LOmap}, is coassociative \cite{HL82}*{IV.3}, it is clear that the bounded gradient alone is insufficient in the coassociative case.

For coassociative submanifolds graphical over $\BH\op\{0\}$, their singular values can be shown to satisfy
\begin{align}
    \ld_1 + \ld_2 + \ld_3 &= \ld_1\ld_2\ld_3 ~.
\label{coass_SV} \end{align}
Similar to special Lagrangian submanifolds, $\ld_i$ could be negative.  However, unlike the special Lagrangian case, sign flipping altogether is an ambiguity (see Remark \ref{rmk_coass_SVD}).  It suggests that conditions based on quadratic expressions of $\ld_i$ are more useful.

Moduli sign flipping, the locus of \eqref{coass_SV} in $\BR^3$ comprises two components (Lemma \ref{lem_SVD_3d}): one containing the origin, and the one in the first octant.  Here is our result in the coassociative case.

\begin{thma}[Theorem \ref{thm_Bernstein_coass}]
    Suppose that a complete, coassociative submanifold in $\BH\op\im\BH$ is the graph of a map from $\BH$ to $\im\BH$, whose singular values satisfy one of the following conditions:
    \begin{itemize}
        \item The singular values belong to the component containing the origin, and $\ld_1\ld_2+\ld_2\ld_3+\ld_3\ld_1\geq -2\sqrt{2} + \vep$ for some $0<\vep<\!<1$.
        \item Or, the singular values are all positive (or all negative), and $\ld_i\ld_j \leq 4$ for any $i\neq j$.
    \end{itemize}
    Then, the coassociative submanifold is an affine space.
\end{thma}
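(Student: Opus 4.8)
The plan is to realize a coassociative graph as a minimal submanifold and then run a Bochner/Simons-type argument on a well-chosen geometric quantity built from the singular values, following the strategy pioneered by Tsui--Wang \cite{TW02} and Wang \cite{Wang03}. Since coassociative submanifolds are calibrated, they are in particular volume-minimizing, hence minimal; moreover, being the graph of $f:\BH\to\im\BH$, the submanifold $M$ is automatically complete in the induced metric and parabolic-friendly once we know the right pointwise estimate. The heart of the matter will be a suitable ``test function'' $\Phi$ — morally a polynomial in the $\ld_i$ such as (a normalization of) $\prod_i (1+\ld_i^2)^{-1/2}$ or an expression adapted to the second symmetric function $\sm_2=\ld_1\ld_2+\ld_2\ld_3+\ld_3\ld_1$ — that is bounded, positive, and whose composition with the Gauss map is (super/sub)harmonic on $M$.

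First I would set up coordinates: pull back a parallel calibration-type form (the coassociative $4$-form, or rather the complementary structure) to $M$, diagonalize $\dd f$ by a singular value decomposition so that at a point the tangent space is spanned by $e_i = (1+\ld_i^2)^{-1/2}(\pl_{x_i} + \ld_i\, \pl_{y_i})$ in the normalized frame, and express the restriction of the relevant constant-coefficient form in terms of the $\ld_i$. Using \eqref{coass_SV}, the singular values live on an explicit surface in $\BR^3$, and (by Lemma~\ref{lem_SVD_3d}) on the component we care about $\Phi$ is pinned to a definite range — this is exactly where the numerical thresholds $-2\sqrt2$ and $4$ enter. Concretely, the constraint $\sm_2 \ge -2\sqrt2 + \vep$ on the origin-component, resp. $\ld_i\ld_j\le 4$ on the positive octant, should translate into a strict inequality $\Phi \ge c > 0$ (or $\Phi \le C$), which is the positivity one needs to later conclude rigidity rather than merely vanishing of curvature.

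The analytic core is a Simons-type computation: differentiating $\Phi\circ(\text{Gauss map})$ twice along $M$ and using minimality (so the Gauss map is harmonic in the appropriate sense) plus the Gauss and Codazzi equations, one gets $\Dt_M \Phi = \langle \text{(algebraic curvature term)}, |A|^2\text{-type data}\rangle$ where the algebraic term is controlled by the Hessian of $\Phi$ as a function on the constraint surface \eqref{coass_SV}. The job is to show this Hessian has a sign (convexity/concavity) precisely under the stated bounds on $\sm_2$ or on the products $\ld_i\ld_j$; then $\Phi\circ G$ is subharmonic (or superharmonic), bounded, on a complete minimal — hence (by the Cheng--Yau / volume-growth estimates available for minimal graphs, or by a stability/parabolicity argument as in \cite{Wang03}) parabolic — manifold, so $\Phi\circ G$ is constant. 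Feeding constancy back into the Simons identity forces $A\equiv 0$, i.e. $M$ is affine.

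I expect the main obstacle to be the algebraic eigenvalue computation: verifying that the Hessian of the chosen $\Phi$, restricted to the tangent directions of the constraint surface \eqref{coass_SV}, is definite under the hypotheses — and, relatedly, choosing the \emph{right} $\Phi$ so that the thresholds come out to be exactly $-2\sqrt2$ and $4$ rather than something weaker. This is delicate because the coassociative constraint is not symmetric under global sign flip of all $\ld_i$ (Remark~\ref{rmk_coass_SVD}), so the two components behave differently and likely require two separate test functions (or a single one analyzed on two regions). A secondary technical point is justifying parabolicity / the Liouville step on the non-compact $M$: I would either invoke the stability of calibrated submanifolds together with a Sobolev or Caccioppoli argument, or show directly that the coassociative graph has Euclidean volume growth so the Omori--Yau or Cheng--Yau maximum principle applies to the bounded (sub/super)harmonic function $\Phi\circ G$. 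The epsilon in the first case is the usual ``strictness'' room one needs to upgrade ``$A$ is parallel'' to ``$A=0$.''
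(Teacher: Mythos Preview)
Your high-level strategy matches the paper's: take $*\Om = \prod_i (1+\ld_i^2)^{-1/2}$, compute $\Dt_M \log(*\Om)^{-1}$ via the formula in \cite{Wang02}, show it dominates a positive multiple of $|A|^2$ under the stated hypotheses, and conclude $A\equiv 0$ by the scaling/compactness argument of \cite{Wang03,TW02}. But two of your key steps are off the mark.

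First, the algebraic core is not what you describe. The output of the Bochner computation is \emph{not} ``controlled by the Hessian of $\Phi$ as a function on the constraint surface \eqref{coass_SV}''. Rather, $\Dt_M\log(*\Om)^{-1}$ is a quadratic form in the second fundamental form components $h_{\af ij}$ themselves, with $\bld$-dependent coefficient matrices --- in the paper, explicit $3\times3$ and $4\times4$ matrices $\bL_0(\bld)$ and $\bL(\bld)$. The task is to show \emph{these} matrices are uniformly positive definite on the stated region of $\CG$; this is a polynomial-algebra problem in the $\ld_i$, handled in the paper by computing their determinants and verifying positivity via Bernstein polynomial expansions on explicit intervals. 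There is no Hessian-on-a-surface interpretation, and the thresholds $-2\sqrt{2}$ and (a number slightly above) $4$ emerge from where $\det\bL_0$ and $\det\bL$ vanish, not from convexity of $*\Om$.

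Second, you omit the ingredient that makes this quadratic form tractable at all: the coassociative condition imposes extra linear relations on the second fundamental form beyond minimality (Lemma~\ref{lem_coass_h}, coming from the parallelism of the $G_2$ cross product). These cut the $30$ a~priori components of $h$ down to $15$ free ones and, crucially, block-diagonalize the quadratic form into one $3\times3$ block and three $4\times4$ blocks related by cyclic permutation of $(\ld_1,\ld_2,\ld_3)$. Without these $G_2$-specific symmetries the positivity check would be intractable and the sharp constants invisible.

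Finally, your proposed endgame (parabolicity, Omori--Yau/Cheng--Yau on $M$ directly) does not close the argument in dimension $4$: a graph over $\BR^4$ with bounded slope has Euclidean volume growth but $\BR^4$ is not parabolic, so a bounded subharmonic function need not be constant. The paper follows \cite{Wang03,TW02}: the matrix positivity first gives a uniform bound on $|\bld|$ (Lemma~\ref{lem_coass_bdd_ld}), hence bounded gradient; then one blows down to a coassociative cone, applies the differential inequality and the maximum principle on the compact link to flatten the cone, and invokes Allard regularity (with Federer dimension reduction if the cone is singular). The $\vep$ in the hypothesis is needed not to upgrade ``$A$ parallel'' to ``$A=0$'', but to make the positive-definiteness \emph{uniform} so that it survives the blow-down limit.
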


For a Cayley submanifold which is graphical over $\BH\op\{0\}$, its singular values must obey
\begin{align}
    \ld_0 + \ld_1 + \ld_2 + \ld_3 &= \ld_1\ld_2\ld_3 + \ld_2\ld_3\ld_0 + \ld_3\ld_1\ld_0 + \ld_0\ld_1\ld_2 ~.
\label{Cayley_SV} \end{align}
In this case, we have the following theorem.

\begin{thmb}[Theorem \ref{thm_Bernstein_Cayley}]
    Suppose that a complete, Cayley submanifold in $\BH\op\BH$ is the graph of a map from $\BH$ to $\BH$, whose singular values satisfy
    \begin{align*}
        -\sqrt{6}+\vep \leq \ld_0\ld_1 + \ld_0\ld_2 + \ld_0\ld_3 + \ld_1\ld_2 + \ld_1\ld_3 + \ld_2\ld_3 \leq 0
    \end{align*}
    for some $0<\vep<\!<1$.  Then, the Cayley submanifold is an affine space.
\end{thmb}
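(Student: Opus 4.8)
The plan is to follow the by-now standard strategy for Bernstein-type theorems for calibrated graphs: reduce the problem to a Liouville statement for a harmonic-type quantity on the submanifold, obtained by composing a suitable function on the Grassmannian (or on the calibration Lagrangian) with the Gauss map. Concretely, I would first recall from Sections \ref{sec_LA8} and \ref{sec_sfform} that a Cayley submanifold $\Sigma\subset\BH\op\BH$ is minimal and that its tangent planes lie in the ``Cayley Grassmannian'' cut out inside $\mathrm{Gr}(4,8)$ by the Cayley calibration $\Phi$. Since $\Sigma$ is graphical over $\BH\op\{0\}$, the singular value constraint \eqref{Cayley_SV} holds pointwise, and the hypothesis $-\sqrt6+\vep\le e_2(\bld)\le 0$ (with $e_2$ the second elementary symmetric polynomial of the singular values $\ld_0,\ld_1,\ld_2,\ld_3$) confines the Gauss image to a compact subset of the ``area-decreasing-like'' region. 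The first real step is therefore to construct an explicit smooth function $\Psi$ on the relevant piece of the Cayley Grassmannian — the natural candidate being the pullback of the volume element of $\BH\op\{0\}$ to $\Sigma$, i.e. $\ast\,\mathrm{vol}_{\BH}=\prod(1+\ld_i^2)^{-1/2}$, or some monotone modification $\Theta=F(\ld_0,\ld_1,\ld_2,\ld_3)$ of it — which is positive on the admissible region, and whose composite $\ta=\Psi\circ G$ with the Gauss map $G$ satisfies a good elliptic differential inequality.

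The second step is the differential-geometric computation: using the minimality of $\Sigma$ and the Codazzi/Simons machinery (as in Wang \cite{Wang03} and Tsui--Wang \cite{TW02}), derive an inequality of the form $\Delta_\Sigma \log\ta \ge c\,|A|^2$ on $\Sigma$, where $\Delta_\Sigma$ is the induced Laplacian, $|A|$ is the norm of the second fundamental form, and $c=c(\bld)>0$ uniformly on the region picked out by $-\sqrt6+\vep\le e_2\le 0$. This is where the exact constant $-\sqrt6$ should enter: one expands $\Delta_\Sigma\log\ta$ in terms of the $\ld_i$ and the components of $A$, uses the constraint \eqref{Cayley_SV} to eliminate one degree of freedom, and then shows the resulting quadratic form in the entries of $A$ is nonnegative precisely when the singular values stay in the stated range. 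I would organize this by first treating $\Theta=\ast\mathrm{vol}_\BH$ itself, computing $\Delta_\Sigma\Theta$ (or $\Delta_\Sigma\log\Theta$) in an adapted orthonormal frame diagonalizing $\dd f$, à la \cite{TW02}*{Section 3}, and then checking the sign condition as an optimization problem over the admissible $(\ld_i)$ and the symmetric second-derivative data, with \eqref{Cayley_SV} as a Lagrange constraint.

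The third step is the Liouville/volume-growth argument. Since $\Sigma$ is a complete minimal submanifold of Euclidean space it has at most Euclidean volume growth (monotonicity formula), and being a minimal graph it is in fact an entire Lipschitz-type graph, so one can quote (or reprove) the conclusion that a bounded subharmonic function $u$ on such a $\Sigma$ with $\Delta_\Sigma u\ge c|A|^2$ must have $\Delta_\Sigma u\equiv 0$ and $|A|\equiv 0$ — e.g. via a log-cutoff / Yau-type argument using Euclidean volume growth, exactly as in \cite{Wang03}. Applied to $u=-\log\ta$ (bounded below, since $\ta$ is bounded above by $1$, and bounded above because $e_2\ge -\sqrt6+\vep$ keeps $\ta$ away from $0$), this forces $A\equiv 0$, hence $\Sigma$ is affine. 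The main obstacle I anticipate is the second step: verifying that the natural function $\Theta$ (or finding the right modification of it) yields a differential inequality with a genuinely \emph{positive} coefficient in front of $|A|^2$ across the whole region $-\sqrt6+\vep\le e_2\le 0$ — in particular handling the boundary value $e_2=0$ (where the ``area-decreasing'' borderline sits) and the nontrivial algebra forced by the cubic constraint \eqref{Cayley_SV}, which couples all four singular values and makes the admissible set geometrically intricate. A secondary technical point is confirming the boundedness of $\ta$ from both sides, i.e.\ that the two-sided bound on $e_2$ really does keep the Gauss image in a compact region of the Cayley Grassmannian; this should follow by combining $e_2\ge-\sqrt6+\vep$ with \eqref{Cayley_SV} to get uniform bounds on each $\ld_i^2$, but it needs to be checked carefully.
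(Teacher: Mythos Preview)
Your outline is essentially the paper's approach: the test function is exactly $*\Om=\prod_i(1+\ld_i^2)^{-1/2}$, and the heart of the proof is the inequality $\Dt\log(*\Om)^{-1}\ge c\,|A|^2$ on the region \eqref{condition_Bernstein_Cayley}. Two points deserve emphasis, however, since they are where the real work lies and your proposal passes over them.

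First, in your ``second step'' you plan to treat the positivity of the quadratic form in the components of $A$ as a generic optimization. The paper does not do this directly: it first invokes the Cayley-specific linear relations among the $h_{\af ij}$ (Lemma~\ref{lem_Cayley_h}), which cut the $40$ components of the second fundamental form down to $24$ independent ones and, crucially, decouple \eqref{log_Om_Cayley} into four $6\times6$ blocks governed by a single matrix $\bM(\eta,\ld,\mu,\nu)$ (see \eqref{quadr_Cayley}). Without this structural reduction the quadratic form in $A$ is a $40\times40$ object coupled across all indices, and the optimization you describe would be intractable. You should expect to need, and use, these Cayley identities.

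Second, even after that reduction, showing $\bM>0$ on the region $-\sqrt{6}+\vep\le\sm_2\le0$ is not a short Lagrange-multiplier computation: the paper rewrites $\det\bM$ in the variables $(s_1,s_2,w)=(\ld_1+\ld_2+\ld_3,\ld_1\ld_2+\ld_2\ld_3+\ld_3\ld_1,\ld_0)$, subtracts a multiple of the discriminant $(\ld_1-\ld_2)^2(\ld_2-\ld_3)^2(\ld_3-\ld_1)^2$, and then verifies positivity via an explicit quartic-in-$s_1$ analysis with large integer coefficients (Appendix~\ref{sec_claim}). The constant $\sqrt{6}$ is sharp for this particular $\bM$; it is not going to fall out of a soft argument.

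Finally, for the conclusion step the paper follows \cites{Wang03,TW02} and uses blow-down to a cone plus Allard regularity (and Federer dimension reduction for possible singularities), rather than a Yau-type Liouville argument. Your proposed route needs more care than stated: the intrinsic Ricci curvature of a minimal submanifold is not nonnegative, so Yau's theorem does not apply directly, and the naive cutoff integration of $\Dt u\ge c|A|^2$ requires control of $|\nabla u|$ that you have not yet secured. The boundedness of $\bld$ (your ``secondary technical point'') is indeed needed and is proved separately in the paper as Step~1 of Theorem~\ref{thm_Bernstein_Cayley}.
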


We briefly compare our conditions with the area-decreasing condition, $|\ld_i\ld_j| \leq 1$.  Initially, the conditions of Theorem A and B might appear more restrictive than the area-decreasing condition.  However, this is not the case.  The area-decreasing condition has to be considered in conjunction with \eqref{coass_SV} or \eqref{Cayley_SV}.  For example, $(\ld_1,\ld_2,\ld_3) = (\sqrt[4]{8}, -\sqrt[4]{8}, 0)$ does not satisfy the area-decreasing.
By expressing $\ld_3$ as a function of $\ld_1$ and $\ld_2$ using \eqref{coass_SV}, one can visualize the conditions on the $\ld_1\ld_2$-plane.  In Figure \ref{areadec}, the solid curve enclosed the region corresponding to the first condition in Theorem A (with $\vep = 0$), while the dotted curves enclose the hexagon-like region representing the area-decreasing condition.  It is also not hard to see that our condition is better than the condition in \cite{JXY18}.

\begin{figure}
\centering
    \includegraphics[width=0.35\textwidth]{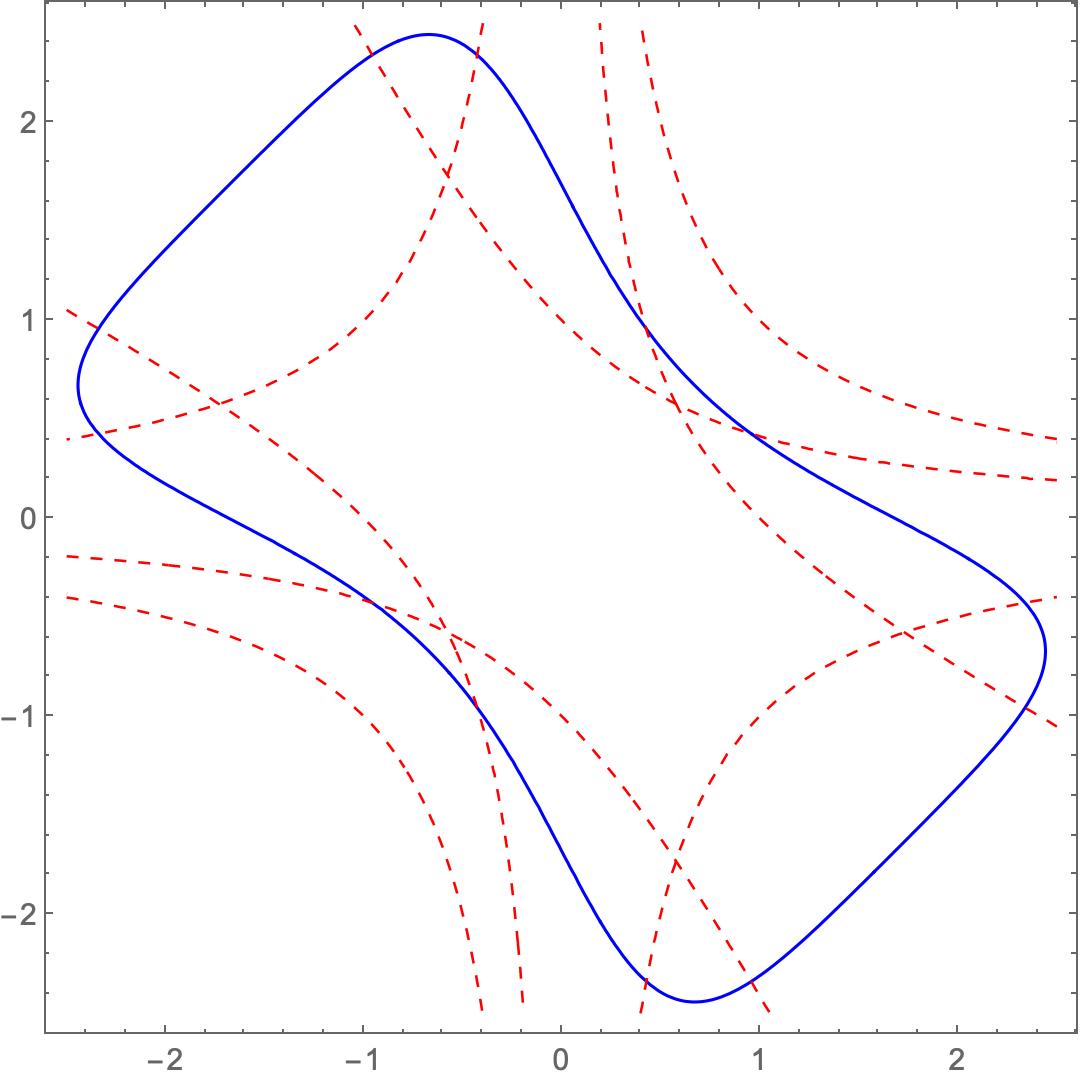}
    \caption{The conditions}
    \label{areadec}
\end{figure}

This paper is organized as follows:  Section \ref{sec_SVD} examines the properties of the loci defined by \eqref{coass_SV} and \eqref{Cayley_SV}.  Section \ref{sec_LA7}, \ref{sec_LA8} and \ref{sec_sfform} review the preliminaries of calibrated submanifolds in $\BR^7$ and $\BR^8$, and establish the singular value decompositions.  For potential future use, the discussion of associative submanifolds is also included.  Section \ref{sec_Bernstein_coass} provides the proof of Theorem A.  Section \ref{sec_Bernstein_Cayley} presents the proof of Theorem B, complemented by a technical lemma in Appendix \ref{sec_claim}.

\begin{ack*}
    The authors are grateful to Mao-Pei Tsui and Mu-Tao Wang for helpful discussions.  This research is supported in part by the Taiwan NSTC grant 112-2628-M-002-004-MY4.
\end{ack*}

\section{Basic Properties of the Set of Singular Values} \label{sec_SVD}

For $j\in\{0,\ldots,n\}$, denote by $\sm_j$ the $j^{\text{th}}$ elementary symmetric polynomial in $n$-variables.  For example, $\sm_0(\ld_1,\ldots,\ld_n) = 1$, $\sm_1(\ld_1,\ldots,\ld_n) = \sum_{i=1}^n\ld_j$, $\sm_2(\ld_1,\ldots,\ld_n) = \sum_{1\leq i<j\leq n}\ld_i\ld_j$, and $\sm_n(\ld_1,\ldots,\ld_n) = \prod_{i=1}^n\ld_i$.

\begin{lem} \label{lem_SVD_3d}
    The set $\CG = \{\bld = (\ld_1,\ld_2,\ld_3)\in\BR^3 : \sm_1(\bld) = \sm_3(\bld) \}$ is a smooth hypersurface in $\BR^3$.  It has the following properties.
    \begin{enumerate}
        \item $\CG$ has three connected components, $\CG_+\cup\CG_0\cup\CG_-$.  The component $\CG_+$ is the intersection of $\CG$ with the first octant, $\CG = \{(\ld_1,\ld_2,\ld_3)\in\CG : \ld_i>0 \text{ for all }i\}$, and $\CG_- = -\CG_+$.
        \item For any $i\neq j$, $\ld_i\ld_j < 1$ on $\CG_0$, and $\ld_i\ld_j > 1$ on $\CG_+\cup\CG_-$.
        \item $\sm_2(\bld) \leq 0$ on $\CG_0$; $\sm_2(\bld) \geq 9$ on $\CG_+\cup\CG_-$.
    \end{enumerate}
\end{lem}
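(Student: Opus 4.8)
The plan is to realize $\CG$ as the image of an explicit parametrization built from the tangent addition formula, which makes both the smoothness and the component structure transparent, and then to reduce the inequalities in (2) and (3) to elementary estimates. First, for smoothness, I would set $F(\bld) = \sm_1(\bld) - \sm_3(\bld)$, so $\CG = F^{-1}(0)$ with $\nabla F = (1-\ld_2\ld_3,\,1-\ld_3\ld_1,\,1-\ld_1\ld_2)$. If $\nabla F(\bld) = 0$ then $\ld_1\ld_2 = \ld_2\ld_3 = \ld_3\ld_1 = 1$, so $(\ld_1\ld_2\ld_3)^2 = 1$ and hence $\ld_1 = \ld_1\ld_2\ld_3/\ld_2\ld_3 = \pm1$, likewise $\ld_2 = \ld_3 = \pm1$ with the same sign, i.e.\ $\bld = \pm(1,1,1)$; but $\sm_1(\pm(1,1,1)) = \pm3 \ne \pm1 = \sm_3(\pm(1,1,1))$, so $\nabla F$ does not vanish on $\CG$ and $\CG$ is a smooth hypersurface.

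For the global structure I would first note that $\ld_i\ld_j \ne 1$ everywhere on $\CG$: if $\ld_2\ld_3 = 1$, then $\sm_1 = \sm_3 = \ld_1\ld_2\ld_3 = \ld_1$ forces $\ld_2 + \ld_3 = 0$, contradicting $\ld_2\ld_3 = 1$, and the other pairs are symmetric. Then, with $D = \{(\ta_1,\ta_2) \in (-\tfrac\pi2,\tfrac\pi2)^2 : \ta_1+\ta_2 \ne \pm\tfrac\pi2\}$, I claim $\psi\colon D \to \BR^3$, $\psi(\ta_1,\ta_2) = (\tan\ta_1,\ \tan\ta_2,\ -\tan(\ta_1+\ta_2))$, is a diffeomorphism onto $\CG$. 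Indeed, putting $\ta_3 \equiv -(\ta_1+\ta_2)\pmod\pi$ one has $\ta_1+\ta_2+\ta_3 \equiv 0 \pmod\pi$, so the identity $\sum\tan\ta_i = \prod\tan\ta_i$ gives $\psi(D)\subseteq\CG$; $\psi$ is injective since $\tan$ is injective on $(-\tfrac\pi2,\tfrac\pi2)$; and $\psi$ is onto $\CG$ because for $\bld\in\CG$ one may take $\ta_i = \arctan\ld_i$ ($i=1,2$), and $\ld_1\ld_2 \ne 1$ with $\sm_1 = \sm_3$ forces $\ld_3 = (\ld_1+\ld_2)/(\ld_1\ld_2-1) = -\tan(\ta_1+\ta_2)$, with $\ta_1+\ta_2 \ne \pm\tfrac\pi2$. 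The inverse $\bld\mapsto(\arctan\ld_1,\arctan\ld_2)$ is smooth, so $\psi$ is a diffeomorphism. Deleting the two disjoint segments $\{\ta_1+\ta_2 = \pm\tfrac\pi2\}$ disconnects the open square into exactly three convex pieces $R_+ = \{\ta_1+\ta_2 > \tfrac\pi2\}$, $R_0 = \{|\ta_1+\ta_2| < \tfrac\pi2\}$, $R_- = \{\ta_1+\ta_2 < -\tfrac\pi2\}$, so $\CG$ has exactly the components $\CG_+ := \psi(R_+)$, $\CG_0 := \psi(R_0)$, $\CG_- := \psi(R_-)$. On $R_+$ one has $\ta_1,\ta_2 > 0$ (if $\ta_1 \le 0$ then $\ta_2 > \tfrac\pi2$) and $\ta_3 = \pi - \ta_1 - \ta_2 \in (0,\tfrac\pi2)$, so $\psi(R_+)$ lies in the open first octant; conversely, if all $\ld_i > 0$ then all $\arctan\ld_i \in (0,\tfrac\pi2)$ with $\ta_1+\ta_2 > \tfrac\pi2$ (else $\ta_3 \le 0$), so $(\ta_1,\ta_2)\in R_+$. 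Hence $\CG_+$ is the first-octant part of $\CG$; and $\psi(-\ta_1,-\ta_2) = -\psi(\ta_1,\ta_2)$ together with $R_- = -R_+$ gives $\CG_- = -\CG_+$. This is (1).

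For (2), each function $\bld\mapsto\ld_i\ld_j$ is continuous on $\CG$, never equals $1$, equals $0$ at the origin $\in\CG_0$ and equals $3$ at $(\sqrt3,\sqrt3,\sqrt3)\in\CG_+$; since $\CG_0$ and $\CG_\pm$ are connected and $\ld_i\ld_j$ is invariant under $\bld\mapsto-\bld$, the intermediate value theorem yields $\ld_i\ld_j < 1$ on $\CG_0$ and $\ld_i\ld_j > 1$ on $\CG_+\cup\CG_-$. For (3), I would set $a = \ld_2\ld_3$, $b = \ld_3\ld_1$, $c = \ld_1\ld_2$, so $\sm_2(\bld) = a+b+c$, and whenever $\sm_3 \ne 0$ also $\tfrac1a+\tfrac1b+\tfrac1c = \sm_1/\sm_3 = 1$ and $abc = \sm_3^2 > 0$. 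On $\CG_+$ all of $a,b,c$ are positive, so Cauchy--Schwarz gives $\sm_2 = (a+b+c)(\tfrac1a+\tfrac1b+\tfrac1c) \ge 9$, and on $\CG_- = -\CG_+$ this persists since $\sm_2$ is even. On $\CG_0$, the points with $\sm_3 = 0$ have some $\ld_i = 0$ (and they lie in $\CG_0$, as $\CG_\pm$ avoid the coordinate planes); there, say $\ld_1 = 0$, one gets $\ld_2+\ld_3 = 0$ and $\sm_2 = -\ld_2^2 \le 0$. Otherwise $a,b,c$ are nonzero with $abc > 0$, so an even number are negative, and by (2) each is $< 1$, ruling out all three positive (else $\tfrac1a+\tfrac1b+\tfrac1c > 3$); hence exactly two are negative, say $a,b < 0$ and $c \in (0,1)$, and $\tfrac1c = 1 + \tfrac1{|a|} + \tfrac1{|b|} > \tfrac1{|a|}$ forces $c < |a|$, so $\sm_2 = c - |a| - |b| < -|b| < 0$.

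The step I expect to be the main obstacle is pinning down the component structure: verifying carefully that $\psi$ surjects onto all of $\CG$ (which is exactly where $\ld_i\ld_j \ne 1$ on $\CG$ is used, to keep $\ta_1+\ta_2$ away from $\pm\tfrac\pi2$) and that the three connected pieces of $D$ correspond precisely to $\CG_+$, $\CG_0$, $\CG_-$ with $\CG_+$ the first octant. Once that dictionary is in hand, the regularity of $\CG$ and the inequalities in (2)--(3) are routine.
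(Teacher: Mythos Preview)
Your proof is correct and takes a genuinely different route from the paper's.  The paper realizes $\CG$ as the graph $\ld_3 = (\ld_1+\ld_2)/(\ld_1\ld_2-1)$ over $\{\ld_1\ld_2\neq 1\}\subset\BR^2$, so the three components come from the three pieces the hyperbola $\ld_1\ld_2=1$ cuts out of the plane; it then computes $\sm_2$ directly on $\CG_0$ by substituting this expression for $\ld_3$, and on $\CG_+$ chains the inequalities $\sm_1^2\geq 3\sm_2\geq 9\sm_3^{2/3}=9\sm_1^{2/3}$.  Your tangent parametrization $\psi(\ta_1,\ta_2)=(\tan\ta_1,\tan\ta_2,-\tan(\ta_1+\ta_2))$ makes the component picture more vivid (three convex pieces of a square) and explains conceptually why $\sm_1=\sm_3$ is the relevant equation; your treatment of (iii) via $a=\ld_2\ld_3$, $b=\ld_3\ld_1$, $c=\ld_1\ld_2$ with $\tfrac1a+\tfrac1b+\tfrac1c=1$ is slicker, especially the Cauchy--Schwarz step $(a+b+c)\cdot 1\geq 9$ on $\CG_+$.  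The trade-off is that the paper's graph-over-$\BR^2$ viewpoint is what generalizes directly to the four-variable case $\sm_1=\sm_3$ in $\BR^4$ treated next, where no tangent identity is available, whereas your parametrization is tailored to the three-variable tangent addition formula.
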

\begin{proof}
    Rewrite the equation as $\ld_1 + \ld_2 = (\ld_1\ld_2 - 1)\ld_3$.  If $\ld_1\ld_2 = 1$, $\ld_1+\ld_2 = 0$.  It follows that $\ld_1\ld_2\leq0$, which contradicts to $\ld_1\ld_2 = 1$.  Hence, $\ld_1\ld_2 \neq 1$.

    As a consequence,
    \begin{align*}
        \CG &= \{ (\ld_1,\ld_2,\frac{\ld_1+\ld_2}{\ld_1\ld_2-1})\in\BR^3 : \ld_1\ld_2 \neq 1 \} ~.
    \end{align*}
    It is not hard to see that when $\ld_1\ld_2<1$, $\ld_1,\ld_2,\ld_3$ cannot be of the same sign.  This finishes the proof of assertion (i) and (ii).  Note that on $\CG_0$, $\ld_i\ld_j<1$ for any $i\neq j$.

    For the first part of (iii),
    \begin{align*}
        \ld_1\ld_2 + (\ld_1+\ld_2)\frac{\ld_1+\ld_2}{\ld_1\ld_2-1} = \frac{1}{\ld_1\ld_2 - 1} \left( \ld_1^2\ld_2^2 + \oh\ld_1^2 + \oh\ld_2^2 + \oh(\ld_1+\ld_2)^2 \right) ~.
    \end{align*}
    Since $\ld_1\ld_2 < 1$ on $\CG_0$, $\ld_1\ld_2 + \ld_2\ld_3 + \ld_3\ld_1 \leq 0$ on $\CG_0$.
    For the second part of (iii), it suffices to do it on $\CG_+$.  By completing the square and the AM-GM inequality,
    \begin{align*}
        (\ld_1 + \ld_2 +\ld_3)^2 &\geq 3(\ld_1\ld_2 + \ld_2\ld_3 + \ld_3\ld_1)\\
        &\geq 9(\ld_1\ld_2\ld_3)^{\frac{2}{3}} = 9(\ld_1 + \ld_2 + \ld_3)^{\frac{2}{3}} ~.
    \end{align*}
    It follows that $\ld_1 + \ld_2 + \ld_3 \geq 3\sqrt{3}$, and the assertion follows.  Note that the equality is achieved at $\ld_1 = \ld_2 = \ld_3 = \sqrt{3}$.
\end{proof}

We will need the following properties later, which does not require $\sm_1(\bld) = \sm_3(\bld)$, and whose proof is simply by completing the square.
\begin{lem} \label{lem_sms_3D}
    For $\bld = (\ld_1,\ld_2,\ld_3)\in\BR^3$, \textnormal{(i)}~ $(\sm_1(\bld))^2 \geq 3\sm_2(\bld)$, and \textnormal{(ii)}~ $(\sm_2(\bld))^2 \geq 3\sm_1(\bld)\sm_3(\bld)$.
\end{lem}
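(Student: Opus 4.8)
The plan is to prove both inequalities by reducing each to a sum of squares.

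For part (i), I would expand directly: $(\sm_1(\bld))^2 = \ld_1^2+\ld_2^2+\ld_3^2 + 2\sm_2(\bld)$, so that
\begin{align*}
    (\sm_1(\bld))^2 - 3\sm_2(\bld) &= \ld_1^2+\ld_2^2+\ld_3^2 - \ld_1\ld_2 - \ld_2\ld_3 - \ld_3\ld_1 \\
    &= \tfrac{1}{2}\left( (\ld_1-\ld_2)^2 + (\ld_2-\ld_3)^2 + (\ld_3-\ld_1)^2 \right) \geq 0 ~,
\end{align*}
which gives (i) with equality exactly when $\ld_1=\ld_2=\ld_3$.

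For part (ii), the key observation is that $\sm_2$ and $\sm_1\sm_3$ are themselves $\sm_1$ and $\sm_2$ of the triple of pairwise products. Set $\mu_1 = \ld_2\ld_3$, $\mu_2 = \ld_3\ld_1$, $\mu_3 = \ld_1\ld_2$. Then $\sm_1(\mu_1,\mu_2,\mu_3) = \sm_2(\bld)$, and a short computation shows $\sm_2(\mu_1,\mu_2,\mu_3) = \mu_1\mu_2+\mu_2\mu_3+\mu_3\mu_1 = \ld_1\ld_2\ld_3(\ld_1+\ld_2+\ld_3) = \sm_1(\bld)\sm_3(\bld)$. Applying (i) to $(\mu_1,\mu_2,\mu_3)$ yields $(\sm_2(\bld))^2 \geq 3\sm_1(\bld)\sm_3(\bld)$, proving (ii).

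There is no real obstacle here; the only mildly clever point is recognizing in (ii) that one should substitute the pairwise products and invoke (i) again rather than expanding $(\sm_2(\bld))^2 - 3\sm_1(\bld)\sm_3(\bld)$ by brute force (though that expansion also works and produces $\tfrac12\sum (\ld_i\ld_j - \ld_j\ld_k)^2$ after regrouping). I would present (i) with the explicit sum-of-squares identity and then derive (ii) as a one-line corollary of (i).
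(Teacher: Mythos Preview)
Your proof is correct and aligns with the paper's approach: the paper simply states that the lemma follows ``by completing the square'' without writing out any details, and your explicit sum-of-squares identity for (i) is exactly that. Your reduction of (ii) to (i) via the substitution $\mu_i = \ld_j\ld_k$ is a clean way to package the same completing-the-square computation the paper alludes to (and, as you note, is equivalent to the direct identity $\tfrac{1}{2}\sum(\ld_i\ld_j-\ld_j\ld_k)^2$).
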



The following lemma is the four-dimensional version of Lemma \ref{lem_SVD_3d}.
\begin{lem} \label{lem_SVD_4d}
    The set $\CS = \{\bld = (\ld_0,\ld_1,\ld_2,\ld_3)\in\BR^4 : \sm_1(\bld) = \sm_3(\bld) \}$ is a smooth hypersurface in $\BR^4$.  It has the following properties.
    \begin{enumerate}
        \item $\sm_2(\bld|i) \neq 1$ for all $i$. The notation means $\sigma_2$ on $\bld$ with $\ld_i$ removed.  For example, $\sm_2(\bld|1) = \ld_0\ld_2 + \ld_2\ld_3 + \ld_3\ld_0$.
        \item $\CS$ has three connected components, $\CS_+\cup\CS_0\cup\CS_-$, where $\CS_0 = \{\bld\in\CS : \sm_2(\bld|i) < 1 \text{ for all } i \}$, $\CS_+ = \{\bld\in\CS : \sm_2(\bld|i) > 1 \text{ for all } i \text{ and } \sm_1(\bld)\geq 4\}$ and $\CS_- = -\CS_+$.
        \item $\sm_2 \leq 0$ on $\CS_0$; $\sm_2 \geq 6$ on $\CS_+\cup\CS_-$.
    \end{enumerate}
\end{lem}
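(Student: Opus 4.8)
The plan is to follow the proof of Lemma~\ref{lem_SVD_3d}, now treating $\ld_0$ as the dependent coordinate. Splitting $\sm_1(\bld) = \ld_0 + \sm_1(\bld|0)$ and $\sm_3(\bld) = \ld_0\,\sm_2(\bld|0) + \sm_3(\bld|0)$, the defining equation $\sm_1(\bld) = \sm_3(\bld)$ reads
\begin{align*}
  \ld_0\bigl(1 - \sm_2(\bld|0)\bigr) = \sm_3(\bld|0) - \sm_1(\bld|0)~.
\end{align*}
First I would establish (i): if $\sm_2(\bld|0) = 1$ for some $\bld\in\CS$, the displayed identity forces $\sm_1(\bld|0) = \sm_3(\bld|0)$, i.e.\ $(\ld_1,\ld_2,\ld_3)\in\CG$; but then Lemma~\ref{lem_SVD_3d}(iii) gives $\sm_2(\bld|0) = \sm_2(\ld_1,\ld_2,\ld_3)\leq 0$ or $\geq 9$, never $1$ --- a contradiction. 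By symmetry $\sm_2(\bld|i)\neq 1$ for every $i$. Smoothness of $\CS$ is then immediate, since $\CS = F^{-1}(0)$ with $F = \sm_1 - \sm_3$ and $\pl F/\pl\ld_i = 1 - \sm_2(\bld|i)$ never vanishes on $\CS$.

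Using (i), $\CS$ is the graph over $U := \{(\ld_1,\ld_2,\ld_3)\in\BR^3 : \sm_2(\ld_1,\ld_2,\ld_3)\neq 1\}$ of the rational function $\ld_0 = \frac{\sm_3(\bld|0) - \sm_1(\bld|0)}{1 - \sm_2(\bld|0)}$, hence is homeomorphic to $U$. The quadric $\{\sm_2(\ld_1,\ld_2,\ld_3) = 1\}$ is a hyperboloid of two sheets (the form $\sm_2$ has signature $(1,2)$, with positive eigendirection $(1,1,1)$), and by Lemma~\ref{lem_sms_3D}(i) one has $(\ld_1+\ld_2+\ld_3)^2\geq 3\sm_2 > 3$ on $\{\sm_2 > 1\}$, so $\ld_1+\ld_2+\ld_3$ has a fixed sign on each component there. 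Thus $U$ has the three components $D_0 = \{\sm_2 < 1\}$ and $D_\pm = \{\sm_2 > 1,\ \pm(\ld_1+\ld_2+\ld_3) > 0\}$, and $\CS$ has the three corresponding components $\CS_0,\CS_+,\CS_-$. The graph over $D_0$ contains $\bld = 0$ and the graph over $D_+$ contains $\bld = (1,1,1,1)$; since every $\bld\mapsto\sm_2(\bld|i)$ is continuous on $\CS$ and, by (i), never takes the value $1$, it stays $<1$ on the connected set $\CS_0$ (where it equals $0$ at $\bld = 0$) and $>1$ on $\CS_+$ (where it equals $3$ at $(1,1,1,1)$), for every $i$. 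This already yields $\CS_0\subseteq\{\sm_2(\bld|i) < 1\ \forall i\}$ and $\CS_+\subseteq\{\sm_2(\bld|i) > 1\ \forall i\}$. On $\CS_+$, writing $e_1,e_2,e_3$ for the elementary symmetric functions of $(\ld_1,\ld_2,\ld_3)$ (so $e_2 > 1$ and $e_1 > 0$ there), the parametrization gives $\sm_1(\bld) = (e_1e_2 - e_3)/(e_2 - 1)$, and Lemma~\ref{lem_sms_3D} reduces $\sm_1(\bld)\geq 4$ to an elementary estimate in the single variable $e_2 > 1$; continuity and $\sm_1 = 4$ at $(1,1,1,1)$ then give $\sm_1\geq 4$ on all of $\CS_+$. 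Since $\CS_- = -\CS_+$ and the three sets on the right-hand sides in (ii) are pairwise disjoint, these inclusions are equalities, which proves (ii).

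For (iii), combining $\ld_0(1 - e_2) = e_3 - e_1$ with $\sm_2(\bld) = e_2 + \ld_0 e_1$ gives, on $\CS$,
\begin{align*}
  \sm_2(\bld)\,(1 - e_2) = e_2(1-e_2) + (e_3 - e_1)e_1 = -\bigl(e_1^2 + e_2^2 - e_2 - e_1e_3\bigr)~,
\end{align*}
and a short expansion in the $\ld_i$ gives the square-completed identity
\begin{align*}
  e_1^2 + e_2^2 - e_2 - e_1e_3 = \Bigl(\textstyle\sum_i\ld_i^2 + e_2\Bigr) + \oh e_2^2 + \oh\textstyle\sum_{i<j}\ld_i^2\ld_j^2 \geq 0~,
\end{align*}
since $\sum_i\ld_i^2 + e_2\geq 0$ (from $|\ld_i\ld_j|\leq\oh(\ld_i^2+\ld_j^2)$). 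On $\CS_0$, $e_2 = \sm_2(\bld|0) < 1$, so the first identity gives $\sm_2(\bld)\leq 0$. On $\CS_+$, $e_2 > 1$, so $\sm_2(\bld) = (e_1^2 + e_2^2 - e_2 - e_1e_3)/(e_2-1)$; Lemma~\ref{lem_sms_3D} bounds the numerator below by $2e_2 + \frac{2}{3}e_2^2$, and minimizing $(2e_2 + \frac{2}{3}e_2^2)/(e_2-1)$ over $e_2 > 1$ yields $\sm_2(\bld)\geq 6$, attained at $e_2 = 3$. Since $\sm_2$ is even and $\CS_- = -\CS_+$, also $\sm_2\geq 6$ on $\CS_-$. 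Both bounds are sharp, at $\bld = 0$ and $\bld = \pm(1,1,1,1)$ respectively.

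The main obstacle is the identification carried out in (ii): each component of $\CS$ is \emph{a priori} only cut out by conditions on $\sm_2(\bld|0)$ and $\ld_1+\ld_2+\ld_3$, and it must be promoted to the index-symmetric description of the lemma (together with the extra inequality $\sm_1\geq 4$ on $\CS_+$); the interplay of connectedness, continuity, property (i), and the explicit parametrization is precisely what makes this work. The other delicate point is extracting the sharp constants $0$ and $6$ in (iii), which relies on the square-completed identity above and the elementary estimates from Lemma~\ref{lem_sms_3D}. The remainder of the argument runs in parallel with Lemma~\ref{lem_SVD_3d}.
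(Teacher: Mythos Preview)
Your proposal is correct and follows essentially the same route as the paper: derive (i) by the same contradiction with Lemma~\ref{lem_SVD_3d}(iii), parametrize $\CS$ as a graph over $\{\td\sm_2\neq1\}$ to get three components, promote the one-index condition to the index-symmetric description by continuity of $\sm_2(\bld|i)-1$, and extract the constants in (iii) via Lemma~\ref{lem_sms_3D}. The only noteworthy local differences are cosmetic: your sum-of-squares identity $e_1^2+e_2^2-e_2-e_1e_3=(\sum\ld_i^2+e_2)+\tfrac12 e_2^2+\tfrac12\sum\ld_i^2\ld_j^2$ gives $\sm_2\le0$ on $\CS_0$ without invoking Lemma~\ref{lem_sms_3D}, whereas the paper uses that lemma throughout; and your one sentence on $\sm_1\ge4$ (``continuity and $\sm_1=4$ at $(1,1,1,1)$'') is phrased loosely---the actual content is the single-variable estimate in $e_2$ the paper writes out explicitly, not a continuity argument.
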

\begin{proof}
    The equation can be rewritten as $\sm_1(\bld|0) - \sm_3(\bld|0) = (\sm_2(\bld|0) - 1)\ld_0$.  Suppose that $\sm_2(\bld|0) = 1$.  One must have $\sm_1(\bld|0) - \sm_3(\bld|0) = 0$.  By Lemma \ref{lem_SVD_3d} (iii), $\sm_2(\bld|0)$ can never be $1$, which is a contradiction.

    To avoid confusion, denote $\sm_i(\bld|0)$ by $\td{\sm}_i(\ld_1,\ld_2,\ld_3)$.  It follows that $\CS$ is graphical over $\{0\}\times\BR^3$:
    \begin{align*}
        \CS &= \{ (\frac{(\td{\sm}_1-\td{\sm}_3)(\ld_1,\ld_2,\ld_3)}{(\td{\sm}_2-1)(\ld_1,\ld_2,\ld_3)},\ld_1,\ld_2,\ld_3)\in\BR^4 : \sm_2(\ld_1,\ld_2,\ld_3) \neq 1 \} ~.
    \end{align*}
    It is not hard to see that the domain, $\{(\ld_1,\ld_2,\ld_3)\in\BR^3:\td{\sm}_2(\ld_1,\ld_2,\ld_3) \neq 1\}$ has three connected components: $\pi(\CS_+)\ni(1,1,1)$, $\pi(\CS_0)\ni(0,0,0)$ and $\pi(\CS_-)=-\pi(\CS_+)\ni(-1,-1,-1)$, where $\pi$ is the orthogonal projection onto $\{0\}\times\BR^3$.

    Since $\sm_2(\bld|i) - 1$ is nonzero on $\CS$, it cannot change sign on $\CS_0$.  Note that it is negative at $\bld=(0,0,0,0)$, and thus $\sm_2(\bld|i) < 1$ on $\CS_0$ for $i=1,2,3$.  The same argument implies that $\sm_2(\bld|i) > 1$ on $\CS_+\cup\CS_-$ for $i=1,2,3$.
    
    For $\bld\in\CS_+$, it follows from Lemma \ref{lem_sms_3D} (i) that $\td{\sm}_1\geq\sqrt{3\td{\sm}_2} > \sqrt{3}$.  According to Lemma \ref{lem_sms_3D} (ii), $\td{\sm}_3 \leq {(\td{\sm}_2)^2}/{(3\td{\sm}_1)} \leq (\td{\sm}_2/\sqrt{3})^{\frac{3}{2}}$.  Hence, for $\bld\in\CS_+$,
    \begin{align*}
        \sm_1 - 4 &= \frac{\td{\sm}_1-\td{\sm}_3}{\td{\sm}_2-1} + \td{\sm}_1 - 4 \\
        &= \frac{1}{\td{\sm}_2-1} \left[ -\td{\sm}_3 + \td{\sm}_1\td{\sm}_2 - 4\td{\sm}_2 + 4 \right] \\
        &\geq \frac{1}{\td{\sm}_2-1} \left[ -\frac{1}{3\sqrt{3}}(\td{\sm}_2)^{\frac{3}{2}} + \sqrt{3}(\td{\sm}_2)^{\frac{3}{2}} - 4\td{\sm}_2 + 4 \right] \\
        &= \frac{1}{\td{\sm}_2-1} \left( \frac{8}{3\sqrt{3}}\sqrt{\td{\sm}_2} + \frac{4}{3} \right) \left( \sqrt{\td{\sm}_2} - \sqrt{3} \right)^2 \geq 0 ~. 
    \end{align*}

    It remains to prove assertion (iii).  When $\bld\in\CS_0$, $\td{\sm}_2<1$, and
    \begin{align*}
        \sm_2 &= \frac{\td{\sm}_1-\td{\sm}_3}{\td{\sm}_2-1}\td{\sm}_1+ \td{\sm}_2 \\
        &= \frac{1}{\td{\sm}_2-1} \left[ ((\td{\sm}_1)^2-\td{\sm}_2) + ((\td{\sm}_2)^2-\td{\sm}_1\td{\sm}_3) \right] ~.
    \end{align*}
    Due to Lemma \ref{lem_sms_3D}, the numerator is no less than $\frac{2}{3}[(\td{\sm}_1)^2+(\td{\sm}_2)^2]$, and thus $\sm_2 \leq 0$.  When $\bld\in\CS_+\cup\CS_-$, again by Lemma \ref{lem_sms_3D},
    \begin{align*}
        \sm_2 - 6 &= \frac{1}{\td{\sm}_2-1} \left[ (\td{\sm}_1)^2 - \td{\sm}_1\td{\sm}_3 + (\td{\sm}_2-1)(\td{\sm}_2-6) \right] \\
        &\geq \frac{1}{\td{\sm}_2-1} \left[ 3\td{\sm}_2 - \frac{1}{3}(\td{\sm}_2)^2 + (\td{\sm}_2)^2 - 7\td{\sm}_2 + 6 \right] \\
        &= \frac{1}{\td{\sm}_2-1}\,\frac{2}{3}(\td{\sm}_2-3)^2 \geq 0 ~.
    \end{align*}
    This finishes the proof of the lemma.
\end{proof}

\section{Some Linear Algebra about the $G_2$ Geometry of $\BR^7$} \label{sec_LA7}
We follow \cite{K05} and \cite{SW17} closely in our discussion of $G_2$ and $\spin(7)$ geometry.  Readers can find more detailed information and additional references there.

Let $(x^0,x^1,x^2,x^3,y^1,y^2,y^3)$ be the coordinate for $\BR^7$.  For $i = 1,2,3$, denote by $\om^i$ the $2$-form $\dd x^0\w\dd x^i + \dd x^j\w\dd x^k$ where $(i,j,k)$ is a cyclic permutation of $(1,2,3)$.  Let
\begin{align} \label{g2_form}
    \vph &= \dd y^{123} - \dd y^1\w\om^1 - \dd y^2\w\om^2 - \dd y^{3}\w\om^3 ~,
\end{align}
where $\dd y^{123}$ denotes $\dd y^1\w\dd y^2\w\dd y^3$.  With respect to the standard metric $\sum_{i=0}^3(\dd x^i)^2 + \sum_{j=1}^3(\dd y^j)^2$ and the volume form $\dd x^{0123}\w\dd y^{123}$, the Hodge star of $\vph$ is
\begin{align} \label{g2_star}
    \psi &= \dd x^{0123} - \dd y^{23}\w\om^1 - \dd y^{31}\w\om^2 - \dd y^{12}\w\om^3 ~.
\end{align}
The exceptional Lie group $G_2$ is the automorphism group of $\vph$:
\begin{align*}
    G_2 &= \{ g\in\RGL(7;\BR) : g^*(\vph) = \vph \} ~.
\end{align*}
Since $\vph$ determines the metric and orientation, $G_2$ is contained in $\RSO(7)$.

Recall that the standard action of $\RSO(4)$ on $\BR^4$ naturally induces an action on $\Ld^4_+\BR^4$.
Considering $\BR^7$ as $\Ld^2_+\BR^4$ (where $\BR^4 = \{ y^1 = y^2 = y^3 = 0\}$), one finds $\RSO(4) < G_2$.  Specifically, for $g\in\RSO(4)$, $g^*(\om^i) = \sum_{j=1}^3 A^i_j\,\om^j$ where $[A]\in\RSO(3)$.  It acts on the $y$-coordinates by $g^*(\dd y^i) = \sum_{j=1}^3 A^i_j\,\dd y^j$.  One can see from \eqref{g2_form} that $g$ leaves $\vph$ invariant.  For $\RSO(3) < \RSO(4)$ defined to be the subgroup that fixes the $x^0$ coordinate, the corresponding action on $\BR^7$ is given by the standard action on the $(x^1,x^2,x^3)$-summand and the $(y^1,y^2,y^3)$-summand.  We fix this choice of subgroups $\RSO(3) < \RSO(4) < G_2$ in this paper.

\subsection{Products and $G_2$-basis}

The metric dual of $\vph$ defines a \emph{vector cross product} by
\begin{align*}
    \ip{u\times v}{w} &= \vph(u,v,w)
\end{align*}
for any $u,v,w\in\BR^7$.  For example, $\frac{\pl}{\pl x^0}\times\frac{\pl}{\pl x^1} = -\frac{\pl}{\pl y^1}$.  An ordered, orthonormal basis $\{e_0,\ldots,e_6\}$ is called a \emph{$G_2$-basis} if their vector cross products satisfy the same relation as that of the above coordinate basis.  Equivalently, if $\{\ta^i\}_{i=0}^6$ is the dual basis of $\{e_i\}_{i=0}^6$, then
\begin{align*}
    \vph &= \ta^4\w\ta^5\w\ta^6 - \ta^4\w(\ta^0\w\ta^1 + \ta^2\w\ta^3) \\
    &\quad - \ta^5\w(\ta^0\w\ta^2 + \ta^3\w\ta^1) - \ta^6\w(\ta^0\w\ta^3 + \ta^1\w\ta^2) ~.
\end{align*}

Similarly, the metric dual of $\psi$ defines the \emph{associator} by
\begin{align*}
    \ip{z}{[u,v,w]} &= 2\psi(z,u,v,w)
\end{align*}
for any $x,u,v,w\in\BR^7$.  For instance, $[\frac{\pl}{\pl x^1},\frac{\pl}{\pl x^2},\frac{\pl}{\pl x^3}] = 2\frac{\pl}{\pl x^0}$ and $[\frac{\pl}{\pl y^1},\frac{\pl}{\pl y^2},\frac{\pl}{\pl y^3}] = 0$.

\subsection{Coassociative Subspaces}

An oriented $4$-dimensional vector subspace $P\subset\BR^7$ is said to be \emph{coassociative} if it is \emph{calibrated} by $\psi$.  Namely, $\psi(P) = \pm1$.  According to \cite{HL82}*{Corollary 1.20 on p.117}, $P$ is coassociative if and only if $\vph|_P = 0$.

\begin{lem} \label{lem_coass_SVD}
    Suppose that $P\subset\BR^7$ is coassociative, and is graphical over $\BR^4\times\{0\}$.  Then, there exists an $\RSO(4) < G_2$ change of coordinate, after which $P$ is spanned by
    \begin{align*}
        \frac{\pl}{\pl x^0} \quad\text{and}\quad \frac{\pl}{\pl x^i} + \ld_i\frac{\pl}{\pl y^i} \quad\text{for $i=1,2,3$} ~,
    \end{align*}
    for some $\ld_i\in\BR$ satisfying $\ld_1 + \ld_2 + \ld_3 = \ld_1\ld_2\ld_3$.
\end{lem}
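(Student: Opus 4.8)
The plan is to use the $\RSO(4)$-freedom in two stages, first to normalize $\frac{\pl}{\pl x^0}$ into the plane and then to diagonalize what remains; the coassociative condition $\vph|_P = 0$ will then produce both the symmetry and the identity $\sm_1 = \sm_3$ almost for free.

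Since $P$ is graphical over $\BR^4\times\{0\}$, it has a unique adapted basis $v_a = \frac{\pl}{\pl x^a} + \sum_{i=1}^{3}A^i_a\frac{\pl}{\pl y^i}$, $a = 0,1,2,3$, for a real $3\times 4$ matrix $(A^i_a)$. The key observation is that the associated linear map $\BR^4\to\BR^3$ cannot be injective, so $\ker(A^i_a)\neq 0$; pick a unit vector $w_0$ in it, so that $(w_0,0)\in P$. The chosen $\RSO(4) < G_2$ restricts to the standard $\RSO(4)$ on $\BR^4\times\{0\}$, which is transitive on the unit sphere, so I may apply the element of $\RSO(4) < G_2$ taking $w_0$ to $\frac{\pl}{\pl x^0}$; since it preserves the splitting $\BR^4_x\op\BR^3_y$, the image plane is again graphical and now contains $\frac{\pl}{\pl x^0}$. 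Comparing with the adapted basis forces $v_0 = \frac{\pl}{\pl x^0}$ (the $0$-th column of the new matrix vanishes), so after this change of coordinates $P = \spn\{\frac{\pl}{\pl x^0}\}\op\spn\{v_1,v_2,v_3\}$ with $v_a = \frac{\pl}{\pl x^a} + \sum_{i=1}^{3}A^i_a\frac{\pl}{\pl y^i}$ for $a=1,2,3$; set $B = (A^i_a)_{1\le i,a\le 3}$.

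Now I impose $\vph|_P = 0$, which amounts to four scalar equations. Contracting \eqref{g2_form} with $\frac{\pl}{\pl x^0}$ gives $\iota_{\pl/\pl x^0}\vph = \sum_{i=1}^{3}\dd y^i\w\dd x^i$, the nondegenerate standard symplectic form on the $(x^1,x^2,x^3,y^1,y^2,y^3)$-subspace; hence the three equations $\vph(\frac{\pl}{\pl x^0}, v_a, v_b) = 0$ say exactly that $\spn\{v_1,v_2,v_3\}$ is isotropic there, equivalently $B = B^T$. The remaining equation $\vph(v_1,v_2,v_3) = 0$ is a short computation using that each $\om^i$ restricts on $\spn\{\frac{\pl}{\pl x^1},\frac{\pl}{\pl x^2},\frac{\pl}{\pl x^3}\}$ to $\dd x^j\w\dd x^k$; one finds $\vph(v_1,v_2,v_3) = \det B - \tr B$, so $\det B = \tr B$. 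Finally, $\RSO(3) < \RSO(4) < G_2$ fixes $\frac{\pl}{\pl x^0}$ and acts on $B$ by $B\mapsto RBR^T$; choosing $R$ to diagonalize the symmetric $B$ turns the $v_a$ into $\frac{\pl}{\pl x^i} + \ld_i\frac{\pl}{\pl y^i}$, and since $\tr$ and $\det$ are conjugation-invariant the identity $\det B = \tr B$ becomes $\ld_1 + \ld_2 + \ld_3 = \ld_1\ld_2\ld_3$.

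The only non-mechanical step is the opening one: recognizing that $\ker(A^i_a)\ne 0$ for purely dimensional reasons is what lets us normalize $\frac{\pl}{\pl x^0}\in P$ using only $\RSO(4) < G_2$ — we do not have the full orthogonal freedom of an unconstrained singular value decomposition. Everything afterward is routine: contracting $\vph$, reading off the isotropy (Lagrangian) condition, and diagonalizing a symmetric $3\times 3$ matrix. The one point requiring care is the sign bookkeeping in $\vph(v_1,v_2,v_3)$, so that the outcome is $\sm_1 = \sm_3$ rather than $\sm_1 = -\sm_3$; this is also consistent with the fact that, after the reduction, $P$ is the orthogonal sum of a line with the graph of a special Lagrangian gradient in $\BC^3$.
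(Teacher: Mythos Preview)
Your proof is correct and follows essentially the same route as the paper: normalize so that $\frac{\pl}{\pl x^0}\in P$, read off from $\vph|_P=0$ that the residual $3\times3$ matrix $B$ is symmetric with $\tr B=\det B$, and then diagonalize $B$ by the diagonal $\RSO(3)<\RSO(4)<G_2$.

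The only difference is in the first step. You use the rank--nullity observation $\ker A\neq0$ to place $\frac{\pl}{\pl x^0}$ in $P$, whereas the paper applies the domain side of the singular value decomposition of $A:\BR^4\to\BR^3$; since $\RSO(4)<G_2$ restricts to the standard $\RSO(4)$ on $\BR^4\times\{0\}$ (with an induced $\RSO(3)$ on $\{0\}\times\BR^3$), this is legitimate and yields, in addition to $\frac{\pl}{\pl x^0}\in P$, that the columns of $B$ are mutually orthogonal. That extra orthogonality is never used once $B$ is shown to be symmetric, so your kernel shortcut is a mild simplification; conversely, your worry that ``we do not have the full orthogonal freedom of an unconstrained SVD'' is a bit overstated, since only the domain side is needed and that freedom is available within $\RSO(4)<G_2$. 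Your symplectic reading of $\iota_{\pl/\pl x^0}\vph=\sum_i\dd y^i\w\dd x^i$ and the computation $\vph(v_1,v_2,v_3)=\det B-\tr B$ are both correct.
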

\begin{proof}
    Since $P$ is given by the graph of a linear map from $\BR^4\times\{0\}$ to $\{0\}\times\BR^3$, one can apply the singular value decomposition to find an orthogonal basis for $P$.  Recall that the singular value decomposition produces an $\RSO(4)$ change of coordinate for the domain, $\BR^4\times\{0\}$.  By the $\RSO(4) < G_2$ action on $\BR^7$, $P$ has the orthogonal basis
    \begin{align*}
        \frac{\pl}{\pl x^0} \quad\text{and}\quad \frac{\pl}{\pl x^i} + \sum_{j=1}^3 B_i^j\frac{\pl}{\pl y^j} \quad\text{for $i=1,2,3$} ~.
    \end{align*}
    In particular, $\sum_{k=1}^3 B^k_i B^k_j = 0$ for any $i\neq j$.

    Since $P$ is coassociative, $\vph|_P = 0$.  It implies that $B_i^j = B_j^i$ for any $i\neq j$, and $\tr(B) = \det(B)$.  Since $B$ is symmetric, it can be diagonalized by an $\RSO(3)$ matrix.  By the $\RSO(3) < G_2$ action on $\BR^7$, the orthogonal basis of $P$ becomes
    \begin{align*}
        \frac{\pl}{\pl x^0} \quad\text{and}\quad \frac{\pl}{\pl x^i} +\ld_i\frac{\pl}{\pl y^i} \quad\text{for $i=1,2,3$} ~.
    \end{align*}
    We infer from $\tr(B) = \det(B)$ that $\ld_1 + \ld_2 + \ld_3 = \ld_1\ld_2\ld_3$.
\end{proof}

\begin{rmk} \label{rmk_coass_SVD}
    The correspondence given by Lemma \ref{lem_coass_SVD}, $P\mapsto(\ld_1,\ld_2,\ld_3)$, has ambiguities.  In addition to relabeling $\ld_i$' s, the negative identity matrix $-\bfI_4\in\RSO(4) < G_2$ flips the sign of all $\ld_i$'s.  Therefore, it induces an equivalence between $\CG_+$ and $\CG_-$ described by Lemma \ref{lem_sms_3D}
    
    Since Lemma \ref{lem_coass_SVD} is only for graphical ones, it does not mean that the Grassmannian of coassociative $4$-planes is disconnected.  For example, $P_t$ spanned by $\frac{\pl}{\pl x^0}$, $\cos t\frac{\pl}{\pl x^1} + \sin t\frac{\pl}{\pl y^1}$, $\cos t\frac{\pl}{\pl x^2} + \sin t\frac{\pl}{\pl y^2}$, and $\cos(2t)\frac{\pl}{\pl x^3} - \sin(2t)\frac{\pl}{\pl y^3}$ is coassociative for all $t$.  It connects different components by passing through non-graphical coassociatives.
\end{rmk}

The components $\CG_0$ and $\CG_+\cup\CG_-$ defined in Lemma \ref{lem_sms_3D} correspond to different orientation in the following sense.
\begin{cor} \label{cor_coass_SVD}
    For any $(\ld_1,\ld_2,\ld_3)\in\CG$, let
    \begin{align} \label{dt_sign}
        \dt &= \begin{cases}
            1 &\text{if } (\ld_1,\ld_2,\ld_3)\in\CG_0 ~, \\
            -1 &\text{if } (\ld_1,\ld_2,\ld_3)\in\CG_+\cup\CG_- ~, \\
        \end{cases}
    \end{align}
    and
    \begin{align*}
        e_0 &= \dt\frac{\pl}{\pl x^0} ~,
        & e_i &= \frac{1}{\sqrt{1+\ld_i^2}}\left(\frac{\pl}{\pl x^i} + \ld_i\frac{\pl}{\pl y^i}\right) ~,
        & e_{3+i} &= \frac{\dt}{\sqrt{1+\ld_i^2}}\left(\frac{\pl}{\pl y^i} - \ld_i\frac{\pl}{\pl x^i}\right)
    \end{align*}
    for $i=1,2,3$.  Then, $\psi(e_0,e_1,e_2,e_3) = 1$, and $\{e_i\}_{i=0}^6$ is a $G_2$-basis.  To phrase it formally, the graphical orientation coincides with the orientation determined by $\psi$ if and only if $1-\ld_i\ld_j>0$.
\end{cor}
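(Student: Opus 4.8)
The plan is to verify the two assertions of Corollary \ref{cor_coass_SVD} by direct computation, using Lemma \ref{lem_coass_SVD} to put $P$ in the normal form $\spn\{\pl/\pl x^0,\ \pl/\pl x^i+\ld_i\,\pl/\pl y^i\}$ and then exhibiting the claimed frame $\{e_i\}_{i=0}^6$ explicitly. First I would record that the vectors $e_0,\ldots,e_6$ as defined are orthonormal: $e_0 = \dt\,\pl/\pl x^0$ has unit length since $\dt = \pm 1$; the pair $(e_i,e_{3+i})$ for each fixed $i$ is an orthonormal pair in the plane $\spn\{\pl/\pl x^i,\pl/\pl y^i\}$ (it is just a rotation by the angle $\arctan\ld_i$, up to the sign $\dt$); and the three planes for $i=1,2,3$ together with $\spn\{\pl/\pl x^0\}$ are mutually orthogonal, so the whole collection is an orthonormal basis of $\BR^7$. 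Note that $\{e_0,e_1,e_2,e_3\}$ spans $P$ (for $e_i$, $i\geq 1$, this is immediate; for $e_0$ it is $\pm\pl/\pl x^0$), which is why the calibration value $\psi(e_0,e_1,e_2,e_3) = \pm 1$ to begin with.

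The core of the argument is the computation of $\psi(e_0,e_1,e_2,e_3)$ and the comparison of the resulting frame with the $G_2$-basis relation written just before Lemma \ref{lem_coass_SVD}. I would plug the explicit $e_i$ into \eqref{g2_star}. Only the term $\dd x^{0123}$ of $\psi$ contributes, since every other term of $\psi$ contains at least two $\dd y$'s while $e_0,\ldots,e_3$ involve the $y$-directions only through the single coefficient $\ld_i$ in $e_i$, so at most one $\dd y^j$ survives in each wedge slot — hence those terms vanish on $(e_0,e_1,e_2,e_3)$. Evaluating $\dd x^{0123}$ on the frame gives
\begin{align*}
    \psi(e_0,e_1,e_2,e_3) &= \dt\cdot\prod_{i=1}^3 \frac{1}{\sqrt{1+\ld_i^2}} ~,
\end{align*}
since the $\pl/\pl x$-component of $e_0$ is $\dt$ and of $e_i$ is $(1+\ld_i^2)^{-1/2}$. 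This is positive precisely when $\dt = +1$, i.e. (by the definition \eqref{dt_sign}) when $(\ld_1,\ld_2,\ld_3)\in\CG_0$, which by Lemma \ref{lem_SVD_3d}(ii) is exactly the condition $\ld_i\ld_j<1$. However, $\psi(e_0,e_1,e_2,e_3)$ must equal $\pm1$ because $P$ is coassociative and $\{e_0,\ldots,e_3\}$ is an orthonormal basis of $P$; combined with the sign computation this forces $\psi(e_0,e_1,e_2,e_3) = 1$, and incidentally recovers $\prod(1+\ld_i^2) = \dt^2\cdot(\text{stuff})$ consistency, though one should not need that identity — the coassociative calibration bound does the job. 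I would state this cleanly as: $\psi$ restricted to an orthonormal basis of a coassociative plane is $\pm1$, and the sign is the sign of $\dt\prod(1+\ld_i^2)^{-1/2} = \dt$.

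Finally, to see that $\{e_i\}_{i=0}^6$ is an actual $G_2$-basis (not merely that $P$ is $\psi$-calibrated with the right sign), I would substitute all seven $e_i$ into \eqref{g2_form} — or equivalently verify the vector cross product relations — and check it reproduces $\vph$ exactly. Here the relations $\ld_1+\ld_2+\ld_3 = \ld_1\ld_2\ld_3$ from Lemma \ref{lem_coass_SVD} and the constraint $\vph|_P=0$ are what make the cross-terms cancel; expanding $\vph$ on the rotated frame produces coefficients that are rational functions of the $\ld_i$ and $\dt$, and the coassociativity identity is precisely the relation that collapses them to the standard form. I expect the main obstacle to be bookkeeping: the expansion of $\vph$ in \eqref{g2_form} on the tilted basis involves many terms of the form $\dd y^i\w\om^i$, each $\om^i$ a sum of two $2$-forms, and one must track the factors $(1+\ld_i^2)^{-1/2}$ and the signs $\dt$ carefully. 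A cleaner route that avoids the full expansion: since $e_0,\ldots,e_3$ span a coassociative plane with $\psi(e_0,\ldots,e_3)=1$ and $e_4,\ldots,e_6$ span its orthogonal complement with the induced orientation, and since $G_2$ acts transitively on such ``coassociative splittings'' with the correct orientations, there is $g\in G_2$ carrying the standard frame to $\{e_i\}$; hence $\{e_i\}$ is a $G_2$-basis. I would present the direct verification as the primary argument and mention the transitivity argument as a remark, since the direct computation also yields the explicit sign statement ``$\psi$-orientation $=$ graphical orientation iff $1-\ld_i\ld_j>0$'' that the corollary asserts.
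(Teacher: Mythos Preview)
Your computation of $\psi(e_0,e_1,e_2,e_3)$ contains a genuine error. You assert that only the term $\dd x^{0123}$ of $\psi$ contributes because ``every other term of $\psi$ contains at least two $\dd y$'s while $e_0,\ldots,e_3$ involve the $y$-directions only through the single coefficient $\ld_i$ in $e_i$''. But this reasoning is wrong: the term $-\dd y^{23}\w\om^1$ in \eqref{g2_star} contains the summand $-\dd y^2\w\dd y^3\w\dd x^0\w\dd x^1$, and this does \emph{not} vanish on $(e_0,e_1,e_2,e_3)$, since $e_2$ has a $\pl/\pl y^2$-component, $e_3$ has a $\pl/\pl y^3$-component, and $e_0,e_1$ supply the $\dd x^0,\dd x^1$ factors. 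The same applies to the two cyclic terms. The correct evaluation, as in the paper's proof, is
\[
\psi\Bigl(\tfrac{\pl}{\pl x^0},e_1,e_2,e_3\Bigr) \;=\; \frac{1 - \ld_1\ld_2 - \ld_2\ld_3 - \ld_3\ld_1}{\sqrt{(1+\ld_1^2)(1+\ld_2^2)(1+\ld_3^2)}} \;=\; \frac{1-\sm_2}{\sqrt{\prod_i(1+\ld_i^2)}} ~.
\]
The numerator $1-\sm_2$, not $1$, is precisely what makes the corollary work on $\CG_+\cup\CG_-$: there $\sm_2\geq 9$ by Lemma~\ref{lem_SVD_3d}(iii), so $1-\sm_2<0$, and the factor $\dt=-1$ in $e_0$ flips the sign back to $+1$. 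With your formula $\dt\cdot\prod_i(1+\ld_i^2)^{-1/2}$ you would obtain a \emph{negative} number on $\CG_\pm$, hence $\psi(e_0,\ldots,e_3)=-1$, contradicting the statement. So the sentence ``combined with the sign computation this forces $\psi(e_0,e_1,e_2,e_3)=1$'' does not go through; your own sign analysis gives the wrong answer on $\CG_\pm$.

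A secondary point: the transitivity shortcut at the end (``$G_2$ acts transitively on coassociative splittings with the correct orientations'') would need more care before it replaces the computation. Transitivity on coassociative $4$-planes gives some $g\in G_2$ sending the standard plane to $P$, and the stabilizer $\RSO(4)$ then lets you match $(e_0,\ldots,e_3)$; but you must still check that the \emph{specific} normal frame $e_{3+i}$ defined in the corollary (with its sign $\dt$) is the one that $g$ produces, which amounts to the cross-product verification the paper carries out directly. So the hands-on check is not really bypassed.
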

\begin{proof}
    By a direct computation,
    \begin{align*}
        \psi\left(\frac{\pl}{\pl x^0}, \frac{\frac{\pl}{\pl x^1} +\ld_1\frac{\pl}{\pl y^1}}{\sqrt{1+\ld_1^2}}, \frac{\frac{\pl}{\pl x^2} +\ld_2\frac{\pl}{\pl y^2}}{\sqrt{1+\ld_2^2}}, \frac{\frac{\pl}{\pl x^3} +\ld_3\frac{\pl}{\pl y^3}}{\sqrt{1+\ld_3^2}}\right) &= \frac{1 - \ld_1\ld_2 - \ld_2\ld_3 - \ld_3\ld_1}{\sqrt{(1+\ld_1^2)(1+\ld_2^2)(1+\ld_3^3)}} ~.
    \end{align*}
    Since $\ld_1+\ld_2+\ld_3 = \ld_1\ld_2\ld_3$, the square of the right hand side is equal to $1$.  By plugging into $(\ld_1,\ld_2,\ld_3) = (0,0,0)$ and $(\sqrt{3},\sqrt{3},\sqrt{3})$, the right hand side is $\dt$.

    The verification of the $G_2$-basis is a straightforward computation.  We do two of them, and the rest are left as an exercise.
    \begin{align*}
        e_4\times e_1 &= \frac{\dt}{1+\ld_1^2} \left(\frac{\pl}{\pl y^1} - \ld_1\frac{\pl}{\pl x^1}\right) \times \left(\frac{\pl}{\pl x^1} + \ld_1\frac{\pl}{\pl y^1}\right) = \dt\frac{\pl}{\pl x^0} = e_0 ~, \\
        e_2\times e_3 &= \frac{1}{\sqrt{1+\ld_2^2}}\left(\frac{\pl}{\pl x^2} + \ld_2\frac{\pl}{\pl y^2}\right) \times \frac{1}{\sqrt{1+\ld_3^2}}\left(\frac{\pl}{\pl x^3} + \ld_3\frac{\pl}{\pl y^3}\right) \\
        &= \frac{1}{\sqrt{(1+\ld_2^2)(1+\ld_3^2)}}\left( (-1+\ld_2\ld_3) \frac{\pl}{\pl y^1} - (\ld_2+\ld_3)\frac{\pl}{\pl x^1}\right) \\
        &= \frac{-1+\ld_2\ld_3}{\sqrt{(1+\ld_2^2)(1+\ld_3^2)}}\left(\frac{\pl}{\pl y^1} - \ld_1\frac{\pl}{\pl x^1}\right) ~.
    \end{align*}
    By using the above computation on $\psi(e_0,e_1,e_2,e_3)$, one can find out that it is exactly $-e_4$.
\end{proof}

\subsection{Associative Subspaces}

An oriented $3$-dimensional vector subspace $Q\subset\BR^7$ is said to be \emph{associative} if it is \emph{calibrated} by $\vph$.  Namely, $\vph(Q) = \pm1$.  According to \cite{HL82}*{Corollary 1.7 on p.114}, $Q$ is associative if and only if $[u,v,w] = 0$ for a basis $u,v,w$ for $Q$.  That is to say, $\vph(u,v,w,\,\cdot\,) = 0$ as a linear functional on $\BR^7$.

\begin{lem} \label{lem_ass_SVD}
    Suppose that $Q\subset\BR^7$ is associative, and is graphical over $\{0\}\times\BR^3$.  Then, there exists an $\RSO(4) < G_2$ change of coordinate, after which $P$ is spanned by
    \begin{align*}
        \frac{\pl}{\pl y^i} + \ld_i\frac{\pl}{\pl x^i} \quad\text{for $i=1,2,3$} ~,
    \end{align*}
    for some $\ld_i\in\BR$ satisfying $\ld_1 + \ld_2 + \ld_3 = \ld_1\ld_2\ld_3$.
\end{lem}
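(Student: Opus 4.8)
The plan is to derive this from Lemma \ref{lem_coass_SVD} by passing to the orthogonal complement, so that essentially no new computation is required. Two elementary facts drive the argument. First, orthogonal complementation interchanges the associative and coassociative conditions: if $Q\subset\BR^7$ is a $3$-plane with basis $u,v,w$, then by \cite{HL82}*{Corollary 1.7} $Q$ is associative if and only if $\psi(z,u,v,w)=0$ for every $z\in\BR^7$; this is automatic for $z\in Q$, so the content is that it holds for $z\in Q^\perp$, that is, the part of $\psi$ with one leg in $Q^\perp$ and three legs in $Q$ vanishes. Under $\psi=*\vph$ that part corresponds precisely to $\vph|_{Q^\perp}$, which by \cite{HL82}*{Corollary 1.20} vanishes if and only if $Q^\perp$ is coassociative. (Alternatively, one may simply quote from \cite{HL82} that associative $3$-planes and coassociative $4$-planes are orthogonal complements of one another.) Second, if $Q$ is graphical over $\{0\}\times\BR^3$, say $Q=\{(Mv,v):v\in\BR^3\}$ for a linear map $M\colon\{0\}\times\BR^3\to\BR^4\times\{0\}$, then $Q^\perp=\{(a,-M^{T}a):a\in\BR^4\}$, which is graphical over $\BR^4\times\{0\}$.

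With these two facts, $Q^\perp$ is a coassociative $4$-plane that is graphical over $\BR^4\times\{0\}$, so Lemma \ref{lem_coass_SVD} applies: after an $\RSO(4)<G_2$ change of coordinate, $Q^\perp$ is spanned by $\frac{\pl}{\pl x^0}$ together with $\frac{\pl}{\pl x^i}+\mu_i\frac{\pl}{\pl y^i}$ for $i=1,2,3$, for some $\mu_i\in\BR$ with $\mu_1+\mu_2+\mu_3=\mu_1\mu_2\mu_3$. Apply the same $\RSO(4)<G_2$ transformation to $Q=(Q^\perp)^\perp$; since it is an isometry of $\BR^7$ it commutes with orthogonal complementation. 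Computing the complement block by block---the $\frac{\pl}{\pl x^0}$-line drops out, and within each plane $\spn\{\frac{\pl}{\pl x^i},\frac{\pl}{\pl y^i}\}$ the orthogonal complement of $\frac{\pl}{\pl x^i}+\mu_i\frac{\pl}{\pl y^i}$ is spanned by $\frac{\pl}{\pl y^i}-\mu_i\frac{\pl}{\pl x^i}$---shows that $Q$ is spanned by $\frac{\pl}{\pl y^i}-\mu_i\frac{\pl}{\pl x^i}$, $i=1,2,3$. Putting $\ld_i=-\mu_i$ (equivalently, further composing with $-\bfI_4\in\RSO(4)<G_2$, which by Remark \ref{rmk_coass_SVD} flips the sign of every $\mu_i$) puts $Q$ into the asserted form, and since $\sm_1(\bld)=\sm_3(\bld)$ is invariant under $\bld\mapsto-\bld$, the relation $\ld_1+\ld_2+\ld_3=\ld_1\ld_2\ld_3$ continues to hold.

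There is no genuinely hard step here; the one place requiring care is the first paragraph, namely verifying that complementation simultaneously trades the associative condition for the coassociative one and trades ``graphical over $\{0\}\times\BR^3$'' for ``graphical over $\BR^4\times\{0\}$''---both are short, but they are exactly what makes the reduction work. Should one prefer a self-contained argument avoiding this duality, one can instead mimic the proof of Lemma \ref{lem_coass_SVD}: realize a singular value decomposition of $M$ by an $\RSO(4)<G_2$ coordinate change (the $3$-dimensional domain $\{0\}\times\BR^3$ being acted on through the double cover $\RSO(4)\to\RSO(3)$), reducing to the case where the coefficient vectors of $M$ in $\BR^4\times\{0\}$ are mutually orthogonal; then expand the associativity condition $\psi(z,u,v,w)=0$ using \eqref{g2_star} to force the $x^0$-components to vanish and the remaining $3\times3$ coefficient block to be symmetric; and finally diagonalize that block by the distinguished $\RSO(3)<G_2$, reading off $\ld_1+\ld_2+\ld_3=\ld_1\ld_2\ld_3$ from $\tr=\det$. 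That route is more computational, but it parallels the coassociative case closely.
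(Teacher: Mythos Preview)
Your proof is correct and takes a genuinely different route from the paper. The paper argues directly: since $\dim Q=3$, there is a unit vector in $\BR^4\times\{0\}$ orthogonal to $Q$; an $\RSO(4)<G_2$ move places it at $\frac{\pl}{\pl x^0}$, so $Q$ becomes the graph of a linear map from the $(y^1,y^2,y^3)$-summand into the $(x^1,x^2,x^3)$-summand. After an $\RSO(3)<G_2$ singular value decomposition, the paper then explicitly expands $\psi(\,\cdot\,,u_1,u_2,u_3)$ using \eqref{g2_star} and reads off from its vanishing that the coefficient matrix $B$ is symmetric with $\tr B=\det B$; a further $\RSO(3)$ diagonalization finishes. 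Your argument instead passes to $Q^\perp$, invokes the associative/coassociative duality and the elementary ``graphical-over-complementary-factor'' observation, and then simply quotes Lemma~\ref{lem_coass_SVD}; no expansion of $\psi$ is needed. Your approach is cleaner and makes the parallel between the two lemmas manifest, at the cost of depending on Lemma~\ref{lem_coass_SVD} and the Harvey--Lawson duality; the paper's approach is self-contained and furnishes the explicit $\psi$-computation, which may be useful elsewhere. Your closing sketch of the direct method is close to the paper's, though the paper isolates the orthogonal direction in $\BR^4\times\{0\}$ \emph{first} (by dimension count) rather than letting the associativity equation kill the $x^0$-components afterward.
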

\begin{proof}
    Since $\dim Q = 3$, there exists a unit vector in $\BR^4\times\{0\}$ that is perpendicular to $Q$.  By $\RSO(4) < G_2$, we may assume that it is $\frac{\pl}{\pl x^0}$.  Then, $Q$ is the graph of a linear map from the $(y^1,y^2,y^3)$-summand to the $(x^1,x^2,x^3)$-summand.  After applying the singular value decomposition and the $\RSO(3) < G_2$ change of coordinate, $Q$ has the orthogonal basis
    \begin{align*}
        \frac{\pl}{\pl y^i} + \sum_{j=1}^3 B_i^j\frac{\pl}{\pl x^j} \quad\text{for $i=1,2,3$} ~.
    \end{align*}

    A straightforward computation yields
    \begin{align*}
        &\quad \psi\left(\frac{\pl}{\pl y^1} + \sum_{j=1}^3 B_1^j\frac{\pl}{\pl x^j}, \frac{\pl}{\pl y^2} + \sum_{j=1}^3 B_2^j\frac{\pl}{\pl x^j}, \frac{\pl}{\pl y^3} + \sum_{j=1}^3 B_3^j\frac{\pl}{\pl x^j}, \,\cdot\, \right) \\
        &= (-\det(B) + \tr(B))\dd x^0 + (B_3^2 - B_2^3)\dd x^1 + (B_1^3 - B_3^1)\dd x^2 + (B_2^1 - B_1^2)\dd x^3 \\
        &\quad + (B_1^1B_2^3 - B_1^3B_2^1 - B_1^1B_3^2 + B_1^2B_3^1)\dd y^1 + (B_1^2B_2^3 - B_1^3B_2^2 - B_2^1B_3^2 + B_2^2B_3^1)\dd y^2 \\
        &\qquad + (B_1^2B_3^3 - B_1^3B_3^2 + B_2^3B_3^1 - B_2^1B_3^3)\dd y^3 ~.
    \end{align*}
    Since $Q$ is associative, the above linear functional vanishes, which means $\tr(B) = \det(B)$ and $B$ is symmetric.  By the same argument as that for Lemma \ref{lem_coass_SVD}, this lemma follows.
\end{proof}

\section{Some Linear Algebra about the $\spin(7)$ Geometry of $\BR^8$} \label{sec_LA8}

Let $(x^0,\ldots,x^3,y^0,\ldots,y^3)$ be the coordinate for $\BR^8$.  The \emph{Cayley form} is
\begin{align} \label{spin_form}
    \Phi &= \dd x^{0123} + \dd y^{0123} - \om^1\w\gm^1 - \om^2\w\gm^2 - \om^3\w\gm^3
\end{align}
where
\begin{align*}
    \om^i = \dd x^0\w\dd x^i + \dd x^j\w\dd x^k \quad\text{ and }\quad
    \gm^i = \dd y^0\w\dd y^i + \dd y^j\w\dd y^k
\end{align*}
for $i = 1,2,3$ and $(i,j,k)$ a cyclic permutation of $(1,2,3)$.

The automorphism group of $\Phi$ is the Lie group $\spin(7)$: $\spin(7) = \{ g\in\RGL(8;\BR) : g^*(\Phi) = \Phi \}$.  The following subgroups of $\spin(7)$ will be used later.
\begin{itemize}
    \item For $h \in \RSU(2)$, $\rho_x^-(h)$ acts on $\{0\}\times\BR^4$ trivially, and acts on $\BR^4\times\{0\}$ by
    \begin{align*}
        \begin{bmatrix} x^0+\sqrt{-1}x^1 & -x^2+\sqrt{-1}x^3 \\ x^2+\sqrt{-1}x^3 & x^0-\sqrt{-1}x^1 \end{bmatrix} &\mapsto h\cdot \begin{bmatrix} x^0+\sqrt{-1}x^1 & -x^2+\sqrt{-1}x^3 \\ x^2+\sqrt{-1}x^3 & x^0-\sqrt{-1}x^1 \end{bmatrix} ~.
    \end{align*}
    The action leaves $\om^i$'s invariant, and thus $\rho_x^-(\RSU(2)) < \spin(7)$.  Note that it acts transitively on the unit vectors of $\BR^4\times\{0\}$.
    \item Analogously to $\rho_x^-$, $\rho_y^-(\RSU(2)) < \spin(7)$, and acts transitively on the unit vectors of $\{0\}\times\BR^4$.
    \item For $h \in \RSO(4)$, let $\td{\rho}(\bx,\by) = (h\bx,h\by)$.  In terms of matrix representative, $\td{\rho}(h)$ is the doubling of $h$ as a $4\times4$-block diagonal matrix.
\end{itemize}

\subsection{Cayley Subspaces}

The metric dual of $\Phi$ defines a triple cross product by
\begin{align*}
    \ip{z}{u\times v\times w} &= \Phi(z,u,v,w) ~.
\end{align*}
An ordered, orthonormal basis $\{e_0,\ldots,e_7\}$ is called a \emph{$\spin(7)$-basis} if their vector cross products satisfy the same relation as that of the basis given by the above coordinate.

An oriented $4$-dimensional vector subspace $R\subset\BR^7$ is said to be \emph{Cayley} if it is \emph{calibrated} by $\Phi$, $\Phi(R) = \pm1$.  By \cite{HL82}*{Proposition 1.25 on p.118}, a $4$-plane $R$ is Cayley if and only if $u\times v\times w\in R$ for any $u,v,w\in R$.

\begin{lem} \label{lem_Cayley_SVD}
    Suppose that $R\subset\BR^8$ is Cayley, and is graphical over $\BR^4\times\{0\}$.  Then, there exists an $\spin(7)$ change of coordinate by using $\rho_x^-(\RSU(2))$, $\rho_y^-(\RSU(2))$ and $\td{\rho}(\RSO(4))$, after which $R$ is spanned by
    \begin{align*}
        \frac{\pl}{\pl x^i} + \ld_i\frac{\pl}{\pl y^i} \quad\text{for $i=0,1,2,3$} ~,
    \end{align*}
    for some $\ld_i\in\BR$ satisfying $\ld_0 + \ld_1 + \ld_2 + \ld_3 = \ld_1\ld_2\ld_3 + \ld_0\ld_2\ld_3 + \ld_0\ld_3\ld_1 + \ld_0\ld_1\ld_2$.
\end{lem}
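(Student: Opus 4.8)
The plan is to follow the same three-step strategy used in the proofs of Lemma~\ref{lem_coass_SVD} and Lemma~\ref{lem_ass_SVD}: first normalize the graph by a singular value decomposition, then use the calibration condition to force the relevant matrix to be symmetric, and finally diagonalize it by a further rotation to read off the scalar relation on the $\ld_i$'s. Since $R$ is the graph of a linear map $L\colon\BR^4\times\{0\}\to\{0\}\times\BR^4$, I would first apply the ordinary SVD to $L$. The subtlety is that the SVD produces an $\RSO(4)$ change of coordinate on the domain and, separately, an $\RSO(4)$ change of coordinate on the target, and we need both of these to be realized inside $\spin(7)$. This is exactly where the three listed subgroups enter: $\td\rho(\RSO(4))$ simultaneously rotates $\bx$ and $\by$, while $\rho_x^-(\RSU(2))$ acts on the $\bx$-block alone (trivially on $\by$) and $\rho_y^-(\RSU(2))$ on the $\by$-block alone. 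So the first step is to argue that an arbitrary pair $(g_{\mathrm{dom}},g_{\mathrm{tar}})\in\RSO(4)\times\RSO(4)$ acting on domain and target can be written, after also rescaling/relabelling, in terms of these generators — concretely, factor $g_{\mathrm{tar}} = g_{\mathrm{dom}}\cdot(g_{\mathrm{dom}}^{-1}g_{\mathrm{tar}})$, apply $\td\rho(g_{\mathrm{dom}})$ to handle the common part, and then absorb the residual target rotation $g_{\mathrm{dom}}^{-1}g_{\mathrm{tar}}\in\RSO(4)$ using $\rho_y^-$-type moves; a residual domain rotation commuting with the diagonal structure is handled by $\rho_x^-$. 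After this normalization $R$ is spanned by $\tfrac{\pl}{\pl x^i} + \sum_{j}B_i^j\,\tfrac{\pl}{\pl y^j}$ for $i=0,\ldots,3$, with $\sum_k B_i^k B_j^k = 0$ for $i\neq j$ coming from orthogonality, and one may even arrange $B$ diagonal from the SVD alone — but keeping $B$ general and only deriving symmetry from the Cayley condition will make the argument parallel to the earlier lemmas.

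The second step is to impose that $R$ is Cayley. I would use the criterion $u\times v\times w\in R$ for all $u,v,w\in R$ (equivalently $\Phi|_R$ is extremal, $\Phi(R)=\pm1$, equivalently a suitable contraction of $\Phi$ restricted to $R$ vanishes in the complementary directions), applied to the spanning vectors $v_i = \tfrac{\pl}{\pl x^i}+\sum_j B_i^j\tfrac{\pl}{\pl y^j}$. Computing $\Phi(v_i,v_j,v_k,\,\cdot\,)$ using the explicit form \eqref{spin_form} of $\Phi$ and demanding it lie in $R^\perp$ — or more efficiently, demanding $\langle e_\alpha, v_i\times v_j\times v_k\rangle = 0$ for the complementary frame — yields a system of polynomial equations in the entries $B_i^j$. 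By exact analogy with the associative computation in Lemma~\ref{lem_ass_SVD} (where the vanishing of an explicit $7$-component linear functional forced $\tr B = \det B$ and $B$ symmetric), I expect these to collapse to: $B$ is symmetric, and a single scalar identity which, once $B$ is diagonalized, becomes $\sm_1(\bld)=\sm_3(\bld)$ in four variables, i.e. $\ld_0+\ld_1+\ld_2+\ld_3 = \ld_1\ld_2\ld_3 + \ld_0\ld_2\ld_3+\ld_0\ld_3\ld_1+\ld_0\ld_1\ld_2$. The cleanest route to the scalar relation may actually be to postpone diagonalization: once $B$ is symmetric and $R$ Cayley, expand $\Phi(v_0,v_1,v_2,v_3)$ directly, note it equals $\pm\sqrt{\det(I+B^TB)}\cdot\Phi(R)=\pm\sqrt{\det(I+B^2)}$ on one hand and a polynomial in the $\om^i,\gm^i$ pairings on the other, compare, and extract the relation; alternatively simply invoke that in the diagonal frame $\Phi$ restricted to the graph is $1-\sum_{\text{pairs}}\ld_i\ld_j + (\text{products of four})$ up to normalization, which forces the symmetric-function identity.

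Once $B$ is symmetric, the third step is immediate: a symmetric $4\times4$ matrix is diagonalized by some $O\in\RSO(4)$, and conjugating by $\td\rho(O)$ (which acts as $O$ on both $\bx$ and $\by$, hence is the correct simultaneous change of basis for a graph) brings $R$ to the stated normal form $\tfrac{\pl}{\pl x^i}+\ld_i\tfrac{\pl}{\pl y^i}$; the trace-vs-symmetric-functions identity descends to $\ld_0+\ld_1+\ld_2+\ld_3 = \sum \ld_i\ld_j\ld_k$ over the four $3$-subsets. The main obstacle I anticipate is the first step — verifying that the SVD's independent domain and target $\RSO(4)$ rotations genuinely lie in the subgroup of $\spin(7)$ generated by $\rho_x^-(\RSU(2))$, $\rho_y^-(\RSU(2))$ and $\td\rho(\RSO(4))$. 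This is a statement about how $\spin(7)$ intersects $\RSO(4)\times\RSO(4)\subset\RSO(8)$; the key facts to assemble are that $\td\rho(\RSO(4))$ gives the diagonal copy of $\RSO(4)$, that $\rho_x^-(\RSU(2))$ and $\rho_y^-(\RSU(2))$ supply the two anti-self-dual $\RSU(2)$ factors acting on the $\bx$- and $\by$-blocks respectively, and that together with the diagonal $\RSO(4)$ these generate all of $\RSO(4)_{\bx}\times\RSO(4)_{\by}$ modulo the self-dual/anti-self-dual decomposition — enough to absorb the two SVD rotations. Everything after that is the by-now-routine calibration computation, parallel to Lemmas~\ref{lem_coass_SVD} and~\ref{lem_ass_SVD}, just with one extra variable.
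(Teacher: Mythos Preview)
The gap is real and sits exactly where you anticipated. The subgroup of $\RSO(4)_\bx\times\RSO(4)_\by\subset\RSO(8)$ generated by $\rho_x^-(\RSU(2))$, $\rho_y^-(\RSU(2))$ and $\td\rho(\RSO(4))$ is \emph{not} all of $\RSO(4)\times\RSO(4)$: since $\rho_x^-(h)$ and $\rho_y^-(k)$ fix each $\om^i$ and each $\gm^i$ individually, every element of the generated group induces the \emph{same} $\RSO(3)$ action on the self-dual $2$-forms of the $\bx$-block and of the $\by$-block. In the $\RSU(2)_L\times\RSU(2)_R$ picture of $\RSO(4)$ you obtain only $\{((a',b),(a'',b)):a',a'',b\in\RSU(2)\}$, a $9$-dimensional group. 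Your factorization $g_{\mathrm{tar}} = g_{\mathrm{dom}}\cdot(g_{\mathrm{dom}}^{-1}g_{\mathrm{tar}})$ then fails because the residual $g_{\mathrm{dom}}^{-1}g_{\mathrm{tar}}$ is a general element of $\RSO(4)$, not one in $\rho_y^-(\RSU(2))$. Asserting that the Cayley condition itself forces the SVD rotations into this subgroup is essentially the content of the lemma, so that route is circular. Your fallback (deduce that the $4\times4$ $B$ is symmetric from Cayley plus orthogonal rows) also fails: apply $\rho_y^-(h)$ and then a compensating $\td\rho$ to any diagonal Cayley graph and you obtain a Cayley graph matrix with $BB^T$ diagonal that is not symmetric for generic $h$.

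The paper sidesteps this by never attempting to realize the full SVD inside $\spin(7)$. It takes the SVD only as producing an orthogonal basis for $R$, picks one element $u_0$ with nonzero $\by$-projection, and uses merely the \emph{transitivity} of $\rho_x^-(\RSU(2))$ and $\rho_y^-(\RSU(2))$ on the respective unit spheres to arrange $u_0=\tfrac{\pl}{\pl x^0}+\ld_0\tfrac{\pl}{\pl y^0}$. The remaining three vectors then lie in the $x^1x^2x^3$- and $y^1y^2y^3$-subspaces, and a $\td\rho(\RSO(3))$ rotation (stabilizer of the $x^0$-direction) brings them to $u_i=\tfrac{\pl}{\pl x^i}+\sum_j B_i^j\tfrac{\pl}{\pl y^j}$ with a $3\times3$ block $B$. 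Now the Cayley condition --- $u_1\times u_2\times u_3$ parallel to $u_0$, hence with vanishing $\tfrac{\pl}{\pl y^1},\tfrac{\pl}{\pl y^2},\tfrac{\pl}{\pl y^3}$ components --- forces this $3\times3$ $B$ to be symmetric, exactly as in Lemma~\ref{lem_coass_SVD}. A further $\td\rho(\RSO(3))$ diagonalizes $B$, and the scalar relation is read off by squaring $\Phi(u_0,u_1,u_2,u_3)=\pm\prod_i|u_i|$; that last step matches yours.
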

\begin{proof}
    Since $R$ is graphical over $\BR^4\times\{0\}$, the singular value decomposition finds an ordered, orthogonal basis for $R$, whose projection on $\BR^4\times\{0\}$ is also an oriented, orthonormal basis, and the projection onto $\{0\}\times\BR^4$ is an orthogonal set.  We call them the SVD basis for $R$.
    
    One only needs to consider that case that $R$ is not $\BR^4\times\{0\}$.  In this case, there exists an element , $u_0$,in the SVD basis with non-trivial projection onto $\{0\}\times\BR^4$.  Since $\rho_x^-(\RSU(2))$ and $\rho_y^-(\RSU(2))$ act transitively on $\BR^4\times\{0\}$ and $\{0\}\times\BR^4$, respectively, this element can be assumed to be
    \begin{align*}
        u_0 &= \frac{\pl}{\pl x^0} + \ld_0\frac{\pl}{\pl y^0}
    \end{align*}
    for some $\ld_0\neq 0$.  Let $\RSO(3) < \RSO(4)$ be the stabilizer of the $x^0$-direction.  By an $\td{\rho}(\RSO(3))$-change of coordinate, the rest elements, $\{u_i\}_{i=1}^3$, of the SVD basis can be assumed to be
    \begin{align*}
        u_i &= \frac{\pl}{\pl x^i} + \sum_{j=1}^3 B_i^j\frac{\pl}{\pl y^j}
    \end{align*}
    for $i=1,2,3$.

    Since $R$ is Cayley and $\{u_i\}_{i=0}^3$ is orthogonal, $u_1\times u_2\times u_3$ is parallel to $u_0$.  The vanishing of the $\frac{\pl}{\pl y^i}$-component of $u_1\times u_2\times u_3$ for $i=1,2,3$ implies that $B$ is symmetric.  By using the $\td{\rho}(\RSO(3))$-change of coordinate again, $B$ can be assumed to be diagonal, and $u_i = \frac{\pl}{\pl x^i} + \ld_i\frac{\pl}{\pl y^i}$ for $i=1,2,3$.

    For an orthogonal basis of a Cayley plane, \cite{HL82}*{Theorem 1.24 on p.118} asserts that $\Phi(u_0,u_1,u_2,u_3) = \pm|u_0|\,|u_1|\,|u_2|\,|u_3|$.  It follows that
    \begin{align} \label{eqn_Cayley_basic} \begin{split}
        &\quad 1 + \ld_0\ld_1\ld_2\ld_3 - \ld_0\ld_1 - \ld_2\ld_3 - \ld_0\ld_2 - \ld_3\ld_1 - \ld_0\ld_3 - \ld_1\ld_2 \\
        & = \pm\sqrt{(1+\ld_0^2)(1+\ld_1^2)(1+\ld_2^2)(1+\ld_3^2)} ~.
    \end{split} \end{align}
    Squaring it leads to the relation asserted by this lemma.
\end{proof}

Similar to Corollary \ref{cor_coass_SVD}, the basis given by Lemma \ref{lem_Cayley_SVD} can be extended to a $\spin(7)$-basis.  We will focus on the $\CS_0$ component described in Lemma \ref{lem_SVD_4d}.

\begin{cor} \label{cor_Cayley_SVD}
    For any $(\ld_0,\ld_1,\ld_2,\ld_3)\in\CS_0$, let
    \begin{align*}
        e_i &= \frac{1}{\sqrt{1+\ld_i^2}}\left(\frac{\pl}{\pl x^i} + \ld_i\frac{\pl}{\pl y^i}\right) ~,
        & e_{4+i} &= \frac{1}{\sqrt{1+\ld_i^2}}\left(\frac{\pl}{\pl y^i} - \ld_i\frac{\pl}{\pl x^i}\right)
    \end{align*}
    for $i=0,1,2,3$.  Then, $\{e_i\}_{i=0}^7$ is a $\spin(7)$-basis.  In particular, $\Phi(e_0,e_1,e_2,e_3) = 1$.
\end{cor}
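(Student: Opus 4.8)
The plan is to reduce both assertions to showing that the linear map $T\colon\BR^8\to\BR^8$ with $T(\pl/\pl x^i)=e_i$ and $T(\pl/\pl y^i)=e_{4+i}$, $i=0,1,2,3$, lies in $\spin(7)$. First I would record that $\{e_i\}_{i=0}^7$ is orthonormal: $|e_i|=|e_{4+i}|=1$ and $\ip{e_i}{e_{4+i}}=0$ follow at once from $\ip{\pl/\pl x^i}{\pl/\pl y^i}=0$, while the remaining pairs lie in mutually orthogonal coordinate $2$-planes; hence $T\in\RSO(8)$. By the definition of a $\spin(7)$-basis, $\{e_i\}_{i=0}^7$ is one exactly when $\Phi$, written in the coframe dual to $\{e_i\}$, has the coefficients appearing in \eqref{spin_form}, that is, exactly when $T^*\Phi=\Phi$. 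Moreover, once $T^*\Phi=\Phi$ is known the ``in particular'' is immediate: $\Phi(e_0,e_1,e_2,e_3)=(T^*\Phi)(\pl/\pl x^0,\ldots,\pl/\pl x^3)=\Phi(\pl/\pl x^0,\ldots,\pl/\pl x^3)=1$.

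Next I would identify $T$. Since the four coordinate $2$-planes $(x^i,y^i)$ are mutually orthogonal and span $\BR^8$, $T$ is the product of four commuting rotations, the $i$-th rotating $(x^i,y^i)$ by the angle $\ta_i:=\arctan\ld_i\in(-\pi/2,\pi/2)$; equivalently $T^*\dd x^i=\cos\ta_i\,\dd x^i-\sin\ta_i\,\dd y^i$ and $T^*\dd y^i=\sin\ta_i\,\dd x^i+\cos\ta_i\,\dd y^i$. Substituting these into \eqref{spin_form} and expanding --- in the spirit of the $G_2$-basis verification in Corollary \ref{cor_coass_SVD}, but with more monomials since $\Phi$ is a $4$-form on $\BR^8$ --- one finds that the coefficient of each exterior monomial in $T^*\Phi-\Phi$ is, up to sign, one of $0$, $\cos(\ta_0+\ta_1+\ta_2+\ta_3)-1$, or $\sin(\ta_0+\ta_1+\ta_2+\ta_3)$: the monomials with a repeated coordinate index (such as $\dd x^0\w\dd x^1\w\dd y^0\w\dd y^1$) match $\Phi$ automatically via $\cos^2\ta_i+\sin^2\ta_i=1$; the coefficients of $\dd x^{0123}$ and of $\dd y^{0123}$ each collapse to $\cos(\ta_0+\ta_1+\ta_2+\ta_3)$; and the coefficient of a monomial such as $\dd x^0\w\dd x^1\w\dd x^2\w\dd y^3$, which is absent from $\Phi$, collapses to $-\sin(\ta_0+\ta_1+\ta_2+\ta_3)$. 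Hence $T^*\Phi=\Phi$ if and only if $\ta_0+\ta_1+\ta_2+\ta_3\in2\pi\BZ$.

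It remains to verify $\ta_0+\ta_1+\ta_2+\ta_3\in2\pi\BZ$ for every $\bld\in\CS_0$. The tangent addition formula gives
\begin{align*}
    \tan(\ta_0+\ta_1+\ta_2+\ta_3)&=\frac{\sm_1(\bld)-\sm_3(\bld)}{1-\sm_2(\bld)+\ld_0\ld_1\ld_2\ld_3}~,
\end{align*}
whose numerator vanishes because $\bld\in\CS$ forces $\sm_1(\bld)=\sm_3(\bld)$, and whose denominator is nonzero on $\CS$: from the elementary identity $\prod_{i=0}^3(1+\ld_i^2)=(1-\sm_2(\bld)+\ld_0\ld_1\ld_2\ld_3)^2+(\sm_1(\bld)-\sm_3(\bld))^2$ and $\sm_1(\bld)=\sm_3(\bld)$, the denominator has absolute value $\sqrt{\prod_{i=0}^3(1+\ld_i^2)}>0$. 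Hence $\ta_0+\ta_1+\ta_2+\ta_3\in\pi\BZ$ on $\CS_0$. As $\bld\mapsto\ta_0+\ta_1+\ta_2+\ta_3=\sum_{i=0}^3\arctan\ld_i$ is continuous on the connected set $\CS_0$ (Lemma \ref{lem_SVD_4d}(ii)) and takes values in the discrete set $\pi\BZ$, it is constant; at $\bld=(0,0,0,0)\in\CS_0$ the value is $0\in2\pi\BZ$. Therefore $T^*\Phi=\Phi$, and the corollary follows.

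I expect the main obstacle to be the bookkeeping of the second paragraph: unwinding $T^*\Phi$ from \eqref{spin_form} produces a fair number of monomials, and one must verify that every coefficient of $T^*\Phi-\Phi$ is indeed, up to sign, one of $0$, $\cos(\ta_0+\ta_1+\ta_2+\ta_3)-1$, $\sin(\ta_0+\ta_1+\ta_2+\ta_3)$ --- it is precisely here that the defining relation $\sm_1(\bld)=\sm_3(\bld)$ of $\CS$ enters, repackaged as the angle constraint. A more conceptual alternative would invoke the transitivity of $\spin(7)$ on oriented Cayley $4$-planes together with the fact that the stabilizer of such a plane surjects onto its $\RSO(4)$, but that would import structural facts about $\spin(7)$ not set up in the text, so I would keep the direct computation.
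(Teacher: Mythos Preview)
Your proof is correct and is organized differently from the paper's. The paper argues, by continuity on $\CS_0$, that the sign in \eqref{eqn_Cayley_basic} is $+$, and then verifies by hand that $\Phi$ takes the correct values on the various $4$-tuples $e_{i_1},\ldots,e_{i_4}$, displaying two sample computations and leaving the rest to the reader. You instead package the map $T$ as the block rotation with angles $\ta_i=\arctan\ld_i$, observe that the repeated-index part of $\Phi$ is automatically $T$-invariant (each $\dd x^i\w\dd y^i$ is the area form of a rotated plane), and reduce the non-repeated-index part to the single condition $\ta_0+\ta_1+\ta_2+\ta_3\in2\pi\BZ$; the tangent addition formula together with $\sm_1=\sm_3$ and connectedness of $\CS_0$ then forces $\sum_i\ta_i=0$. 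Both arguments rest on the same algebraic identity $\prod_i(1+\ld_i^2)=(1-\sm_2+\sm_4)^2+(\sm_1-\sm_3)^2$ and the same continuity step, but your route explains \emph{why} all the individual $4$-tuple checks succeed simultaneously (they are the real and imaginary parts of $e^{i\sum\ta_i}$ applied to $\re(\dd z^{0123})$), at the cost of the bookkeeping you flag in your last paragraph. One small sharpening: the claim that the non-repeated-index coefficients collapse to $\cos(\sum\ta_i)$ or $\pm\sin(\sum\ta_i)$ is most cleanly seen by noting that this part of $\Phi$ equals $\re(\dd z^0\w\dd z^1\w\dd z^2\w\dd z^3)$ with $\dd z^j=\dd x^j+\sqrt{-1}\,\dd y^j$, on which $T^*$ acts by $e^{\sqrt{-1}\sum\ta_i}$.
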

\begin{proof}
    When $(\ld_0,\ld_1,\ld_2,\ld_3)\in\CS_0$, one can argue by continuity that the sign in \eqref{eqn_Cayley_basic} is plus.  With \eqref{eqn_Cayley_basic}, it is a direct computation to verify that $\{e_i\}_{i=0}^7$ is a $\spin(7)$-basis.  For instance,
    \begin{align*}
        \Phi(e_0,e_1,e_2,e_3) &= \frac{1 + \ld_0\ld_1\ld_2\ld_3 - \ld_0\ld_1 - \ld_2\ld_3 - \ld_0\ld_2 - \ld_3\ld_1 - \ld_0\ld_3 - \ld_1\ld_2}{\sqrt{(1+\ld_0^2)(1+\ld_1^2)(1+\ld_2^2)(1+\ld_3^2)}} = 1 ~, \\
        \Phi(e_0,e_5,e_6,e_7) &= \frac{-\ld_1\ld_2\ld_3 + \ld_0 + \ld_1 - \ld_0\ld_2\ld_3 + \ld_2 - \ld_0\ld_1\ld_3 + \ld_3 - \ld_0\ld_1\ld_2}{\sqrt{(1+\ld_0^2)(1+\ld_1^2)(1+\ld_2^2)(1+\ld_3^2)}} = 0 ~,
    \end{align*}
    and the rest are left as an exercise.
\end{proof}

\section{Second Fundamental Form} \label{sec_sfform}

\subsection{Calibrated Submanifolds in $\BR^7$}

A $4$-dimensional submanifold $X\subset\BR^7$ is called a \emph{coassociative submanifold} if $T_pX$ is coassociative for every $p\in X$.  A $3$-dimensional submanifold $Y\subset\BR^7$ is called an \emph{associative submanifold} if $T_pY$ is associative for every $p\in Y$.  Since they are calibrated submanifolds (see \cite{HL82}), they have vanishing mean curvature.  They come from the algebraic structure of octonions, and their second fundamental form admits more symmetry.  

\begin{lem} \label{lem_coass_h}
    Let $X\subset\BR^7$ be a coassociative submanifold.  At a $p\in X$, suppose that $\{e_i\}_{i=0}^3$ is a basis for $T_pX$ and $\{e_\af\}_{\af=4}^6$ is a basis for $N_pX$ such that $\{e_0,\ldots,e_6\}$ is a $G_2$-basis.  Then, the second fundamental form at $p$, $h_{\af ij} = \ip{D_{e_i}e_j}{e_\af}$, satisfies the following relations.
    \begin{align*}
        h_{41i} + h_{52i} + h_{63i} &= 0 ~,  & h_{40i} + h_{53i} - h_{62i} &= 0 ~, \\
        h_{43i} - h_{50i} - h_{61i} &= 0 ~,  & h_{42i} - h_{51i} + h_{60i} &= 0
    \end{align*}
    for $i=0,1,2,3$.
\end{lem}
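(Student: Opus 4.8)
The plan is to differentiate the coassociative condition $\vph|_X = 0$ along $X$ and extract the asserted linear relations among the components of the second fundamental form. Recall that for a submanifold $X \subset \BR^7$ the ambient form $\vph$ is parallel, so for any tangent vector fields $u, v, w$ on $X$ the identity $0 = (\nabla_{e_i}\vph)(u,v,w) = e_i(\vph(u,v,w)) - \vph(\nabla_{e_i}u, v, w) - \cdots$ holds. Applying this with $u,v,w$ chosen among $e_0, e_1, e_2, e_3$ (tangent) and using $\vph|_X = 0$, the terms $e_i(\vph(\cdots))$ vanish, and $\nabla_{e_i}e_j = D_{e_i}e_j$ decomposes into its tangential part (which pairs with tangent vectors to give a quantity killed by $\vph|_X = 0$ again, after antisymmetrization) plus its normal part $\sum_{\af=4}^6 h_{\af ij}e_\af$. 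Hence $0 = \sum_{\af} \big( h_{\af i u}\,\vph(e_\af, v, w) + h_{\af i v}\,\vph(u, e_\af, w) + h_{\af i w}\,\vph(u, v, e_\af)\big)$, where I abbreviate the index of $u,v,w$ in the subscript.

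**First** I would fix the $G_2$-basis so that $\{e_0,e_1,e_2,e_3\}$ spans $T_pX$ and $\{e_4,e_5,e_6\}$ spans $N_pX$, and record the values of $\vph$ on triples with exactly one normal entry; from the normal form of $\vph$ in a $G_2$-basis,
\begin{align*}
\vph &= \ta^{456} - \ta^4\w(\ta^{01}+\ta^{23}) - \ta^5\w(\ta^{02}+\ta^{31}) - \ta^6\w(\ta^{03}+\ta^{12}),
\end{align*}
the only nonzero values $\vph(e_\af, e_p, e_q)$ with $\af\in\{4,5,6\}$, $p,q\in\{0,1,2,3\}$ are those read off from the three cross terms: $e_4$ pairs with $(e_0,e_1)$ and $(e_2,e_3)$; $e_5$ pairs with $(e_0,e_2)$ and $(e_3,e_1)$; $e_6$ pairs with $(e_0,e_3)$ and $(e_1,e_2)$ (up to sign). **Then**, for each fixed $i$, I would run the identity above over the four tangent triples $(e_1,e_2,e_3)$, $(e_0,e_2,e_3)$, $(e_0,e_3,e_1)$, $(e_0,e_1,e_2)$. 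Each choice produces exactly one of the four claimed relations: e.g. the triple $(e_0,e_1,e_2)$ feeds the three normal directions through the pairings $(e_0,e_1)\leftrightarrow e_4$, $(e_1,e_2)\leftrightarrow e_6$, $(e_0,e_2)\leftrightarrow e_5$, yielding $h_{42i} - h_{51i} + h_{60i} = 0$ after tracking signs, and similarly for the others. The symmetry $h_{\af ij} = h_{\af ji}$ is used to keep the second and third indices in the stated order.

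**The main obstacle** is purely bookkeeping: getting every sign right. The signs in $\vph$ (the cross terms carry a minus, and the cyclic $(i,j,k)$ convention mixes orientations), combined with the antisymmetry of $\vph$ in its three arguments and the reordering needed to match the stated index positions, make it easy to land on a relation that differs by a sign or a permutation from the one asserted. The clean way to avoid error is to write $\vph$ in the fully expanded $G_2$-basis form once, tabulate the six nonzero coefficients $\vph(e_\af,e_p,e_q)$ with their signs, and then mechanically assemble the four equations; I would sanity-check the result against a known coassociative plane — for instance the split $T_pX = \spn\{e_0,\dots,e_3\}$ with a diagonal second fundamental form, or the explicit $G_2$-basis of Corollary \ref{cor_coass_SVD} — to confirm both the form of the relations and the overall normalization. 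No analytic input is needed; the entire lemma is a consequence of $\nabla\vph = 0$ and $\vph|_X = 0$.
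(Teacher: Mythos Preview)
Your proposal is correct. Both your argument and the paper's rest on the parallelism of the $G_2$ structure, but the implementations differ. The paper extends $\{e_0,\dots,e_6\}$ to a local $G_2$-frame on $X$, then differentiates the \emph{cross-product} relations among the normal vectors, e.g.\ $e_4 = e_5\times e_6$, using $\nabla(\times)=0$; each identity is obtained by expanding $\ip{D_{e_i}e_j}{e_\af}$ through the product rule for $\times$ and the $G_2$-multiplication table. You instead differentiate the \emph{form} relation $\vph(e_j,e_k,e_\ell)=0$ directly for each of the four tangent triples and read off the relations from the coefficient table of $\vph$ in a $G_2$-basis. These are dual formulations of the same computation. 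Your route is slightly more systematic (one identity, four triples, four relations) and avoids having to recall the specific cross-product identities; the paper's route is perhaps more conceptual in that it makes explicit which normal-frame relation is responsible for each identity. One minor remark: in your sketch the tangential parts of $D_{e_i}e_j$ are killed simply because $\vph|_X=0$ on any three tangent vectors---no antisymmetrization is needed.
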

\begin{proof}
    Extend $e_0, e_1, e_2$ to an orthonormal field for $TX$ over a neighborhood of $p$.  Let $e_3$ be the tangent field of $TX$ such that $\psi(e_0,e_1,e_2,e_3) = 1$.  Let $e_{3+i} = e_i\times e_0$ for $i=1,2,3$.  Since $X$ is coassociative,  this gives a $G_2$-basis for every point in this neighborhood of $p$ in $X$.  Since $\vph$ is parallel, so is the vector cross product, and hence
    \begin{align*}
        h_{41i} &= \ip{D_{e_i}e_1}{e_4} = -\ip{e_1}{D_{e_i}e_4} = -\ip{e_1}{D_{e_i}(e_5\times e_6)} \\
        &= -\ip{e_1}{(D_{e_i}e_5)\times e_6 + e_5\times(D_{e_i}e_6)} \\
        &= \ip{e_1\times e_6}{D_{e_i}e_5} - \ip{e_1\times e_5}{D_{e_i}e_6} \\
        &= \ip{e_2}{D_{e_i}e_5} + \ip{e_3}{D_{e_i}e_6} = - h_{52i} - h_{63i} ~.
    \end{align*}
    The other equalities can be proved by similar calculations.
\end{proof}

The second fundamental form of associative submanifolds admit similar symmetries.  The proof is very similar, and is omitted.
\begin{lem} \label{lem_ass_h}
    Let $Y\subset\BR^7$ be an associative submanifold.  At a $p\in Y$, suppose that $\{e_i\}_{i=4}^6$ is a basis for $T_pY$ and $\{e_\af\}_{\af=0}^3$ is a basis for $N_pY$ such that $\{e_0,\ldots,e_6\}$ is a $G_2$-basis.  Then, the second fundamental form at $p$, $h_{\af ij} = \ip{D_{e_i}e_j}{e_\af}$, satisfies the following relations.
    \begin{align*}
        h_{14i} + h_{25i} + h_{36i} &= 0 ~,  & h_{04i} + h_{35i} - h_{26i} &= 0 ~, \\
        h_{34i} - h_{05i} - h_{16i} &= 0 ~,  & h_{24i} - h_{15i} + h_{06i} &= 0
    \end{align*}
    for $i=4,5,6$.
\end{lem}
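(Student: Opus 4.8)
The plan is to run the proof of Lemma~\ref{lem_coass_h} again, with the roles of the tangent and normal bundles interchanged. First I would extend $e_4,e_5,e_6$ to a local orthonormal frame of $TY$ over a neighborhood of $p$ in $Y$, oriented so that $\vph(e_4,e_5,e_6)=1$; since $Y$ is an associative submanifold, $T_qY$ is an oriented associative subspace for every nearby $q$. Then I would pick a local unit normal field $e_0$ along $Y$ whose value at $p$ is the given $e_0$, and set $e_1=e_0\times e_4$, $e_2=e_0\times e_5$, $e_3=e_0\times e_6$, where $\times$ is the vector cross product of $\BR^7$.

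The first thing to check is that $\{e_0,\ldots,e_6\}$ is a $G_2$-basis at every point of $Y$ near $p$, and that at $p$ it is the given basis. The latter holds because in any $G_2$-basis $e_0\times e_4=e_1$, $e_0\times e_5=e_2$, $e_0\times e_6=e_3$, so the constructed frame and the given one agree at $p$ once the extensions are chosen to start there. For the former: associativity of $T_qY$ says precisely that $T_qY$ is closed under $\times$, and consequently that $u\times n\in N_qY$ whenever $u\in T_qY$ and $n\in N_qY$; combined with the identity $\ip{u\times v}{u\times w}=|u|^2\ip{v}{w}-\ip{u}{v}\ip{u}{w}$ for the $G_2$ cross product, this shows that $e_1,e_2,e_3$ form an orthonormal basis of the orthogonal complement of $e_0$ in $N_qY$, after which a check of the remaining products against the $G_2$ multiplication table shows that the whole ordered frame is a $G_2$-basis. (Equivalently, this is the standard $G_2$-reduction of the frame bundle of an associative submanifold, as developed in~\cite{K05}, \cite{SW17}.)

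With the local $G_2$-frame in hand, the computation is mechanical, exactly as in Lemma~\ref{lem_coass_h}. Since $\vph$, and hence $\times$, is parallel on $\BR^7$, one has $D_Z(u\times v)=(D_Zu)\times v+u\times(D_Zv)$; together with $\ip{a\times b}{c}=\ip{a}{b\times c}$ and the symmetry of the second fundamental form in its tangent indices, all four relations fall out of expanding $D_{e_i}(e_5\times e_6)$. Indeed, for $i\in\{4,5,6\}$ and $\af\in\{0,1,2,3\}$, using $e_4=e_5\times e_6$,
\begin{align*}
    h_{\af 4 i} &= \ip{D_{e_i}e_4}{e_\af}
    = \ip{(D_{e_i}e_5)\times e_6}{e_\af} + \ip{e_5\times(D_{e_i}e_6)}{e_\af} \\
    &= \ip{D_{e_i}e_5}{e_6\times e_\af} + \ip{D_{e_i}e_6}{e_\af\times e_5} ~,
\end{align*}
and since $e_6\times e_\af$ and $e_\af\times e_5$ are again signed members of $\{e_0,e_1,e_2,e_3\}$ (because $e_6,e_5$ are tangent and $e_\af$ is normal), each choice of $\af$ yields one of the stated relations. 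For instance, $\af=1$ gives $e_6\times e_1=-e_2$ and $e_1\times e_5=-e_3$, hence $h_{14i}=-h_{25i}-h_{36i}$; the values $\af=0,2,3$ give the remaining three identities.

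I expect the only real obstacle to be this first step — verifying that the constructed local frame is genuinely a $G_2$-basis, which is exactly where associativity of $TY$ enters, and keeping every cross-product sign consistent with the orientation convention $\vph(e_4,e_5,e_6)=1$. Once that bookkeeping is settled, each of the four relations is the same one-line manipulation displayed above, so I would simply record the four computations in parallel and otherwise refer to the proof of Lemma~\ref{lem_coass_h}.
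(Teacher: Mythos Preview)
Your proposal is correct and is exactly the argument the paper has in mind: the paper omits the proof entirely, saying only ``The proof is very similar, and is omitted,'' and what you have written is precisely the natural adaptation of the proof of Lemma~\ref{lem_coass_h} with tangent and normal roles swapped. Your sample computation for $\af=1$ (and the analogous ones for $\af=0,2,3$) reproduces all four stated identities, and your remark that the only nontrivial step is verifying that the locally constructed frame is a $G_2$-basis---which uses that an associative plane is closed under $\times$, so $e_0\times e_{3+i}\in N_qY$---is on target.
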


\subsection{Calibrated Submanifolds in $\BR^8$}

A $4$-dimensional submanifold $Z\subset\BR^8$ is called a \emph{Cayley submanifold} if $T_pZ$ is Cayley for every $p\in Z$.  Unsurprisingly, its second fundamental form also admits more symmetry.

\begin{lem} \label{lem_Cayley_h}
    Let $Z\subset\BR^8$ be a Cayley submanifold.  At a $p\in Z$, suppose that $\{e_i\}_{i=0}^3$ is a basis for $T_pZ$ and $\{e_\af\}_{\af=4}^7$ is a basis for $N_pZ$ such that $\{e_0,\ldots,e_7\}$ is a $\spin(7)$-basis.  Then, the second fundamental form at $p$, $h_{\af ij} = \ip{D_{e_i}e_j}{e_\af}$, satisfies the following relations.
    \begin{align*}
        h_{40i} + h_{51i} + h_{62i} + h_{73i} &= 0 ~,  & h_{41i} - h_{50i} - h_{63i} + h_{72i} &= 0 ~, \\
        h_{42i} + h_{53i} - h_{60i} - h_{71i} &= 0 ~,  & h_{43i} - h_{52i} + h_{61i} - h_{70i} &= 0
    \end{align*}
    for $i=0,1,2,3$.
\end{lem}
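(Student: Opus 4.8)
The plan is to mimic exactly the proof of Lemma \ref{lem_coass_h}, replacing the vector cross product on $\BR^7$ with the triple cross product on $\BR^8$. The key structural input is that the Cayley form $\Phi$ is parallel, hence so is the triple cross product $u\times v\times w$ defined by $\ip{z}{u\times v\times w} = \Phi(z,u,v,w)$; and that for a $\spin(7)$-basis the normal vectors can be recovered from the tangent vectors via this product. Concretely, I would first extend $e_0, e_1, e_2$ to a local orthonormal frame for $TZ$ near $p$, take $e_3$ to be the tangent field normalizing $\Phi(e_0,e_1,e_2,e_3)=1$, and then \emph{define} the normal frame by $e_{4+k} = $ (an appropriate triple cross product of tangent vectors), using the multiplication table of a $\spin(7)$-basis. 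Since $TZ$ is Cayley, $u\times v\times w\in T_pZ$ whenever $u,v,w\in T_pZ$; but the triple products that are \emph{perpendicular} to $T_pZ$ — e.g. $e_1\times e_2\times e_3$ or $e_0\times e_1\times e_2$ — will land in $N_pZ$ and, by the $\spin(7)$-basis relations, will be precisely $\pm e_4,\ldots,\pm e_7$. Because $Z$ is Cayley at every nearby point, this recipe produces a $\spin(7)$-basis at each point of a neighborhood of $p$, so all the relations below are frame identities, not just pointwise ones at $p$.

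The computational heart is then the same Leibniz-rule manipulation as in Lemma \ref{lem_coass_h}. Writing a generic normal vector as $e_\af = e_a\times e_b\times e_c$ for suitable tangent indices (with the $\spin(7)$ multiplication table dictating which triple), one computes
\begin{align*}
    h_{\af j i} &= \ip{D_{e_i}e_j}{e_\af} = -\ip{e_j}{D_{e_i}e_\af} = -\ip{e_j}{D_{e_i}(e_a\times e_b\times e_c)} \\
    &= -\ip{e_j}{(D_{e_i}e_a)\times e_b\times e_c + e_a\times(D_{e_i}e_b)\times e_c + e_a\times e_b\times(D_{e_i}e_c)}
\end{align*}
and then one moves each triple cross product back onto the other factor using the cyclic/antisymmetry identities $\ip{e_j}{u\times e_b\times e_c} = \pm\ip{u}{e_j\times e_b\times e_c}$ (read off from $\Phi$ being a $4$-form). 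Each resulting term $\ip{D_{e_i}e_a}{e_j\times e_b\times e_c}$ is again of the form $\pm h_{\beta a i}$ or $\pm h_{\gamma a i}$ for some index $\beta,\gamma$, after identifying $e_j\times e_b\times e_c$ with a basis vector via the multiplication table. Collecting the three terms yields one linear relation among the $h$'s; running through the four independent choices of $\af$ (equivalently, the four normal directions) gives the four displayed identities. Symmetry of $h$ in its last two indices ($h_{\af ij} = h_{\af ji}$, from $D$ being torsion-free and $e_j$ a coordinate-like tangent field extended as a frame) is used to get the stated symmetric form.

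The one genuine obstacle is bookkeeping: unlike $\BR^7$, where each normal vector is a single binary cross product of two tangent vectors, here every normal vector is a \emph{triple} product, so the Leibniz expansion has three terms rather than two, and one must be careful about the signs coming from the cyclic symmetry $\Phi(z,u,v,w)$ and from the particular $\spin(7)$ multiplication table fixed in Section \ref{sec_LA8}. The cleanest way to organize this is to work in the explicit coordinate $\spin(7)$-basis $\{\pl/\pl x^i, \pl/\pl y^i\}$: there the products $\pl/\pl x^1\times\pl/\pl x^2\times\pl/\pl x^3$, $\pl/\pl x^0\times\pl/\pl x^i\times\pl/\pl y^j$, etc., are computed once from \eqref{spin_form}, the resulting table is recorded, and the four relations then drop out by a finite but routine check — entirely parallel to the $\BR^7$ case, so the write-up can legitimately say ``the proof is very similar'' and display only one representative computation. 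No new idea beyond Lemma \ref{lem_coass_h} is needed; the content is that $\spin(7)$ acts on $\BR^8$ with the triple cross product playing the role that the binary cross product plays for $G_2$ on $\BR^7$.
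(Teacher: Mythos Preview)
Your overall framework---parallelism of the triple cross product, Leibniz rule, and antisymmetry of $\Phi$---is exactly right and matches the paper. But the specific recipe you propose contains a self-contradiction that makes it unworkable as written. You correctly recall that a $4$-plane $R$ is Cayley if and only if $u\times v\times w\in R$ for all $u,v,w\in R$; yet in the very next breath you propose to obtain the \emph{normal} vectors as triple products of \emph{tangent} vectors, citing $e_1\times e_2\times e_3$ and $e_0\times e_1\times e_2$ as examples landing in $N_pZ$. They do not: by the Cayley property, $e_1\times e_2\times e_3 = e_0$ and $e_0\times e_1\times e_2 = e_3$ (up to sign), both tangent. So the expression ``$e_\af = e_a\times e_b\times e_c$ for suitable tangent indices'' that you feed into the Leibniz computation is never a normal vector, and the argument stalls before it starts.

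The fix is small and is what the paper does: use that the orthogonal complement of a Cayley $4$-plane is again Cayley (in the standard basis, $\Phi(e_4,e_5,e_6,e_7)=1$), so each normal vector is a triple product of the \emph{other three normal} vectors, e.g.\ $e_4 = e_5\times e_6\times e_7$. Running your Leibniz manipulation on $h_{4ji} = \ip{D_{e_i}e_j}{e_5\times e_6\times e_7}$ then produces terms $\ip{e_5}{D_{e_i}(e_j\times e_6\times e_7)}$, etc., and now $e_j\times e_6\times e_7$ (one tangent, two normal) \emph{is} a tangent vector by the $\spin(7)$ table, so each term is a genuine second-fundamental-form component $h_{5ki}$, $h_{6ki}$, $h_{7ki}$. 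The four choices of $j$ give the four displayed relations. No frame extension via tangent triple products is needed; one simply extends $\{e_0,\ldots,e_7\}$ to a local $\spin(7)$-frame adapted to $Z$.
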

\begin{proof}
    For $i,j\in\{0,1,2,3\}$, we compute
    \begin{align*}
        h_{4ji} &= \ip{D_{e_i}e_j}{e_4} = \ip{D_{e_i}e_j}{e_5\times e_6\times e_7} \\
        &= - \ip{e_j}{(D_{e_i}e_5)\times e_6\times e_7} - \ip{e_j}{e_5\times(D_{e_i}e_6)\times e_7} - \ip{e_j}{e_5\times e_6\times(D_{e_i}e_7)} \\
        &= \ip{D_{e_i}e_5}{e_j\times e_6\times e_7} - \ip{D_{e_i}e_6}{e_j\times e_5\times e_7} + \ip{D_{e_i}e_7}{e_j\times e_5\times e_6} \\
        &= - \ip{e_5}{D_{e_i}(e_j\times e_6\times e_7)} + \ip{e_6}{D_{e_i}(e_j\times e_5\times e_7)} - \ip{e_7}{D_{e_i}(e_j\times e_5\times e_6)} ~,
    \end{align*}
    and this lemma follows.
\end{proof}

\section{Bernstein Theorem for Coassociative Graphs} \label{sec_Bernstein_coass}

\subsection{The Equation for a Minimal Graph}

Suppose that $\Sm^n\subset\BR^N$ is an oriented minimal submanifold, and $\Om$ is a parallel $n$-form on $\BR^N$.  It induces a function on $\Sm$, which will be denoted by $*\Om$.  If $\{e_1,\ldots,e_n\}$ is an oriented, orthonormal basis for $T_p\Sm$, $(*\Om)(p) = \Om(e_1,\ldots,e_n)$.  According to \cite{Wang02}*{Proposition 3.1}, it satisfies
\begin{align} \label{Dt_Om} \begin{split}
    \Dt(*\Om) &= -*\Om \cdot (\sum_{\af,i,j}h_{\af ij}^2) + 2\sum_{\af,\bt}\sum_k \left[ \Om(e_\af,e_\bt,e_3,\ldots,e_n) h_{\af 1k}h_{\bt 2k} \right. \\
    &\qquad\qquad \left. + \Om(e_\af,e_2,e_\bt,\ldots,e_n) h_{\af 1k}h_{\bt 3k} + \Om(e_1,\ldots,e_{n-2},e_\af,e_\bt) h_{\af(n-1)k}h_{\bt nk} \right]
\end{split} \end{align}
where $\Dt$ is the Laplacian of $\Sm$, $\{e_\af\}$ is a orthonormal frame for $N\Sm$, and $h_{\af ij} = \ip{D_{e_i}e_j}{e_\af}$ is the second fundamental form.

By \cite{Wang02}*{(3.1)}, the gradient of $*\Om$ is
\begin{align} \label{grad_Om}
    e_k(*\Om) &= \Om(e_\af,e_2\ldots,e_n) h_{\af 1k} + \cdots + \Om(e_1,\ldots,e_{n-1},e_\af) h_{\af nk} ~.
\end{align}

In the rest of this paper, $\Sm^n$ will always be graphical over some $\BR^n\subset\BR^N$, and $\om$ will be taken to be the parallel transport of the volume form of this $\BR^n$.

\subsection{The Equation for Coassociative Graphs}

If $X\subset\BR^7$ is coassociative and is graphical over $\BR^4\times\{0\}$, one can apply Lemma \ref{lem_coass_SVD} with $P = T_pX$ for every $p\in X$.  Together with Corollary \ref{cor_coass_SVD}, it associates a $G_2$-basis and singular values for each point $p\in X$, and we can evaluate the right hand side of \eqref{Dt_Om} and \eqref{grad_Om} at $p$ by using this basis.  Since $X$ is connected, the singular values must be either in $\CG_0$ or $\CG_+\cup\CG_-$.  Let $\Om = \dt\,\dd x^0\w\dd x^1\w\dd x^2\w\dd x^3$ where $\dt$ is defined by \eqref{dt_sign}.  It follows that $*\Om = \frac{1}{\sqrt{(1+\ld_1^2)(1+\ld_2^2)(1+\ld_3^3)}}$; \eqref{Dt_Om} and \eqref{grad_Om} read
\begin{align*}
    \begin{split}
        \Dt(*\Om) &= - (*\Om)\cdot \left[ (\sum_{\af,i,j}h_{\af ij}^2) - 2\ld_1\ld_2\sum_k(h_{41k}h_{52k} - h_{42k}h_{51k}) \right. \\
        &\qquad \left. - 2\ld_2\ld_3\sum_k(h_{52k}h_{63k} - h_{53k}h_{62k}) - 2\ld_3\ld_1\sum_k(h_{41k}h_{63k} - h_{43k}h_{61k}) \right] ~,
    \end{split} \\
    e_k(*\Om) &= - \dt(*\Om)\cdot \sum_{k} (\ld_1h_{41k} + \ld_2h_{52k} + \ld_3h_{63k}) ~.
\end{align*}
Therefore,
\begin{align}
    \Dt(\log(*\Om)^{-1}) &= \frac{-(*\Om)\Dt(*\Om) + |\nabla(*\Om)|^2}{(*\Om)^2} \notag \\
    \begin{split} \label{log_Om_coass}
        &= \sum_{\af,i,j}h_{\af ij}^2 + \sum_k \left( \ld_1^2h_{41k}^2 + \ld_2^2h_{52k} + \ld_3^2h_{63k}^2 \right. \\
        &\qquad\qquad \left. + 2\ld_1\ld_2h_{42k}h_{51k} + 2\ld_2\ld_3h_{53k}h_{62k} + 2\ld_3\ld_1h_{43k}h_{61k} \right)
    \end{split}
\end{align}

According to Lemma \ref{lem_coass_h}, the second fundamental form has only $15$ degrees of freedom, and are grouped as follows.
\begin{description}
    \item[Group 0] $h_{423}$, $h_{531}$ and $h_{612}$.  Note that $h_{401} = h_{612} - h_{531}$, $h_{502} = h_{423}-h_{612}$, and $h_{603} = h_{531} - h_{423}$.
    \item[Group 1] $h_{503}$, $h_{512}$, $h_{602}$ and $h_{631}$.  Note that $h_{400} = -h_{503} + h_{602}$, $h_{411} = -h_{512} - h_{631}$, $h_{422} = h_{512} - h_{602}$, and $h_{433} = h_{503} + h_{631}$.
    \item[Group 2] $h_{601}$, $h_{623}$, $h_{403}$ and $h_{412}$.  Note that $h_{500} = -h_{601} + h_{403}$, $h_{511} = h_{601} + h_{412}$, $h_{522} = -h_{623} - h_{412}$, and $h_{533} = h_{623} - h_{403}$.
    \item[Group 3] $h_{402}$, $h_{431}$, $h_{501}$ and $h_{523}$.  Note that $h_{600} = -h_{402} + h_{501}$, $h_{611} = h_{431} - h_{501}$, $h_{622} = h_{402} + h_{523}$, and $h_{633} = -h_{431} - h_{523}$.
\end{description}
For $i = 0,\ldots,3$, let $\hat{\bh}_i$ be the column vectors:
\begin{align*}
    \hat{\bh}_0 &= (h_{423}, h_{531}, h_{612}) ~,  & \hat{\bh}_1 &= (h_{503}, h_{512}, h_{602}, h_{631}) ~, \\
    \hat{\bh}_2 &= (h_{601}, h_{623}, h_{403}, h_{412}) ~,  & \hat{\bh}_3 &= (h_{402}, h_{431}, h_{501}, h_{523}) ~.
\end{align*}
It follows that \eqref{log_Om_coass} becomes
\begin{align} \label{log_Om_coass1} \begin{split}
    \Dt(\log(*\Om)^{-1}) &= (\hat{\bh}_0)^T \,\bL_0(\ld_1,\ld_2,\ld_3)\, \hat{\bh}_0 + (\hat{\bh}_1)^T \,\bL(\ld_1,\ld_2,\ld_3)\, \hat{\bh}_1 \\
    &\quad + (\hat{\bh}_2)^T \,\bL(\ld_2,\ld_3,\ld_1)\, \hat{\bh}_2 + (\hat{\bh}_3)^T \,\bL(\ld_3,\ld_1,\ld_2)\, \hat{\bh}_3 ~,
\end{split} \end{align}
where
\begin{align} \label{quadr_coass0}
    \bL_0(\ld,\mu,\nu) = \begin{bmatrix}
        6+\mu^2+\nu^2 & -2+\ld\mu-\nu^2 & -2+\nu\ld-\mu^2 \\ -2+\ld\mu-\nu^2 & 6+\nu^2+\ld^2 & -2+\mu\nu-\ld^2 \\ -2+\nu\ld-\mu^2 & -2+\mu\nu-\ld^2 & 6+\ld^2+\mu^2
    \end{bmatrix}
\end{align}
and
\begin{align} \label{quadr_coass1}
    \bL(\ld,\mu,\nu) = \begin{bmatrix}
        4 & 0 & -1+\mu\nu & 1+\nu\ld \\ 0 & 4+(\ld+\mu)^2 & -1-\ld\mu & 1+\ld^2 \\ -1+\mu\nu & -1-\ld\mu & 4 & 0 \\ 1+\nu\ld & 1+\ld^2 & 0 & 4+(\ld+\nu)^2
    \end{bmatrix} ~.
\end{align}

An observation is that the positivity of these matrices on $\CG$ implies the boundedness of $|\ld_i|$'s.
\begin{lem} \label{lem_coass_bdd_ld}
    The subset of $\CG$ where $\bL_0(\ld_1,\ld_2,\ld_3)$, $\bL(\ld_1,\ld_2,\ld_3)$, $\bL(\ld_2,\ld_3,\ld_1)$ and $\bL(\ld_3,\ld_1,\ld_2)$ are all positive semi-definite is bounded.
\end{lem}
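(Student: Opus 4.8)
The plan is to show that if $|\ld_i|\to\infty$ along some sequence of points in $\CG$, then at least one of the four quadratic forms fails to be positive semi-definite, by exhibiting explicit test vectors on which the form becomes negative. Recall from Lemma \ref{lem_SVD_3d} that on $\CG$ we have $\ld_1+\ld_2+\ld_3 = \ld_1\ld_2\ld_3$, and that on $\CG_0$ one has $\ld_i\ld_j < 1$ while on $\CG_+\cup\CG_-$ one has $\ld_i\ld_j > 1$. The constraint is what makes boundedness plausible: if, say, $\ld_1$ is large, then either another $\ld_j$ is comparably large (so their product dominates), or the relation forces delicate cancellation.

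First I would treat the component $\CG_0$. Here the key is the off-diagonal entry $-1+\mu\nu$ of $\bL(\ld,\mu,\nu)$ together with the diagonal entries both equal to $4$: the $2\times 2$ principal minor on the first and third coordinates of $\bL(\ld_1,\ld_2,\ld_3)$ is $16 - (\ld_2\ld_3-1)^2$, which becomes negative once $|\ld_2\ld_3| > 5$, i.e. once $\ld_2\ld_3 < -4$ (since $\ld_2\ld_3<1$ on $\CG_0$). Cycling, positive semi-definiteness of all three $\bL$'s on $\CG_0$ forces $\ld_i\ld_j \geq -4$ for every pair. Combined with $\ld_i\ld_j<1$, all three products $\ld_i\ld_j$ are confined to $[-4,1)$, and since $(\ld_1\ld_2)(\ld_2\ld_3)(\ld_3\ld_1) = (\ld_1\ld_2\ld_3)^2 = (\ld_1+\ld_2+\ld_3)^2 \geq 0$ while each factor is bounded, we also bound $(\ld_1\ld_2\ld_3)^2$, hence $|\ld_1+\ld_2+\ld_3|$; together with bounded pairwise products this bounds each $|\ld_i|$ (e.g. $\ld_i^2 \leq (\ld_i\ld_j)(\ld_i\ld_k)/(\ld_j\ld_k)$ when $\ld_j\ld_k\neq 0$, and the degenerate cases are handled directly). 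I would also double-check whether $\bL_0$ gives an even cleaner bound on $\CG_0$: its $(1,2)$-minor is $(6+\mu^2+\nu^2)(6+\nu^2+\ld^2) - (-2+\ld\mu-\nu^2)^2$, and one examines whether large $\nu^2$ with $\ld\mu$ of order $\nu^2$ can make this negative — but the $\bL$-argument already suffices, so $\bL_0$ need only be used if a pair like $\ld_2\ld_3$ with a large common factor slips through, which it does not.

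Next, on $\CG_+$ (and by symmetry $\CG_-$), all $\ld_i$ are positive with $\ld_i\ld_j>1$, and Lemma \ref{lem_SVD_3d}(iii) already gives $\sm_2(\bld)\geq 9$; but I need an upper bound. Here the relevant entry is $4+(\ld+\mu)^2$ versus the off-diagonal $-1-\ld\mu$: the $2\times 2$ minor on the second and fourth coordinates of $\bL(\ld_1,\ld_2,\ld_3)$ is $(4+(\ld_1+\ld_2)^2)(4+(\ld_1+\ld_3)^2) - (1+\ld_1^2)^2$, which is actually positive for all real values, so instead I would use the minor mixing a "$4$" row with a "$4+(\cdot)^2$" row, e.g. coordinates $1$ and $2$: $4(4+(\ld_1+\ld_2)^2) - (1+\ld_2\ld_3)^2$. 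On $\CG_+$ with $\ld_1,\ld_2$ bounded but $\ld_3$ large this forces $\ld_2\ld_3$ bounded, contradiction; chasing the cases (which index is large) through the three cyclic copies of $\bL$ forces every $\ld_i$ bounded on $\CG_+$. The main obstacle is the bookkeeping: one must be sure the chosen principal minors cover every way a subset of the $\ld_i$ can blow up, using the constraint to rule out the "all large" scenario on $\CG_0$ (where a large product is impossible since all products are $<1$) and to handle the "one large, others moderate" scenario on $\CG_+$. A clean way to organize it is: suppose for contradiction there is a sequence $\bld^{(n)}\in\CG$ with $\max_i|\ld_i^{(n)}|\to\infty$ on which all four matrices stay positive semi-definite; pass to a subsequence so that the ratios $\ld_i^{(n)}/\max_j|\ld_j^{(n)}|$ converge; and read off from the dominant terms of the relevant $2\times 2$ minors (after dividing by the appropriate power of the max) a strictly negative limit, the contradiction. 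I expect this limiting/scaling argument to be the cleanest route and the case analysis over which coordinates are "dominant" to be the only real work.
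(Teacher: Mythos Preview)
Your overall plan is sound and close to the paper's, but there is a concrete computational slip on $\CG_+$ and an unnecessary detour.

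On $\CG_0$ you use the $(1,3)$ principal minor of $\bL(\ld,\mu,\nu)$, which is $16-(\mu\nu-1)^2$; semidefiniteness gives $-3\le\mu\nu\le 5$ (not $-4$: the condition is $|\mu\nu-1|\le 4$, not $|\mu\nu|\le 5$). Cycling through the three $\bL$'s bounds all pairwise products in $[-3,1)$, and then the constraint $\sm_1=\sm_3$ rules out any $|\ld_i|\to\infty$ (since that would force the other two $\ld_j\to 0$, hence $\sm_3\to 0$ while $\sm_1\to\pm\infty$). This works; your formula $\ld_i^2=(\ld_i\ld_j)(\ld_i\ld_k)/(\ld_j\ld_k)$ is not needed and is dangerous near $\ld_j\ld_k=0$. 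This is a slightly different route from the paper, which on $\CG_0$ instead uses $\det\bL_0\ge 0$: writing $\det\bL_0$ in terms of $\sm_1,\sm_2$ (with $\sm_1=\sm_3$) and combining with $\sm_2\le 0$ and $3\sm_1^2\le\sm_2^2$ from Lemma~\ref{lem_sms_3D} bounds $\sm_2$ from below and hence $|\bld|^2=\sm_1^2-2\sm_2$. Your $2\times2$-minor argument is more elementary; the paper's gives a sharper explicit constant.

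On $\CG_+$ your specific computation is wrong: the $(1,2)$ entry of $\bL(\ld,\mu,\nu)$ is $0$, so the minor on coordinates $1$ and $2$ is just $4\bigl(4+(\ld+\mu)^2\bigr)$, not $4\bigl(4+(\ld_1+\ld_2)^2\bigr)-(1+\ld_2\ld_3)^2$. You do not need a new minor at all: the \emph{same} $(1,3)$ minor you used on $\CG_0$ gives $\ld_2\ld_3\le 5$, and cycling gives $\ld_i\ld_j\le 5$ for every pair. Since all $\ld_i>0$ on $\CG_+$, the constraint yields
\[
\sm_1=\sm_3=\sqrt{(\ld_1\ld_2)(\ld_2\ld_3)(\ld_3\ld_1)}\le 5\sqrt{5},
\]
which bounds each $\ld_i$. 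This is exactly the paper's argument on $\CG_+\cup\CG_-$, so the limiting/rescaling fallback you sketch at the end is unnecessary.
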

\begin{proof}
    If $(\ld_1,\ld_2,\ld_3)\in\CG_0$, it follows from Lemma \ref{lem_SVD_3d} (iii) that $\sm_2 \leq 0$, and from Lemma \ref{lem_sms_3D} (ii) that $3(\sm_1)^2 \leq (\sm_2)^2$.  By using $\sm_1 = \sm_3$,
    \begin{align*}
        0 &\leq \det(\bL_0(\ld_1,\ld_2,\ld_3)) \\
        &= 4 ((\sm_2)^3 - 4(\sm_2)^2 - 8\sm_2 + 32) + (\sm_1)^2 (- (\sm_2)^2 + 6\sm_2 + 31) \\
        &\leq 4 (\sm_2)^3 - (16-\frac{31}{3})(\sm_2)^2 - 32\sm_2 + 128 ~.
    \end{align*}
    It follows that $\sm_2 < -4$, and $(\sm_1)^2 < \frac{16}{3}$.  Thus, $\ld_1^2+\ld_2^2+\ld_3^2 = (\sm_1)^2 - 2\sm_2 < \frac{40}{3}$.

    If $(\ld_1,\ld_2,\ld_3)\in\CG_+$, consider the sub-block of $\bL(\ld_1,\ld_2,\ld_3)$ consisting of the first and third columns-rows.  Its semi-positivity implies that $\ld_1\ld_2 \leq 5$, and \begin{align*}
        \ld_1+\ld_2+\ld_3 = \sqrt{(\ld_1\ld_2)(\ld_2\ld_3)(\ld_3\ld_1)} \leq 5\sqrt{5} ~.
    \end{align*}
    For $\CG_-$, it follows by reflection.
\end{proof}

The above discussion can be summarized as the following proposition.
\begin{prop} \label{prop_coass_Bernstein}
    Let $X\subset\BR^7$ be coassociative submanifold that is entire graphical over $\BR^4\times\{0\}$.  Suppose that its singular values $(\ld_1,\ld_2,\ld_3)$ belongs to a subset
    \begin{itemize}
        \item which is invariant under the action of $S_3\times\{\pm1\}$ (relabeling $\ld_i$'s and flipping the signs),
        \item and over which $\bL_0 \geq \vep\,\bfI_3$ and $\bL \geq \vep\,\bfI_4$ for some $\vep>0$.
    \end{itemize}
    Then, $X$ must be affine.
\end{prop}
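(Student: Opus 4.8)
The plan is to show that $\log(*\Om)^{-1}$ is a bounded, subharmonic function on the complete minimal submanifold $X$, and then to invoke the standard argument that a bounded subharmonic function on a complete manifold with at most Euclidean volume growth (which a minimal graph enjoys) must be constant; constancy of $*\Om$ then forces the second fundamental form to vanish by the strict positivity hypothesis, so $X$ is affine.

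First I would record that, by Lemma \ref{lem_coass_SVD} and Corollary \ref{cor_coass_SVD}, at each $p\in X$ the tangent plane $T_pX$ gives well-defined singular values $(\ld_1,\ld_2,\ld_3)\in\CG$ together with a $G_2$-basis, and that the hypothesis places these singular values in an $S_3\times\{\pm1\}$-invariant set on which $\bL_0\geq\vep\bfI_3$ and $\bL\geq\vep\bfI_4$. By Lemma \ref{lem_coass_bdd_ld} such a set is bounded, so $|\ld_i|\leq C$ uniformly on $X$; consequently $*\Om = \big((1+\ld_1^2)(1+\ld_2^2)(1+\ld_3^2)\big)^{-1/2}$ is bounded below by a positive constant, and $\log(*\Om)^{-1}$ is a bounded function on $X$. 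Next, identity \eqref{log_Om_coass1} together with the positivity hypothesis gives
\begin{align*}
    \Dt\big(\log(*\Om)^{-1}\big) \;\geq\; \vep\,\big(|\hat{\bh}_0|^2 + |\hat{\bh}_1|^2 + |\hat{\bh}_2|^2 + |\hat{\bh}_3|^2\big) \;\geq\; c\,|h|^2
\end{align*}
for some $c>0$ depending on $\vep$ and $C$, because the $15$ quantities in $\hat{\bh}_0,\ldots,\hat{\bh}_3$ control the full second fundamental form via Lemma \ref{lem_coass_h} and the remaining components are fixed linear combinations of them. In particular $\log(*\Om)^{-1}$ is subharmonic.

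Then I would use that a minimal graph over $\BR^4$ is complete and has Euclidean volume growth, so one may apply the mean value inequality / parabolicity argument (as in Wang's work on Bernstein theorems, cf. \cite{Wang02}): a bounded subharmonic function on such a manifold is constant. Alternatively, one can test the differential inequality against a cutoff $\eta$ supported on the intrinsic ball of radius $R$, integrate $\int_X \Dt(\log(*\Om)^{-1})\,\eta^2 \geq c\int_X |h|^2\eta^2$, move the Laplacian onto $\eta^2$, and use $|\nabla \log(*\Om)^{-1}|\leq C|h|$ (from \eqref{grad_Om}) together with $\|\log(*\Om)^{-1}\|_{L^\infty}<\infty$ and the volume bound to absorb terms and conclude $\int_X|h|^2 = 0$ as $R\to\infty$. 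Either way, $h\equiv 0$, so $X$ is a piece of a plane, and being entire it is an affine $4$-plane.

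The main obstacle is the completeness/volume-growth input needed to run the Liouville-type step: one must confirm that an entire coassociative graph, being a complete minimal submanifold of $\BR^7$ with bounded slope (the $|\ld_i|$ are bounded by Lemma \ref{lem_coass_bdd_ld}), indeed has at most Euclidean volume growth so that the bounded-subharmonic-implies-constant principle applies — this is where the cutoff estimate must be made quantitative, and it is the one place the argument cannot be reduced to the pointwise algebra of \eqref{log_Om_coass1}. Everything else is bookkeeping: expanding \eqref{Dt_Om} and \eqref{grad_Om} in the $G_2$-basis, verifying \eqref{log_Om_coass1}--\eqref{quadr_coass1}, and checking that the gradient bound $|\nabla(*\Om)|\leq C|h|\,(*\Om)$ holds with the stated constants.
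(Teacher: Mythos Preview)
Your pointwise algebra is fine: the boundedness of $\log(*\Om)^{-1}$ via Lemma \ref{lem_coass_bdd_ld}, the inequality $\Dt(\log(*\Om)^{-1}) \geq c|h|^2$ from \eqref{log_Om_coass1}, and the gradient bound $|\nabla\log(*\Om)^{-1}| \leq C|h|$ from \eqref{grad_Om} are all correct. The gap is exactly where you flagged it, but it is not a matter of \emph{verifying} Euclidean volume growth --- the growth is there --- it is that the principle you want to invoke is \emph{false} in dimension $4$. A complete $4$-manifold with Euclidean volume growth is non-parabolic (already $\BR^4$ admits non-constant bounded subharmonic functions, e.g.\ $\max\{-|x|^{-2},-1\}$), so ``bounded subharmonic $\Rightarrow$ constant'' does not hold. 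Your cutoff computation, done honestly, yields only
\[
    c\int |h|^2\eta^2 \;\leq\; \frac{c}{2}\int |h|^2\eta^2 + C'\int|\nabla\eta|^2 \;\leq\; \frac{c}{2}\int |h|^2\eta^2 + C'' R^{2},
\]
so $\int_{B_{R/2}}|h|^2 \leq C'' R^2$, which does not force $h\equiv0$. The logarithmic cutoff trick only closes this gap in dimension $2$.

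The paper (following \cites{Wang03,TW02}, which do \emph{not} use a direct Liouville argument) instead exploits that the hypotheses are scale-invariant: blow $X$ down to a coassociative cone $K_\infty$, and on the smooth part of $K_\infty$ the same inequality $\Dt\log(*\Om)^{-1}\geq c|h|^2$ holds. Now $\log(*\Om)^{-1}$ is a bounded subharmonic function that is \emph{homogeneous of degree zero}, hence lives on the compact link, where the maximum principle forces it constant and thus $K_\infty$ flat; Allard's regularity theorem then gives that $X$ itself is affine. If $K_\infty$ is singular away from the origin one first runs Federer dimension reduction, landing on a lower-dimensional special Lagrangian cone to which the same reasoning applies. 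The compactness of the link is precisely what replaces the parabolicity you are missing.
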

\begin{proof}
    The proof is identical to that in \cites{Wang03,TW02}.  It is included for the convenience of the reader.  Notice that being calibrated and the singular values are invariant under scaling.  By Lemma \ref{lem_coass_bdd_ld}, the gradient is bounded, and one can perform blow-down to obtain a coassociative cone $K_{\infty}$.  Suppose that $K_{\infty}$ has singularity only at the origin.  The assumption and \eqref{log_Om_coass1} imply that $\Dt \log(*\Om)^{-1}\geq \vep' |A_{{K}_{\infty}}|^2$ for some $\vep' > 0$ on ${K}_{\infty}\setminus\{\text{origin}\}$.  By the maximum principle, ${K}_{\infty}$ is flat.  It follows from Allard's Regularity Theorem \cite{A72} that $X$ is affine.
    
    If ${K}_{\infty}$ has singularity other than the origin, one can perform blow-up and apply Federer's dimension reduction \cite{F70}, \cite{F69}*{ch.4.3 and ch.5.4} to get a minimal cone $K$ of smaller dimension.  By repeating the procedure if necessary, the cone $K$ has singularity only at the origin.  It is graphical over some $\BR^k$, and let $\td{\Om}$ be the parallel transport of the volume form of $\BR^k$.  It is not hard to derive from the condition of this proposition that on $K$, $\Dt \log(*\td{\Om})^{-1}\geq \vep' |A_K|^2$.  Thus, $K$ is flat, and ${K}_{\infty}$ has singularity only at the origin.

    In fact, $K$ is a {special Lagrangian} cone.  For instance, suppose that $K$ is $3$-dimensional, $K\times\BR u$ is coassociative for some unit vector $u\perp K$, and $K\times\BR u$ is graphical over $\BR^4\times\{0\}$.  Lemma \ref{lem_coass_SVD} leads to the following dichotomy: after a $G_2$ change of coordinate, $u$ is either $\frac{\pl}{\pl x^0}$, or $\frac{1}{\sqrt{1+\ld_3^2}}(\frac{\pl}{\pl x^3} + \ld_3\frac{\pl}{\pl y^3})$ for some constant $\ld_3\neq0$.  If $u = \frac{\pl}{\pl x^0}$, $K$ is calibrated by
    \begin{align*}
        \iota(\frac{\pl}{\pl x^0})\psi &= \dd x^{123} - \dd y^{23}\w\dd x^1 - \dd y^{31}\w\dd x^2 - \dd y^{12}\w\dd x^3 \\
        &= \re\left[(\dd x^1+\sqrt{-1}\dd y^1)\w(\dd x^2+\sqrt{-1}\dd y^2)\w(\dd x^3+\sqrt{-1}\dd y^3)\right] ~.
    \end{align*}
    In the second case, the projections of $u$ onto $\BR^4\times\{0\}$ and $\{0\}\times\BR^3$ pick up $\frac{\pl}{\pl x^3}$ and $\frac{\pl}{\pl y^3}$, and their cross product picks up $\frac{\pl}{\pl x^0}$.  It follows that $K = K'\times\BR\frac{\pl}{\pl x^0}$, and $K'$ is calibrated by
    \begin{align*}
        \iota(\frac{\pl}{\pl x^0})\iota(u)\psi &= \re\left[e^{\sqrt{-1}(\pi+\arctan\ld_3)}(\dd x^1+\sqrt{-1}\dd y^1)\w(\dd x^2+\sqrt{-1}\dd y^2))\right] ~.
    \end{align*}
    In either case, the calculation of $\log(*\td{\Om})^{-1}$ is derived in \cite{TW02}, and is formally the reduction of \eqref{log_Om_coass1}.
\end{proof}

\subsection{Bernstein Polynomials and a Bernstein Theorem}

The main purpose of this section is to identify a specific region where the assumption of Proposition \ref{prop_coass_Bernstein} holds true.  The verification will be reduced to show that a single variable polynomial is positive or non-negative on a closed interval.  There are many ways to do it, and we will present it in terms of a theorem of Bernstein.

Recall that the $m+1$ Bernstein basis polynomials of degree $m$ are defined as
\begin{align} \label{Bernstein_poly}
    \fb_{k,m}(t) &= \binom{m}{k} t^k(1-t)^{m-k}
\end{align}
for $k = 0,\ldots,m$.  Any polynomial of degree no greater than $m$ can be represented uniquely as their linear combination:
\begin{align*}
    \fp(t) &= \sum_{k=0}^m c_{k,m}\,\fb_{k,m}(t) ~.
\end{align*}
A theorem of Bernstein says that a polynomial $\fp(t)$ is non-negative on $[0,1]$ if and only if there exists some $m>\!>1$ such that $c_{k,m} \geq 0$ for all $k$.  It follows that the positivity on $[0,1]$ corresponds to $c_{0,m}>0$, $c_{m,m}>0$, and $c_{k,m} \geq 0$ for all $k$.  This theorem is not constructive in the sense that $m$ may have be chosen to be quite large.  See \cite{PR00} and the references therein for the discussions about this theorem, and an upper bound on $m$.

\begin{defn}
    A polynomial $\fp(t)$ is said to be \emph{positive on $[a,b]$ of Bernstein degree $m$} if
    \begin{align*}
        \fp(t) &= \sum_{k=0}^m c_{k,m}\,\fb_{k,m}(\frac{t-a}{b-a})
\end{align*}
with $c_{0,m}>0$, $c_{m,m}>0$, and $c_{k,m} \geq 0$ for all $k$.
\end{defn}
For example,
\begin{align*}
    5 t^2 - 3 t + 1 &= 3 t^2 - t (1 - t) + (1 - t)^2 \\
    &= 3 t^3 + 2 t^2 (1 - t) + (1 - t)^3 
\end{align*}
is positive on $[0,1]$ of Bernstein degree $3$, but not of Bernstein degree $2$.  Here is a polynomial which we will encounter momentarily:
\begin{align}
    t^4 - 2t^3 - 12t^2 - 16t + 20 &= \frac{1520}{2401} (\oh - t)^4 + \frac{144}{2401} (\oh - t)^3 (3 + t) + \frac{3072}{2401} (\oh - t)^2 (3 + t)^2 \notag \\
    &\quad + \frac{2188}{2401} (\oh - t) (3 + t)^3 + \frac{141}{2401} (3 + t)^4 ~; \label{Bernstein_poly_eg}
\end{align}
it is positive on $[-3,\oh]$ of Bernstein degree $4$.  
With $[a,b]$ and $m$, one only has to solve a rank $m+1$ linear system for the coefficients.  The coefficients are by no means enlightening, and will not be written down in the rest of this paper.

We are now ready to prove the main result of this section.
\begin{thm} \label{thm_Bernstein_coass}
    Let $X\subset\BR^7$ be coassociative submanifold that is entire graphical over $\BR^4\times\{0\}$.  Suppose either one of the following holds:
    \begin{enumerate}
        \item Its singular values $(\ld_1,\ld_2,\ld_3)$ belong $\CG_0$ and satisfy $\ld_1\ld_2+\ld_2\ld_3+\ld_3\ld_1\geq -2\sqrt{2} + \vep$ for some $0<\vep<\!<1$,
        \item Let $\tau$ be the largest real root\footnote{It has four real roots and two complex roots.  Its discriminant is negative.  Its roots lie within $(4,4\oh)$, $(2,2\oh)$, $(1\oh,2)$ and $(-3,-2\oh)$.} of $t^6 - 6 t^5 + 6 t^4 + 16 t^3 - 77 t^2 + 204 t - 192$.  The singular values of $X$ belongs $\CG_+\cup\CG_-$ and satisfy $\ld_i\ld_j\leq \tau - \vep$ for any $i\neq j$ (for some $0<\vep<\!<1$).
    \end{enumerate}
    Then, $X$ must be affine.
    In particular, if $1<\ld_i\ld_j\leq 4$, $X$ is affine.
\end{thm}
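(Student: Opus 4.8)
The plan is to invoke Proposition \ref{prop_coass_Bernstein}: it suffices to exhibit, in each of the two cases, an $S_3\times\{\pm1\}$-invariant subset of $\CG$ containing the hypothesized singular values, over which all four matrices $\bL_0(\ld_1,\ld_2,\ld_3)$, $\bL(\ld_1,\ld_2,\ld_3)$, $\bL(\ld_2,\ld_3,\ld_1)$, $\bL(\ld_3,\ld_1,\ld_2)$ are bounded below by $\vep\,\bfI$ for some $\vep>0$. The subsets will be $\{\bld\in\CG_0 : \sm_2(\bld)\geq -2\sqrt{2}+\vep\}$ in case (i) and $\{\bld\in\CG_+\cup\CG_- : \ld_i\ld_j\leq\tau-\vep \text{ for all }i\neq j\}$ in case (ii); both are manifestly invariant under relabeling and global sign flip. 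By a compactness argument (using Lemma \ref{lem_coass_bdd_ld}, or directly: on $\CG_0$ the condition $\sm_2\geq-2\sqrt2+\vep$ together with Lemma \ref{lem_sms_3D} already bounds $\ld_1^2+\ld_2^2+\ld_3^2$; on $\CG_+\cup\CG_-$ the bound $\ld_i\ld_j\leq\tau-\vep$ bounds $\sm_1 = \sqrt{(\ld_1\ld_2)(\ld_2\ld_3)(\ld_3\ld_1)}$ hence all $\ld_i$) the set in question is compact, so it is enough to prove strict positive-definiteness pointwise; the uniform $\vep$ then comes for free.

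First I would handle $\bL_0$. Using $\sm_1 = \sm_3$ on $\CG$, the entries of $\bL_0$ are symmetric functions of $\bld$, and one computes its three leading principal minors. The $1\times1$ minor $6+\mu^2+\nu^2$ is obviously positive. The $2\times2$ minor and $\det\bL_0$ should reduce, after substituting $\sm_1 = \sm_3$, to polynomials in the single variable $\sm_2$ (together with $\sm_1^2$, which is controlled by $3\sm_2\leq\sm_1^2$ from Lemma \ref{lem_sms_3D}(i) on the relevant branch); in case (i), $\sm_2\in[-2\sqrt2,0]$ on the relevant locus by Lemma \ref{lem_SVD_3d}(iii) and the hypothesis, and in case (ii), $\sm_2\geq 9$. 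The claim is that each minor is positive on the respective $\sm_2$-interval, which by Sylvester's criterion gives $\bL_0>0$. The endpoint $-2\sqrt2$ is presumably where some minor of $\bL_0$ degenerates — that is what singles out the constant $2\sqrt2$ in Theorem A.

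The harder part is the three $\bL$-matrices, and this is where the Bernstein-polynomial machinery enters. For $\bL(\ld,\mu,\nu)$ I would again compute the leading principal minors $\Delta_1 = 4$, $\Delta_2$, $\Delta_3$, $\Delta_4 = \det\bL$. These are genuinely polynomials in $\ld,\mu,\nu$, but on $\CG$ one can eliminate one variable via $\sm_1 = \sm_3$ and reparametrize; the goal in each case is to bound the minimum of each minor over the (compact) admissible region from below by a positive constant. In case (ii) the hypothesis $\ld_i\ld_j\leq\tau-\vep$ should let one reduce each minor-positivity to: a certain univariate polynomial in, say, $p = \ld_i\ld_j$ is positive on an interval ending at $\tau$ — and $\tau$ is exactly the largest root of the stated degree-$6$ polynomial, so the critical minor vanishes there. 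One then certifies positivity of that univariate polynomial on the closed interval by exhibiting nonnegative Bernstein coefficients (of sufficiently high Bernstein degree), exactly as illustrated by the worked example \eqref{Bernstein_poly_eg}; in case (i) the relevant certificate is that very polynomial $t^4-2t^3-12t^2-16t+20$, positive on $[-3,\tfrac12]$. I expect the main obstacle to be the bookkeeping: reducing each $4\times4$ determinant, after imposing $\sm_1 = \sm_3$, to a univariate inequality in a clean enough form that a low-Bernstein-degree nonnegativity certificate exists, and checking that the worst case over the remaining free parameters is attained on the boundary $\ld_i\ld_j = \tau$ (or $\sm_2 = -2\sqrt2$). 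Finally, the stated corollary "$1<\ld_i\ld_j\leq 4$ implies $X$ affine" follows because $1<\ld_i\ld_j$ forces $\bld\in\CG_+\cup\CG_-$ (Lemma \ref{lem_SVD_3d}(ii)), and $4 < \tau$ (since $\tau\in(4,4\tfrac12)$ by the footnote), so $\ld_i\ld_j\leq 4 < \tau$ puts us in case (ii) with room to spare.
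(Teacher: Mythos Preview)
Your overall architecture is right: invoke Proposition \ref{prop_coass_Bernstein} on the indicated $S_3\times\{\pm1\}$-invariant compact subsets, and reduce to positive-definiteness of $\bL_0$ and $\bL$ there. Your derivation of the corollary is also correct. But two points diverge from the paper, one a genuine simplification you are missing and one a place where your sketch is too optimistic.

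First, you plan to use Sylvester's criterion and hence must control all leading principal minors of $\bL$. The paper avoids this: it observes that $\bL_0$ and $\bL$ are positive definite at the reference point ($\bld=(0,0,0)$ on $\CG_0$, $\bld=(\sqrt3,\sqrt3,\sqrt3)$ on $\CG_+$), and that the admissible region is \emph{connected}. Hence, if one proves only that $\det\bL_0$ and $\det\bL$ stay bounded away from zero on that region, continuity of eigenvalues forces all eigenvalues to remain positive throughout. This cuts the work to two determinants instead of several minors.

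Second, your expectation that the $\bL$-determinant collapses to a univariate polynomial is not borne out; $\bL(\ld,\mu,\nu)$ is not symmetric in its arguments, so symmetric-function reduction does not apply. The paper's actual mechanism is a two-variable analysis. On $\CG_0$ it writes $\det\bL\cdot(1-\ld_1\ld_2)^2$ minus a manifestly nonnegative term as a function $\fl_1(s,t)$ with $s=-\sm_2$, $t=\ld_1\ld_2$; the example polynomial \eqref{Bernstein_poly_eg} is not the final certificate but the proof that the $s^2$-coefficient of $\fl_1$ is \emph{negative}, forcing the minimum in $s$ to the boundary of the triangle $\{0\le s\le 2\sqrt2,\ s+t\ge0,\ t\le\tfrac12\}$. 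Only then do three separate univariate Bernstein checks (along $s=0$, $s=-t$, $s=2\sqrt2$) close case (i). Case (ii) is handled analogously in coordinates $t=\ld_1\ld_2$, $u=(\ld_1-\ld_2)^2$, with a further case split $t\le4$ versus $4<t\le\tau-\vep$; in the latter subcase one shows monotonicity in both variables so that the minimum is attained at the corner $(t,u)=(\tau-\vep,\,\tfrac{(\tau-\vep)(\tau-\vep-3)^2}{\tau-\vep-2})$, whose value is exactly (a positive multiple of) the defining sextic for $\tau$ evaluated at $\tau-\vep$. So the constant $\tau$ emerges from this corner evaluation, not directly as a root of a principal minor.
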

\begin{proof}
    \textit{Step 1: the strategy}. Denote the subset of $\CG$ defined by these conditions by $\CP_0^\vep$, $\CP_+^
    \vep$ and $\CP_-^\vep$, respectively.  It is not hard to see that these sets are connected and bounded.  The set $\CP_0^\vep$ contains $(0,0,0)$, at which both $\bL_0$ and $\bL$ are positive definite.  Suppose that $\det(\bL_0)$ and $\det(\bL)$ are both greater than $\vep'$ on $\CP_0^\vep$, for some $\vep'>0$.  It follows from the continuity of eigenvalues and connectedness of $\CP_0^\vep$ that $\bL_0$ and $\bL$ are positive definite on $\CP_0^\vep$.  The boundedness of $\CP_0^\vep$ gives a uniform upper bound of the maximum eigenvalues of $\bL_0$ and $\bL$.  It together with the lower bound of their determinants leads to a uniform lower bound on the minimum eigenvalues.  Hence, $\bL_0$ and $\bL$ are both strictly positive definite on $\CP_0^\vep$, and this theorem follows from Proposition \ref{prop_coass_Bernstein}.

    The arguments for $\CP_\pm^\vep$ are the same; one uses $\pm(\sqrt{3},\sqrt{3},\sqrt{3})$ instead of $(0,0,0)$.  Therefore, it remains to show that both $\det(\bL_0)$ and $\det(\bL)$ have positive lower bounds on $\CP_0^\vep$ and $\CP_\pm^\vep$.
    
    \textit{Step 2: $\det(\bL_0)$ on $\CP_0^\vep$}.  By using $\sm_1 = \sm_3$,
    \begin{align}
        \det(\bL_0) &= 4(2\sqrt{2} + \sm_2)(2\sqrt{2} - \sm_2)(4 - \sm_2) + \sm_1^2 (40 - (3-\sm_2)^2) ~.  \label{eqn_det_bL0_1}
    \end{align}
    Due to Lemma \ref{lem_SVD_3d} (iii), $0\geq\sm_2\geq -2\sqrt{2}+\vep$.  It follows that \eqref{eqn_det_bL0_1} has a strictly positive lower bound.

    \textit{Step 3: $\det(\bL)$ on $\CP_0^\vep$}.  Let
    \begin{align*}
        s &= -\sm_2 = -\ld_1\ld_2 - \frac{(\ld_1+\ld_2)^2}{\ld_1\ld_2-1} ~,
    \end{align*}
    and $t = \ld_1\ld_2$.  They satisfy $0\leq s\leq 2\sqrt{2}-\vep$ and $t<1$ on $\CP_0^\vep$.  It follows from $(\ld_1+\ld_2)^2 \geq 0$ that $s+t \geq 0$, and it follows from $(\ld_1-\ld_2)^2 \geq 0$ that $(1-t)s \geq t(t+3)$.  In particular, on $\CP_0^\vep$,
    \begin{align}
       0\leq s\leq 2\sqrt{2}-\vep ~,~ s+t\geq0 ~,~ t\leq\oh ~.  \label{domain_bL_0}
    \end{align}

    We compute
    \begin{align}
        &\quad \det(\bL(\frac{\ld_1+\ld_2}{\ld_1\ld_2-1},\ld_1,\ld_2))\cdot(1-\ld_1\ld_2)^2 - 4(1-t)s(8-s^2) \notag \\
        \begin{split} \label{eqn_det_bL_1}
        &= s^2 (- t^4 + 2t^3 + 12 t^2 + 16 t -20) + 2 s (2 t^5 - 7 t^4 - 24 t^3 + 68 t^2 - 42 t + 12) \\
        &\quad + (4 t^6 - 4 t^5 - 93 t^4 + 78 t^3 + 240 t^2 - 408 t + 192) ~.
        \end{split}
    \end{align}
    Note that $4(1-t)s(8-s^2)$ is non-negative subject to \eqref{domain_bL_0}.  By subtracting this term, the right hand side of \eqref{eqn_det_bL_1} is quadratic in $s$.

    Denote the right hand side of \eqref{eqn_det_bL_1} by $\fl_1(s,t)$.  We claim that $\fl_1(s,t)$ is non-negative on $D = \{(s,t)\in\BR^2: 0\leq s\leq 2\sqrt{2} , s+t\geq0 , \text{ and } t\leq\oh \}$.  Note that $t\in[-2\sqrt{2},\oh]\subset[-3,\oh]$.  By \eqref{Bernstein_poly_eg}, $- t^4 + 2 t^3 + 12 t^2 + 16 t - 20$ is negative over $[-3,\oh]$.  That is to say, the coefficient of $s^2$ in $\fl_1(s,t)$ is always negative.  It follows that for every $t\in[-2\sqrt{2},\oh]$, the minimum of $\fl_1(s,t)$ must be achieved by $s=0$, $s=2\sqrt{2}$, or $s=-t$.  With this understood, the non-negativity of $\fl_1(s,t)$ on $D$ follows from the following facts:
    \begin{itemize}
        \item $\fl_1(0,t) = 4 t^6 - 4 t^5 - 93 t^4 + 78 t^3 + 240 t^2 - 408 t + 192$ is positive on $[0,\oh]$ of Bernstein degree $6$.
        \item $\fl_1(s,-s) = (1 + s) (2 + s) (2\sqrt{2} + s) (2\sqrt{2} - s) (s^2 + 9 s + 12)$ is non-negative on $[0,2\sqrt{2}]$.
        \item $\fl_1(2\sqrt{2},t) = (2\sqrt{2}+t) \big[ 4 t^5 - 4 t^4 - (101 + 20\sqrt{2}) t^3 + (174 + 106\sqrt{2}) t^2 - (88 + 76\sqrt{2}) t + (24 + 8\sqrt{2}) \big]$.  The polynomial\footnote{Its Bernstein degree on $[-2\sqrt{2},\oh]$ is much higher.} $\fl_1(2\sqrt{2},t)/(2\sqrt{2}+t)$ is positive on $[0,\oh]$ of Bernstein degree $13$, and is positive on $[-2\sqrt{2},0]$ of Bernstein degree $5$.
    \end{itemize}

    These discussions imply that $\det(\bL)$ has a positive lower bound on $\CP_0^\vep$.

    \textit{Step 4: $\det(\bL_0)$ on $\CP_{\pm}^\vep$}.  It suffices to show that $\det(\bL_0)$ has a positive lower bound when $9 \leq \sm_2 \leq 15$.  Due to \eqref{eqn_det_bL0_1}, it is obvious if $\sm_2 \leq 3 + \sqrt{10}$.
    
    If $\sm_2 > 3 + \sqrt{10}$, $40 - (3-\sm_2)^2 < 0$.  By \eqref{eqn_det_bL0_1} and Lemma \ref{lem_sms_3D} (ii),
    \begin{align*}
        \det(\bL_0) &\geq 4(2\sqrt{2} + \sm_2)(2\sqrt{2} - \sm_2)(4 - \sm_2) + \frac{1}{3}\sm_2^2 (40 - (3-\sm_2)^2) \\
        &= \frac{1}{3} (384 - 96 \sm_2 - 17 \sm_2^2 + 18 \sm_2^3 - \sm_2^4) ~,
    \end{align*}
    which is positive on $[9,15]$ of Bernstein degree $4$.

    \textit{Step 5: $\det(\bL)$ on $\CP_{\pm}^\vep$}.  As in step 3, let $t = \ld_1\ld_2 \in (1,\tau-\vep]$.  Let $u = (\ld_1 - \ld_2)^2$.  By a direct computation,
    \begin{align}
        &\quad \det(\bL(\frac{\ld_1+\ld_2}{\ld_1\ld_2-1},\ld_1,\ld_2))\cdot(1-\ld_1\ld_2)^4  \notag \\
        \begin{split} \label{eqn_det_bL_3}
        &= - 4 u^3 - u^2 (t^4 - 2 t^3 + 20 t + 20) \\
        &\quad + u ( - 6 t^6 + 16 t^5 + 58 t^4 - 152 t^3 + 200 t^2 - 292 t + 56) \\
        &\qquad + ( - t^8 - 10 t^7 + 18 t^6 + 248 t^5 - 233 t^4 - 310 t^3 + 608 t^2 - 624 t + 192) ~.
        \end{split}
    \end{align}
    
    It follows from $\ld_1\ld_3 \leq \tau - \vep$ and $\ld_2\ld_3 \leq \tau - \vep$ that
    \begin{align}
        \max\{\ld_1^2,\ld_2^2\} + \ld_1\ld_2 &\leq (\tau-\vep)(\ld_1\ld_2 - 1)  \notag \\
        \Leftrightarrow\quad  \sqrt{u(s+4t)} &\leq 2((\tau-\vep-2)t - (\tau-\vep)) - u  \label{domain_bL_2}
    \end{align}
    where we have used $2\max\{\ld_1^2,\ld_2^2\} = \sqrt{u} + \sqrt{u+4t}$.  We divide it into two cases.
    \begin{enumerate}
        \item[(a)] When $t\leq 4$, a less sharp form of \eqref{domain_bL_2} will be enough.  Note that the left hand side of \eqref{domain_bL_2} is greater than $u$, and the right hand side is no greater than $-u + 2(\tau - \vep)(t-1) - 2t$.  Since $\tau < 4\oh$, one finds that
        \begin{align} \label{domain_bL_3}
            0 \leq u < \frac{5}{2}t - \frac{9}{2} ~, \quad
            \text{and in particular }~ \frac{9}{5} < t \leq 4 ~.
        \end{align}

        \item[(b)] When $4 < t \leq \tau-\vep$, squaring both sides of \eqref{domain_bL_2} leads to
        \begin{align} \label{domain_bL_4}
            0 \leq u &\leq \frac{((\tau-\vep)(t-1) - 2t)^2}{(\tau-\vep)(t-1) - t} ~. 
        \end{align}
        Here, we implicitly assume that $\vep < \frac{1}{10}$.
    \end{enumerate}

    \textit{Step 5--case (a)}.  This case will be treated by the same framework as step 3.  Under the condition \eqref{domain_bL_3}, $u^3 < \frac{1}{8}(5t-9)^3$, and \eqref{eqn_det_bL_3} is greater than
    \begin{align*}
        \fl_2(u,t) &= -u^2 (t^4 - 2 t^3 + 20 t + 20) + u ( - 6 t^6 + 16 t^5 + 58 t^4 - 152 t^3 + 200 t^2 - 292 t + 56) \\
        &\quad + \oh ( - 2 t^8 - 20 t^7 + 36 t^6 + 496 t^5 - 466 t^4 - 745 t^3 + 1891 t^2 - 2463 t + 1113) ~.
    \end{align*}
    Since $t^4 - 2 t^3 + 20 t + 20$ is positive on $[1,5]$ of Bernstein degree $4$, the minimum of $\fl_2(u,t)$ subject to (the closure of) \eqref{domain_bL_3} must be attained by $u = 0$ or $u = \oh(5t - 9)$.  Since $\fl_2(0,t)$ is positive on $[\frac{3}{2},4]$ of Bernstein degree $8$, and $\fl_2(\oh(5t-9),t)$ is positive on $[1,4]$ of Bernstein degree $8$, $\fl_2(u,t)$ has a positive lower bound subject to \eqref{domain_bL_3}.

    \textit{Step 5--case (b)}.  We first examine the coefficients of $u^2$, $u$ and $1$ in \eqref{eqn_det_bL_3}.  Multiply these three polynomials by $-1$.  They have the following properties for $t\in[4,5]$.
    \begin{itemize}
        \item quadratic: $t^4 - 2 t^3 + 20 t + 20$ is positive on $[4,5]$ of Bernstein degree $4$, and its derivative is positive on $[4,5]$ of Bernstein degree $3$;
        \item linear: $6 t^6 - 16 t^5 - 58 t^4 + 152 t^3 - 200 t^2 + 292 t - 56$ is positive on $[4,5]$ of Bernstein degree $6$, and its derivative is positive on $[4,5]$ of Bernstein degree $5$;
        \item constant: the derivative of $t^8 + 10 t^7 - 18 t^6 - 248 t^5 + 233 t^4 + 310 t^3 - 608 t^2 + 624 t - 192$ is positive on $[4,5]$ of Bernstein degree $7$.
    \end{itemize}
    Also, note that the derivative in $t$ of the upper bound in \eqref{domain_bL_4} is
    \begin{align*}
        \frac{(\tau-\vep)(t-1)-2t}{((\tau-\vep)(t-1)-t)^2} \left[ (\tau-\vep)(\tau-\vep-3)(t-1) + 2t \right] &> 0 ~.
    \end{align*}
    It follows that the minimum of \eqref{eqn_det_bL_3} subject to \eqref{domain_bL_4} and $4 < t \leq \tau-\vep$ is achieved at
    \begin{align*}
        u = \frac{(\tau-\vep)(\tau-\vep-3)^2}{\tau-\vep-2}  \quad\text{and}\quad  t = \tau - \vep ~.
    \end{align*}
    The value of \eqref{eqn_det_bL_3} at this point is
    \begin{align*}
        & - \frac{8(\tau - \vep - 1)^5}{(\tau - \vep - 2)^3} \left[ (\tau - \vep)^6 - 6 (\tau - \vep)^5 + 6 (\tau - \vep)^4 \right. \\
        &\qquad\qquad\qquad\qquad \left. + 16 (\tau - \vep)^3 - 77 (\tau - \vep)^2 + 204 (\tau- \vep) - 192 \right]
    \end{align*}
    which is positive due to the definition of $\tau$.
\end{proof}




The regions given by Theorem \ref{thm_Bernstein_coass} with $\vep = \frac{1}{10}$ are plotted in Figure \ref{coass_inner} and \ref{coass_outer}.  The dotted curves are the zero loci on the $(\ld_1,\ld_2)$-plane of $\det(\bL_0(\ld_1,\ld_2,\ld_3))$, $\det(\bL(\ld_1,\ld_2,\ld_3))$, $\det(\bL(\ld_2,\ld_3,\ld_1))$ and $\det(\bL(\ld_3,\ld_1,\ld_2))$ subject to $\ld_1+\ld_2+\ld_3 = \ld_1\ld_2\ld_3$.  The solid curve in Figure \ref{coass_inner} is the level set $\ld_1\ld_2 + \ld_2\ld_3 + \ld_3\ld_1 = -2\sqrt{2} + \vep$; the solid curves in Figure \ref{coass_outer} are the level sets $\ld_i\ld_j = \tau - \vep$ for $i\neq j$.

\begin{figure}
\centering
    \begin{minipage}[b]{0.35\textwidth}
    \includegraphics[width=\textwidth]{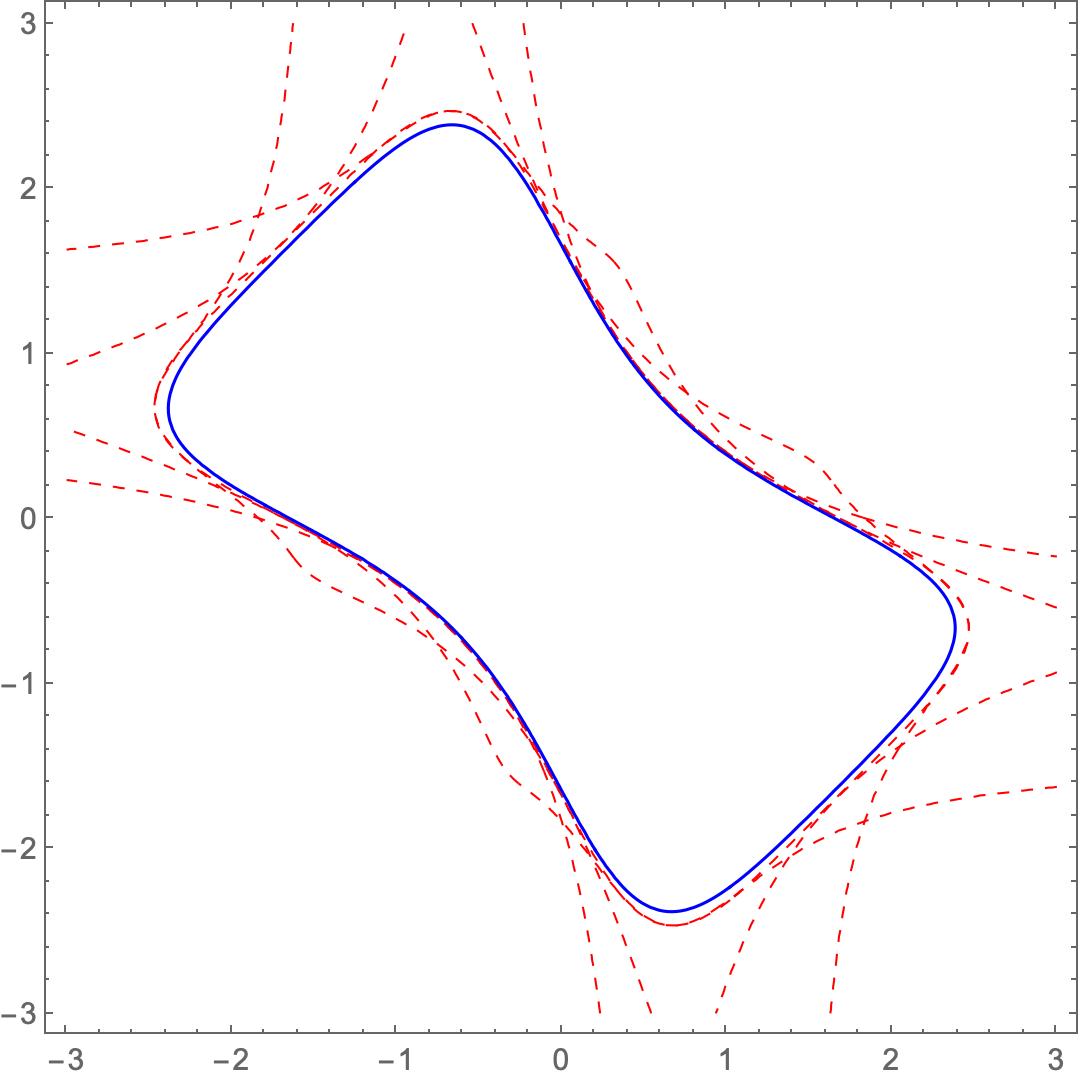}
    \caption{On $\CG_0$}
    \label{coass_inner}
    \end{minipage}
\hspace{1cm}
    \begin{minipage}[b]{0.35\textwidth}
    \includegraphics[width=\textwidth]{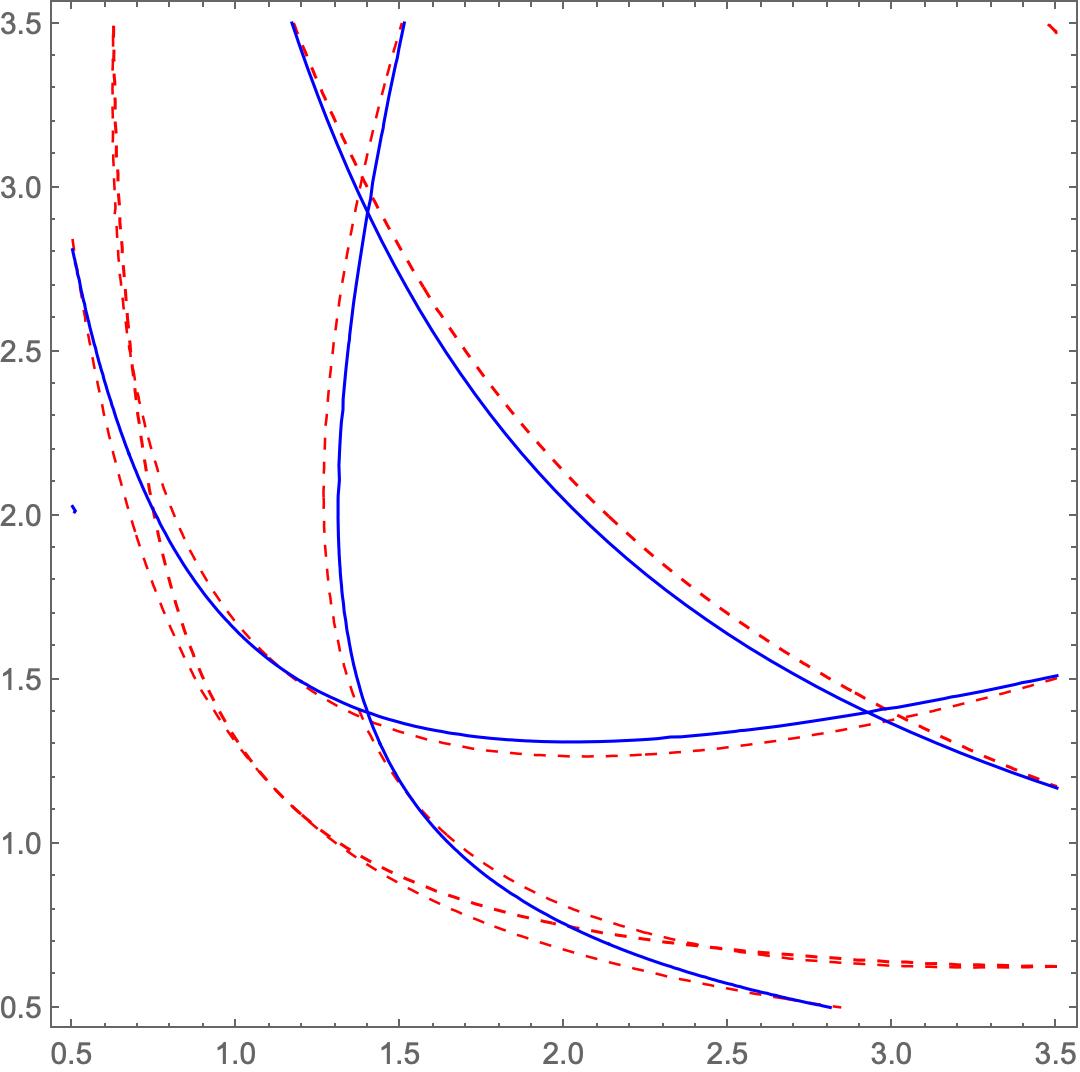}
    \caption{On $\CG_+$}
    \label{coass_outer}
    \end{minipage}
\end{figure}

\begin{rmk}    
    In fact, if $\bL \geq 0$ on a subset of $\CG_+\cup\CG_-$ which is invariant under the action of $S_3\times\{\pm1\}$, then $\bL_0 \geq 0$.  But this assertion requires more work than step 4.
\end{rmk}

\section{Bernstein theorem for Cayley Graphs} \label{sec_Bernstein_Cayley}

Let $Z\subset\BR^8$ be a Cayley submanifold.  Suppose that it is graphical over $\BR^4\times\{0\}$, and its singular values given by Lemma \ref{lem_Cayley_SVD} belongs to the component $\CS_0$ described by Lemma \ref{lem_SVD_4d}.

Set $\Om$ to be $\dd x^0\w\dd x^1\w\dd x^2\w\dd x^3$.  We can evaluate the right hand side of \eqref{Dt_Om} and \eqref{grad_Om} at any point by using the corresponding $\spin(7)$ basis produced by Corollary \ref{cor_Cayley_SVD}.  By a direct computation,
\begin{align} \begin{split} \label{log_Om_Cayley}
    \Dt(\log(*\Om)^{-1}) &= \sum_{\af,i,j}h_{\af ij}^2 + \sum_k \big( \ld_0^2h_{40k}^2 + \ld_1^2h_{51k}^2 + \ld_2^2h_{62k} + \ld_3^2h_{73k}^2 \\
    &\qquad\qquad\qquad + 2\ld_0\ld_1h_{41k}h_{50k} + 2\ld_0\ld_2h_{42k}h_{60k} + 2\ld_0\ld_3h_{43k}h_{70k}  \\
    &\qquad\qquad\qquad\quad + 2\ld_1\ld_2h_{52k}h_{61k} + 2\ld_1\ld_3h_{53k}h_{71k} + 2\ld_2\ld_3h_{63k}h_{72k}\big)
    \end{split}
\end{align}
where $0\leq i,j,k \leq 3$ and $4\leq \af\leq 7$.

Due to Lemma \ref{lem_Cayley_h}, the second fundamental form has only $24$ degrees of freedom, and can be grouped as follows.
\begin{description}
    \item[Group 4] $h_{501}$, $h_{523}$, $h_{602}$, $h_{613}$, $h_{703}$ and $h_{712}$.  For $i=0,1,2,3$, $h_{4ii}$ can be expressed as their linear combinations.
    \item[Group 5] $h_{401}$, $h_{423}$, $h_{603}$, $h_{612}$, $h_{702}$ and $h_{713}$.  For $i=0,1,2,3$, $h_{5ii}$ can be expressed as their linear combinations.
    \item[Group 6] $h_{402}$, $h_{413}$, $h_{503}$, $h_{512}$, $h_{701}$ and $h_{723}$.  For $i=0,1,2,3$, $h_{6ii}$ can be expressed as their linear combinations.
    \item[Group 7] $h_{403}$, $h_{412}$, $h_{502}$, $h_{513}$, $h_{601}$ and $h_{623}$.  For $i=0,1,2,3$, $h_{7ii}$ can be expressed as their linear combinations.
\end{description}
For $\af = 4,\ldots,7$, let $\td{\bh}_\af$ be the column vectors:
\begin{align*}
    \check{\bh}_4 &= (h_{523}, h_{613}, h_{712}, h_{501}, h_{602}, h_{703}) ~,  & \check{\bh}_5 &= (h_{603}, h_{702}, h_{423}, -h_{612}, -h_{713}, -h_{401}) ~, \\
    \check{\bh}_6 &= (h_{701}, h_{413}, h_{503}, h_{723}, h_{402}, h_{512}) ~,  & \check{\bh}_7 &= (-h_{412}, -h_{502}, -h_{601}, h_{403}, h_{513}, h_{623}) ~.
\end{align*}
After a straightforward calculation, \eqref{log_Om_Cayley} becomes
\begin{align} \label{log_Om_Cayley1} \begin{split}
    \Dt(\log(*\Om)^{-1}) &= (\check{\bh}_4)^T \,\bM(\ld_0,\ld_1,\ld_2,\ld_3)\, \check{\bh}_4 + (\check{\bh}_5)^T \,\bM(\ld_1,\ld_2,\ld_3,\ld_0)\, \check{\bh}_5 \\
    &\quad + (\check{\bh}_6)^T \,\bM(\ld_2,\ld_3,\ld_0,\ld_1)\, \check{\bh}_6 + (\check{\bh}_7)^T \,\bM(\ld_3,\ld_0,\ld_1,\ld_2)\, \check{\bh}_7 ~,
\end{split} \end{align}
where
\begin{align} \label{quadr_Cayley}
    \bM(\eta, \ld, \mu, \nu) = \begin{bmatrix}
        4 & - 1 + \ld\mu & - 1 + \nu\ld & 0 & - 1 - \eta\mu & 1 + \eta\nu \\
        - 1 + \ld\mu & 4 & - 1 + \mu\nu & 1 + \eta\ld & 0 & - 1 - \eta\nu \\
        - 1 + \nu\ld & - 1 + \mu\nu & 4 & - 1- \eta\ld & 1 + \eta\mu & 0 \\
        0 & 1 + \eta\ld & - 1 - \eta\ld & 4 + (\eta + \ld)^2 & 1 + \eta^2 & 1 + \eta^2 \\
        - 1 - \eta\mu & 0 & 1 + \eta\mu & 1 + \eta^2 & 4 + (\eta + \mu)^2 & 1 + \eta^2 \\
        1 + \eta\nu & - 1 - \eta\nu & 0 & 1 + \eta^2 & 1 + \eta^2 & 4 + (\eta + \nu)^2
    \end{bmatrix} ~.
\end{align}

We are now ready to prove the main result of this section.
\begin{thm} \label{thm_Bernstein_Cayley}
    Let $Z\subset\BR^8$ be Cayley submanifold which is entire graphical over $\BR^4\times\{0\}$.  Suppose that there exists some $\vep > 0$ such that its singular values satisfy
    \begin{align} \label{condition_Bernstein_Cayley}
        -\sqrt{6}+\vep \leq \ld_0\ld_1 + \ld_0\ld_2 + \ld_0\ld_3 + \ld_1\ld_2 + \ld_1\ld_3 + \ld_2\ld_3 \leq 0 ~.
    \end{align}
    Then, $Z$ must be affine.
\end{thm}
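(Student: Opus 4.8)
The plan is to run the same machine as in Section~\ref{sec_Bernstein_coass}, with \eqref{log_Om_Cayley1} and the single $6\times6$ quadratic form $\bM$ of \eqref{quadr_Cayley} playing the roles of \eqref{log_Om_coass1} and of $\bL_0,\bL$. The first step is to record the Cayley analogue of Proposition~\ref{prop_coass_Bernstein}: if a complete Cayley submanifold $Z\subset\BR^8$ that is entire graphical over $\BR^4\times\{0\}$ has singular values lying in a \emph{bounded} subset of $\CS$ that is invariant under $S_4\times\{\pm1\}$ (relabelling the $\ld_i$'s and a simultaneous sign flip) and over which $\bM(\ld_0,\ld_1,\ld_2,\ld_3)\geq\vep\bfI_6$ for some $\vep>0$ (the three remaining cyclic arguments are then automatic, as the region is $S_4$-invariant), then $Z$ is affine. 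Its proof is \emph{verbatim} that of Proposition~\ref{prop_coass_Bernstein}: boundedness of the region bounds the gradient, so a blow-down produces a Cayley cone $K_\infty$; on $K_\infty\setminus\{\text{origin}\}$, \eqref{log_Om_Cayley1} together with $\bM\geq\vep\bfI_6$ gives $\Dt\log(*\Om)^{-1}\geq\vep'|A_{K_\infty}|^2$; the maximum principle, Allard's theorem~\cite{A72}, and Federer dimension reduction~\cite{F70}, \cite{F69} then force $K_\infty$, and hence $Z$, to be affine, the lower-dimensional reduced cones satisfying a reduced version of the same differential inequality exactly as in the coassociative case.

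It therefore remains to produce such a region containing the origin. By Lemma~\ref{lem_SVD_4d}(iii) the upper bound $\sm_2(\bld)\leq0$ in \eqref{condition_Bernstein_Cayley} already forces $\bld\in\CS_0$, so we set $\CQ^\vep=\{\bld\in\CS_0:\sm_2(\bld)\geq-\sqrt6+\vep\}$; it is $S_4\times\{\pm1\}$-invariant, contains $(0,0,0,0)$, and is readily checked to be connected. Its boundedness follows from the graphical description of $\CS_0$ in the proof of Lemma~\ref{lem_SVD_4d}: with $\td\sm_i=\sm_i(\ld_1,\ld_2,\ld_3)$ one has $\sm_2=\tfrac{1}{\td\sm_2-1}[(\td\sm_1^2-\td\sm_2)+(\td\sm_2^2-\td\sm_1\td\sm_3)]$, whose numerator is $\geq\tfrac23(\td\sm_1^2+\td\sm_2^2)$ by Lemma~\ref{lem_sms_3D}; since $\td\sm_2<1$ on $\CS_0$, the bound $\sm_2\geq-\sqrt6+\vep$ keeps $\td\sm_2$ bounded and bounded away from $1$ and makes $\td\sm_1$ bounded, whence $\ld_i^2\leq\td\sm_1^2-2\td\sm_2$ and $\ld_0=(\td\sm_1-\td\sm_3)/(\td\sm_2-1)$ are bounded too, and $\CQ^\vep$ is compact. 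Since $\bM(0,0,0,0)$ is positive definite (a direct check of its leading principal minors), and the least eigenvalue of $\bM$ cannot vanish anywhere on the connected set $\CQ^\vep$ without $\det\bM$ vanishing there, the argument of Step~1 of Theorem~\ref{thm_Bernstein_coass} reduces the whole problem to the single scalar inequality
\begin{align*}
    \det\bM(\ld_0,\ld_1,\ld_2,\ld_3) &> 0 \qquad\text{on }\CQ^\vep,
\end{align*}
compactness then upgrading it to $\bM\geq\vep'\bfI_6$.

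To establish this inequality I would exploit the symmetry of $\bM(\eta,\ld,\mu,\nu)$ under permutations of $(\ld,\mu,\nu)$ — realised by conjugating $\bM$ by an orthogonal matrix coming from the $\RSO(3)<\spin(7)$ action on the normal frame — so that $\det\bM$ is a polynomial in $\eta$ and the elementary symmetric functions $e_1,e_2,e_3$ of $(\ld,\mu,\nu)$. The constraint $\sm_1=\sm_3$ reads $e_3=\eta(1-e_2)+e_1$, eliminating $e_3$; introducing coordinates adapted to $\CS_0$ and to the slab $-\sqrt6+\vep\leq\sm_2\leq0$ — e.g.\ $s=-\sm_2$ together with $e_2$ (subject to $e_2<1$) and the discriminant variable measuring the spread of $\ld,\mu,\nu$, in the spirit of the variables $s,t,u$ of Steps~3 and~5 of Theorem~\ref{thm_Bernstein_coass} — turns the positivity of $\det\bM$ on $\CQ^\vep$ into the non-negativity of finitely many one-variable polynomials on closed intervals, each certified by exhibiting it as positive, or non-negative, of an explicit Bernstein degree, as in \eqref{Bernstein_poly_eg}. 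This is the content of the technical lemma in Appendix~\ref{sec_claim}.

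The main obstacle is exactly this last step. The matrix $\bM$ is $6\times6$ with quadratic entries in four variables, so $\det\bM$ is a large polynomial, and $\CQ^\vep$ has a nontrivial boundary coming both from the discriminant condition (that $\ld,\mu,\nu$ be real) and from the locus $e_2\to1$ where $\CS_0$ degenerates. Organising the reduction so that the minimum of $\det\bM$ over $\CQ^\vep$ is attained on a stratum on which a single-variable Bernstein certificate suffices — and in particular seeing how the sharp constant $\sqrt6$ at $\sm_2=-\sqrt6$ emerges from the boundary analysis, the way $2\sqrt2$ did in Theorem~\ref{thm_Bernstein_coass} — is where the real work lies.
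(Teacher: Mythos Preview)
Your reduction is exactly the paper's: the Cayley analogue of Proposition~\ref{prop_coass_Bernstein}, boundedness of the region $\CQ^\vep=\{\bld\in\CS_0:\sm_2(\bld)\geq-\sqrt6+\vep\}$, and then the single scalar inequality $\det\bM>0$ on $\CQ^\vep$. Your boundedness argument via the graphical description of $\CS_0$ is a valid variant of the paper's Step~1, which works directly with $s_1=\ld_1+\ld_2+\ld_3$, $s_2=\ld_1\ld_2+\ld_2\ld_3+\ld_3\ld_1$, $w=\ld_0$ and the inequality $s_1s_3\leq\tfrac13 s_2^2$ from Lemma~\ref{lem_sms_3D}.

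Where your proposal diverges from the paper is the technical verification of $\det\bM>0$, and here your plan would not go through as written. You propose to mimic Steps~3 and~5 of Theorem~\ref{thm_Bernstein_coass}: pick coordinates, observe that $\det\bM$ is quadratic in one of them, and push the minimum to the boundary, reducing to single-variable Bernstein certificates. But after eliminating $s_3$ via $s_3=s_1+w(1-s_2)$ and fixing the level $\sm_2=-\vsm$, the determinant (called $4\ff(s_1,s_2,w)$ in the paper) is \emph{quartic} in $s_1$, not quadratic, and its leading terms do not have a definite sign; the coassociative boundary-reduction trick is unavailable.

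The paper's actual device in Appendix~\ref{sec_claim} is different. One introduces the discriminant polynomial $\fg(s_1,s_2,w)=(\ld_1-\ld_2)^2(\ld_2-\ld_3)^2(\ld_3-\ld_1)^2$, which is $\geq0$ on the locus where $\ld_1,\ld_2,\ld_3$ are real, and forms the combination $21\,\ff-(6w^6+20w^4)\,\fg$. This is still quartic in $s_1$, but now with positive leading coefficient and better-behaved lower coefficients; one then shows it has \emph{no real root} in $s_1$ for every $(w,\vsm)$ with $\vsm\in[0,\sqrt6)$ by checking the classical quartic root criteria (discriminant $\Dt>0$ together with $8\fa\fc-3\fb^2\geq0$; Lemma~\ref{lem_quartic}). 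Those criteria are polynomials in $(w,\vsm)$, and their positivity is certified by writing them as Bernstein expansions in $\vsm\in[0,\tfrac52]$ whose coefficients are polynomials in $w^2$ with nonnegative coefficients. The constant $\sqrt6$ emerges from $\ff(0,-\vsm,0)=4(3+\vsm)(6+\vsm)(6-\vsm^2)$. So the key new idea you are missing is the ``boost by the discriminant $\fg$'' followed by the quartic no-real-root test, in place of the boundary reduction that worked in the coassociative case.
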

\begin{proof}
    The strategy of the proof is similar to that in Section \ref{sec_Bernstein_coass}.  The main task is to show that \eqref{condition_Bernstein_Cayley} implies the boundedness of $(\ld_0,\ld_1,\ld_2,\ld_3)$, and the positivity of $\det(\bM(\ld_0,\ld_1,\ld_2,\ld_3))$.

    \textit{Step 1: boundedness of $\bld$}.  Let $s_1 = \ld_1+\ld_2+\ld_3$, $s_2 = \ld_1\ld_2 + \ld_2\ld_3 + \ld_3\ld_1$, $s_3 = \ld_1\ld_2\ld_3$, and $w = \ld_0$.  By Lemma \ref{lem_Cayley_SVD}, $s_3 = s_1 + w(1 - s_2)$.  Lemma \ref{lem_SVD_4d} and \eqref{condition_Bernstein_Cayley} implies that $1 - s_2 > 0$.  One infers from \eqref{condition_Bernstein_Cayley} that
    \begin{align*}
        0 \leq (s_2 + ws_1 + \sqrt{6}) (1 - s_2) &= (\sqrt{6} + s_2)(1 - s_2) + s_1(s_3 - s_1) \\
        &\leq (\sqrt{6} + s_2)(1 - s_2) + \frac{1}{3}s_2^2 - s_1^2 ~,
    \end{align*}
    where the last inequality uses Lemma \ref{lem_sms_3D} (ii).  It follows that both $|s_1|$ and $|s_2|$ are bounded.  Thus, $\ld_1^2+\ld_2^2+\ld_3^2 = s_1^2 - 2s_2$ is also bounded.  Since \eqref{condition_Bernstein_Cayley} is symmetric, $|w| = |\ld_0|$ is bounded as well.

    \textit{Step 2: positivity of $\det(\bN(\ld_0,\ld_1,\ld_2,\ld_3))$}.  The first step is to express relevant functions in terms of $s_1$, $s_2$ and $t$.  Note that $\sm_2(\ld_0,\ld_1,\ld_2,\ld_3)$ is $s_2 + ws_1$.  We will then study the minimum of $\det(\bN(\ld_0,\ld_1,\ld_2,\ld_3))$ under the constraint $s_2 + ws_1 = -\vsm$ for $\vsm\in[0,\sqrt{6})$.

    It is easy to see that $\det(\bN(\ld_0,\ld_1,\ld_2,\ld_3))$ is symmetric in $\ld_1$, $\ld_2$, $\ld_3$, and thus can be regarded as a function in $s_1$, $s_2$, $s_3$ and $w$.  With $s_3 = s_1 + w(1 - s_2)$, this determinant is a function in $s_1$, $s_2$ and $w$, and denote it by $4\cdot\ff(s_1,s_2,w)$.  By a direct computation,
    \begin{align} \begin{split}
        &\quad \ff(s_1, s_2, w) \\
        &= 14(w^2 + 1)s_1^4 + 4(-5 w^3 + 2 w)s_1^3 s_2 + (8 w^4 - 39 w^2 + 1) s_1^2 s_2^2 + 8(2 w^3 -3 w) s_1 s_2^3 \\
        &\quad + 4(w^2 - 1) s_2^4 + 4(9 w^3 + 2w) s_1^3 + (-39 w^4 + 76 w^2 - 41) s_1^2 s_2 \\
        &\quad + 2(2 w^5 - 49 w^3 + 57 w) s_1 s_2^2 + 4(w^6 + 6 w^4 - 14 w^2 + 9) s_2^3 \\
        &\quad + (-17 w^4 + 39 w^2 + 164) s_1^2 + (w^5 -32 w^3 - 62 w) s_1 s_2 + 3(3 w^4 + 19 w^2 - 16) s_2^2 \\
        &\quad + 2(- 7 w^5 + 57 w^3 -10 w) s_1 + (- 3 w^6 - 8 w^4 + 207 w^2 - 216) s_2 \\
        &\quad + (w^6 + 21 w^4 + 32 w^2 + 432) ~.
    \end{split} \label{fct_Cayley_f} \end{align}
    Note that $(\ld_1-\ld_2)^2(\ld_2-\ld_3)^2(\ld_3-\ld_1)^2$ subject to $s_3 = s_1 + w(1 - s_2)$ is also a function in $s_1$, $s_2$ and $w$.  Call this function $\fg(s_1,s_2,w)$:
    \begin{align} \begin{split}
        \fg(s_1, s_2, w)&= -4 s_1^4 + 4w s_1^3 s_2 + s_1^2 s_2^2 - 4w s_1^3 + 18 s_1^2 s_2 - 18w s_1 s_2^2 - 4 s_2^3 \\
        &\quad - 27 s_1^2 + 72w s_1 s_2 - 27 w^2 s_2^2 - 54w s_1 + 54 w^2 s_2 - 27 w^2 ~.
    \end{split} \label{fct_Cayley_g} \end{align}
    For this theorem, one only needs to analyze $\ff(s_1, s_2, w)$ on where $\fg(s_1, s_2, w) \geq 0$.

    \begin{claim} \label{claim_Cayley}
        For any $\vsm\in[0,\sqrt{6})$, $\ff(s_1, -\vsm - ws_1, w)$ is positive on where $\fg(s_1, -\vsm - ws_1, w) \geq 0$.
    \end{claim}
    
    By step 1, the subset of $\CS_0$ satisfying \eqref{condition_Bernstein_Cayley} is compact.  Together with Claim \ref{claim_Cayley}, $\det(\bM(\bld))$ has a positive lower bound subject to \eqref{condition_Bernstein_Cayley}, and this theorem follows.

    The proof of Claim \ref{claim_Cayley} relies on the classical theory for quadratic polynomials, and is in Appendix \ref{sec_claim}.  Note that $\ff(0,-\vsm,0) = 4 (3 + \vsm) (6 + \vsm) (6 - \vsm^2)$, and thus $\vsm < \sqrt{6}$ is a sharp bound.
\end{proof}

\begin{rmk}
    Suppose that $K$ is $3$-dimensional, $K\times\BR u$ is Cayley for some unit vector $u\perp K$, and $K\times\BR u$ is graphical over $\BR^4\times\{0\}$.  According to Lemma \ref{lem_Cayley_SVD}, we may assume $u$ is $\frac{1}{\sqrt{1+\ld_0^2}}(\frac{\pl}{\pl x^0} + \ld_0\frac{\pl}{\pl y^0})$ after a $\spin(7)$ change of coordinate.
    If $\ld_0 = 0$, $K$ is calibrated by
    \begin{align*}
        \dd x^{123} - \dd x^{1}\w\gm^1 - \dd x^{2}\w\gm^2 - \dd x^{3}\w\gm^3 ~,
    \end{align*}
    and is associative.
    If $\ld_0\neq0$, the projections of $u$ onto $\BR^4\times\{0\}$ and $\{0\}\times\BR^4$ pick up $\frac{\pl}{\pl x^0}$ and $\frac{\pl}{\pl y^0}$, and $K$ belongs to the orthogonal complement of these two coordinate directions, and is calibrated by
    \begin{align*}
        \left.\iota(u)\Phi\right|_{x^0 = 0 = y^0}
        &= \re\left[e^{\sqrt{-1}\arctan\ld_0}(\dd x^1+\sqrt{-1}\dd y^1)\w(\dd x^2+\sqrt{-1}\dd y^2))\w(\dd x^3+\sqrt{-1}\dd y^3)\right] ~.
    \end{align*}
    That is to say, $K$ is special Lagrangian.
\end{rmk}

For coassociative submanifolds, we establish Bernstein conditions for both $\CG_0$ and $\CG_+\cup\CG_-$. However, in the Cayley case, we were unable to identify a suitable condition for $\CS_+\cup\CS_-$.  Computer graphics suggests that it is a ``{small}" neighborhood of $\bld=(1,1,1,1)$.  It is interesting to establish a Bernstein theorem for this component.

\appendix
\section{The Proof of Claim \ref{claim_Cayley}} \label{sec_claim}

Our strategy of proving Claim \ref{claim_Cayley} is to transform the question to the positivity of two or one variable polynomials.  The disadvantage is that the degree becomes higher, and the coefficients become large numbers.  It is interesting to see whether one can apply the theory of \emph{positive polynomials} to prove it; see for instance \cites{Schm91, Put93}.  To the best of our knowledge, those positivstellens\"atze are not quite constructive.

Note that both \eqref{fct_Cayley_f} and \eqref{fct_Cayley_g} are quartic polynomials in $s_1$.  We first recall a classical result on the roots of quartic polynomials with real coefficients.

\begin{lem}[\cite{Rees22}] \label{lem_quartic}
    For a quartic polynomial of real coefficients $a x^4 + b x^3 + c x^2 + d x + e$ with $a > 0$, if its discriminant\footnote{$\Dt = b^2 c^2 d^2 - 4 a c^3 d^2 - 4 b^3 d^3 + 18 a b c d^3 - 27 a^2 d^4 - 4 b^2 c^3 e + 16 a c^4 e + 18 b^3 c d e - 80 a b c^2 d e - 6 a b^2 d^2 e + 144 a^2 c d^2 e - 27 b^4 e^2 + 144 a b^2 c e^2 - 128 a^2 c^2 e^2 - 192 a^2 b d e^2 + 256 a^3 e^3$} $\Dt > 0$ and either
    \begin{enumerate}
        \item $8ac - 3b^2 \geq 0$ or
        \item $8ac - 3b^2 < 0$ and $-3 b^4 - 9 a b^4 + 16 a b^2 c + 48 a^2 b^2 c - 64 a^3 c^2 - 
 64 a^2 b d + 256 a^3 e > 0$,
    \end{enumerate}
    then it has no real roots.  In particular, the polynomial has a positive lower bound over $x\in\BR$.
\end{lem}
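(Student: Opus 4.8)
The plan is to recognize Lemma~\ref{lem_quartic} as the classical classification of the roots of a real quartic and to reduce everything to the \emph{depressed} quartic. Dividing $a x^4 + b x^3 + c x^2 + d x + e$ by $a > 0$ leaves the roots unchanged, and the Tschirnhaus translation $x \mapsto x - b/(4a)$ then produces $y^4 + p y^2 + q y + r$. Under these two operations the discriminant is translation invariant and merely picks up the positive factor $a^{-6}$, so the hypothesis $\Dt > 0$ is preserved; a direct substitution shows moreover that $8ac - 3b^2$ becomes $8p$ and that the polynomial $D$ appearing in hypothesis~(ii) becomes a positive multiple of $64 r - 16 p^2$. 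Hence it suffices to prove: if $\Dt > 0$ and either $p \geq 0$, or $p < 0$ together with $64 r - 16 p^2 > 0$, then $y^4 + p y^2 + q y + r$ has no real root.

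Two classical facts about a squarefree real quartic do the work (squarefreeness holds here because $\Dt \ne 0$). First, the number of real roots is even, hence $0$, $2$, or $4$, and the sign of the discriminant pins down the middle case: a real quartic with exactly two real roots and one complex-conjugate pair has $\Dt < 0$ (this follows at once from $\Dt = a^6\prod_{i<j}(\alpha_i-\alpha_j)^2$ by pairing conjugate differences). Therefore $\Dt > 0$ forces either $0$ or $4$ real roots. Second, among quartics with $\Dt > 0$ the totally real ones are exactly those with $P < 0$ and $D < 0$, where $P = 8ac - 3b^2$ and $D$ is the invariant of hypothesis~(ii). This sign criterion is the graphical analysis of \cite{Rees22}; it is obtained by tracking how the signs of $\Dt$, $P$, $D$ constrain the configuration of the roots of the resolvent cubic, whose roots are the pairwise products $\alpha_i\alpha_j + \alpha_k\alpha_l$ of the four roots $\alpha_1,\dots,\alpha_4$ of the quartic. (One may equally argue via the signature of the Hermite quadratic form of the quartic, but the resolvent-cubic route is the most elementary.)

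Granting these, the lemma is immediate. In case (i) we have $P = 8ac - 3b^2 \geq 0$, so the pair $(P, D)$ is not of the form (negative, negative); since $\Dt > 0$, the quartic has no real root. In case (ii) we have $P < 0$ but $D > 0$, so again $(P,D)$ is not (negative, negative), and the same conclusion holds. Finally, a real polynomial with positive leading coefficient and no real root is everywhere positive and tends to $+\infty$ at both ends, so it attains a strictly positive minimum; this gives the last assertion of the lemma.

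The only genuinely laborious part of this plan is the normalization bookkeeping: checking that the explicit polynomial written in hypothesis~(ii) really is a positive scalar multiple of $64 r - 16 p^2$ after the shift $b \mapsto 0$ and the rescaling by powers of $a$ (and, similarly, that $8ac - 3b^2 = 8p$ in the depressed variables). This is a finite, conceptually empty computation. A secondary point needing a word of care is the behaviour on the boundary loci $P = 0$ and $D = 0$: since $\Dt > 0$ already excludes repeated roots, one checks directly --- for instance, $y^4 + q y + r$ is convex and hence has at most two real zeros --- that $P = 0$ together with $\Dt > 0$ forces $0$ real roots, which is why the non-strict inequality in (i) is admissible.
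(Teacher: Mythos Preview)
Your proposal is correct and is precisely the approach the paper takes: the paper does not prove this lemma but simply cites \cite{Rees22} and remarks that ``the conditions are written in terms of the reduced form of a quartic polynomial,'' which is exactly your reduction to the depressed quartic $y^4+py^2+qy+r$ together with the classical sign criterion on $(P,D)$. Your sketch supplies more detail than the paper does, but the route is the same.
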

In \cite{Rees22}, the conditions are written in terms of the reduced form of a quartic polynomial.

\begin{proof}[Proof of Claim \ref{claim_Cayley}]
    \textit{Step 1: boost the positivity of $\ff$}.
    The leading order terms of $\ff(s_1, -\vsm - ws_1, w)$ in $s_1$ and $w$ do not all have good sign.  One can use $\fg(s_1, -\vsm - ws_1, w)$ to help:
    \begin{align} \label{Cayley_quartic} \begin{split}
        &\quad 21\cdot\ff(s_1, -\vsm - ws_1, w) - (6 w^6 + 20 w^4)\cdot\fg(s_1, -\vsm - ws_1, w) \\
        &= \fa(w) s_1^4 + \fb(w,\vsm) s_1^3 + \fc(w,\vsm) s_1^2 + \fd(w,\vsm) s_1 + \fe(w,\vsm) ~,
    \end{split} \end{align}
    where
    {\allowdisplaybreaks
    \begin{align*}
        \fa(w) &= 18 w^8 + 101 w^4 + 147 w^2 + 294 ~, \\
        \fb(w,\vsm) &= 4(3\vsm - 2) w^7 + (- 296\vsm + 377) w^5 - 42(\vsm - 19) w^3 - 21(6\vsm - 49) w ~, \\
        \begin{split} \fc(w,\vsm) &= 162 w^{10} - 108(\vsm - 9) w^8 - 3(2\vsm^2 + 252\vsm - 583) w^6 \\
        &\quad + (- 356\vsm^2 + 591\vsm + 2052) w^4 + 21(9\vsm^2 + 44\vsm + 53) w^2 + 21(\vsm^2 + 41\vsm + 164) ~, \end{split} \\
        \begin{split} \fd(w,\vsm) &= 324(\vsm + 1) w^9 - 9 (24\vsm^2 - 168 \vsm - 163) w^7 - 6 (218\vsm^2 - 296\vsm - 159) w^5 \\
        &\quad + 21(70\vsm^2 + 146\vsm - 93) w^3 + 42(4\vsm^3 + 3\vsm^2 - 17\vsm + 98) w ~, \end{split} \\
        \begin{split} \fe(w,\vsm) &= 162(\vsm + 1)^2 w^8 - 3 (36\vsm^3 - 180\vsm^2 - 381\vsm - 187) w^6 \\
        &\quad + (- 584\vsm^3 + 189\vsm^2 + 168\vsm + 441) w^4 + 21(4\vsm^4 + 56\vsm^3 + 57\vsm^2 - 207\vsm + 32) w^2 \\
        &\quad + 84 (3 + \vsm) (6 + \vsm) (6 - \vsm^2) ~. \end{split}
    \end{align*}
    }
    
    It suffices to show that \eqref{Cayley_quartic} is positive for any $s_1$, $w$ and $\vsm\in[0,\sqrt{6})$.  To apply Lemma \ref{lem_quartic}(i), one needs to verify that $\fa(w) > 0$, $8\cdot\fa(w)\cdot\fc(w,\vsm) - 3(\fb(w,\vsm))^2 \geq 0$ and $\Dt(w,\vsm) > 0$, where $\Dt(w,\vsm)$ is the discriminant of \eqref{Cayley_quartic} as a quartic polynomial in $s_1$. 

    \textit{Step 2: $\fa$ and $8\fa\fc - 3\fb^2$}.
    Obviously, $\fa(w) > 0$.  The function $(8\fa\fc - 3\fb^2)(w,\vsm)$ is a quadratic polynomial in $\vsm$, with coefficients being polynomials in $w$.  Express it in terms of degree $2$ Bernstein polynomials \eqref{Bernstein_poly} for $\vsm\in[0,\frac{5}{2}]\supset[0,\sqrt{6})$:
    \begin{align*} 
        8\cdot\fa(w)\cdot\fc(w,\vsm) - 3(\fb(w,\vsm))^2 &= \fq_0(w)\cdot\fb_{0,2}(\frac{2\vsm}{5}) + \fq_1(w)\cdot\fb_{1,2}(\frac{2\vsm}{5}) + \fq_2(w)\cdot\fb_{2,2}(\frac{2\vsm}{5})
    \end{align*}
    where
    {\allowdisplaybreaks
    \begin{align*}
        \begin{split}
            \fq_0(w) &= 3 \left( 2700096 + 1163799 w^2 + 1330364 w^4 + 1062698 w^6 + 1580412 w^8 \right. \\
            &\qquad \left. + 903159 w^{10} + 429824 w^{12} + 127520 w^{14} + 46656 w^{16} + 7776 w^{18} \right) ~,
        \end{split} \\
        \begin{split}
            \fq_1(w) &= 3 \left(3543876 + 2815344 w^2 + 3011589 w^4 + 1886493 w^6 + 1951477 w^8 \right. \\
            &\qquad \left. + 905359 w^{10} + 411694 w^{12} + 82400 w^{14} + 40176 w^{16} + 7776 w^{18} \right) ~,
        \end{split} \\
        \begin{split}
            \fq_2(w) &= 13471668 + 16035642 w^2 + 10141992 w^4 + 4948839 w^6 + 4735126 w^8 \\
            &\qquad + 1238577 w^{10} + 993492 w^{12} + 103740 w^{14} + 101088 w^{16} + 23328 w^{18} ~.
        \end{split}
    \end{align*}
    }
    It follows that $(8\fa\fc - 3\fb^2)(w,\vsm) > 0$ for any $w\in\BR$ and $\vsm\in[0,\sqrt{6})$.

    \textit{Step 3: the discriminant of \eqref{Cayley_quartic}}.  The discriminant $\Dt(w,\vsm)$ is a degree $12$ polynomial in $\vsm$, whose coefficients are polynomials in $w^2$.  Note that
    \begin{align*}
        \Dt(0,\vsm) &= 2^7\cdot3^6\cdot7^7\cdot(3 + \vsm) (6 + \vsm) (6 - \vsm^2) (2704 + 1352\vsm + 4697\vsm^2 + 2098\vsm^3 + 225\vsm^4)^2
    \end{align*}
    is positive for $\vsm\in[0,\sqrt{6})$.  Similar to step 2, express $\Dt(w,\vsm) - \Dt(0,\vsm)$ in terms of degree $12$ Bernstein polynomials \eqref{Bernstein_poly} for $\vsm\in[0,\frac{5}{2}]\supset[0,\sqrt{6})$:
    \begin{align*}
        \Dt(w,\vsm) - \Dt(0,\vsm) &= w^2 \sum_{j=0}^{12}\fr_j(w^2)\cdot\fb_{j,12}(\frac{2\vsm}{5})
    \end{align*}
    where $\fr_j(w^2)$'s are polynomials in $w^2$.

    For $j\in\{0,1,\ldots,11\}$, $\fr_j(0) > 0$, and the coefficient of $\fr_j(w^2)$ in $w^{2k}$ is non-negative for every $k\in\BN$.  Therefore, $\fr_j(w^2) > 0$ for all $w$.  For instance, up to a factor of $2^4\cdot3^3$, $\fr_0(w^2)$ is
    {\allowdisplaybreaks
    \begin{align*}
        &\quad 1385917274395600128 + 17882051877103563072 w^2 + 34045174584434955312 w^4 \\
        & + 37514728151471928924 w^6 + 45171938822844834756 w^8 + 56967180342756234327 w^{10} \\
        & + 64089597886958676060 w^{12} + 62394851560231359960 w^{14} + 57166154554106018456 w^{16} \\
        & + 50558926859757015672 w^{18} + 41555918902612042992 w^{20} + 31350500674098250736 w^{22} \\
        & + 21809242957716591384 w^{24} + 14077181559906761562 w^{26} + 8343090076003475816 w^{28} \\
        & + 4478601647575856184 w^{30} + 2153781244629264384 w^{32} + 917430601362808380 w^{34} \\
        & + 342862102135296960 w^{36} + 111690528108904404 w^{38} + 31346704386587268 w^{40} \\
        & + 7373775300428007 w^{42} + 1388666535802524 w^{44} + 196789706537040 w^{46} \\
        & + 19301707065096 w^{48} + 1145559339252 w^{50} + 30304891584 w^{52} ~.
    \end{align*}
    }

    For the last coefficient polynomial,
    \begin{align*}
        \fr_{12}(w^2) &= 2^{-14} (\fs_2(w^2) + \fs_1(w^2) + \fs_0(w^2)) ~,
    \end{align*}
    where
    {\allowdisplaybreaks
    \begin{align*}
        \fs_2(w^2) &= 10368 w^{36} (80813044224 w^{16} + 16087971538656 w^{12} \\
        &\qquad\qquad  - 108674110379376 w^8 - 1558904597746992 w^4 + 11809499692540555) ~, \\
        \fs_1(w^2) &= 1296 w^{34} (17336846866176 w^{16} + 223883373358080 w^{12} \\
        &\qquad\qquad  - 3827466501483744 w^8 - 26665451576093568 w^4 + 724970964850265675) ~,
    \end{align*}
    }and $\fs_0(w^2)$ has non-negative coefficients in $w^{2k}$ for every $k\in\BN$.  Up to the factor $2^{-14}$, $\fs_2(w^2) + \fs_1(w^2)$ are the top $10$ degree terms of $\fr_{12}(w^2)$.  The lower order terms, $\fs_0(w^2)$, have good coefficients.
    
    Define two degree $4$ polynomials $\td{\fs}_2(u)$ and $\td{\fs}_1(u)$ by $\fs_2(w^2) = 10368 w^{36}\cdot\td{\fs}_2(w^4)$ and $\fs_1(w^2) = 1296 w^{34}\cdot\td{\fs}_1(w^4)$.  One can check that $\td{\fs}_2(u)$ satisfies the condition of Lemma \ref{lem_quartic} (ii), and hence $\td{\fs}_2(w^4) > 0$ for all $w$.  For $\td{\fs}_1(u)$, its discriminant is negative, and has two real roots and two complex conjugate roots.  By the location of roots theorem, the two real roots of $\td{\fs}_1(u)$ are within $[-20,-10]$ and $[-210,-200]$.  It follows that $\td{\fs}_1(u) > 0$ when $u\geq0$, and $\td{\fs}_1(w^4) > 0$ for all $w$.  This finishes the proof of Claim \ref{claim_Cayley}.
\end{proof}

\begin{bibdiv}
\begin{biblist}

\bib{A72}{article}{
   author={Allard, William K.},
   title={On the first variation of a varifold},
   journal={Ann. of Math. (2)},
   volume={95},
   date={1972},
   pages={417--491},
}

\bib{EH90}{article}{
   author={Ecker, Klaus},
   author={Huisken, Gerhard},
   title={A Bernstein result for minimal graphs of controlled growth},
   journal={J. Differential Geom.},
   volume={31},
   date={1990},
   number={2},
   pages={397--400},
}

\bib{F69}{book}{
   author={Federer, Herbert},
   title={Geometric measure theory},
   series={Die Grundlehren der mathematischen Wissenschaften},
   volume={Band 153},
   publisher={Springer-Verlag New York, Inc., New York},
   date={1969},
   pages={xiv+676},
}

\bib{F70}{article}{
   author={Federer, Herbert},
   title={The singular sets of area minimizing rectifiable currents with codimension one and of area minimizing flat chains modulo two with arbitrary codimension},
   journal={Bull. Amer. Math. Soc.},
   volume={76},
   date={1970},
   pages={767--771},
}

\bib{FC80}{article}{
   author={Fischer-Colbrie, D.},
   title={Some rigidity theorems for minimal submanifolds of the sphere},
   journal={Acta Math.},
   volume={145},
   date={1980},
   number={1-2},
   pages={29--46},
}

\bib{HL82}{article}{
   author={Harvey, Reese},
   author={Lawson, H. Blaine, Jr.},
   title={Calibrated geometries},
   journal={Acta Math.},
   volume={148},
   date={1982},
   pages={47--157},
}

\bib{JXY18}{article}{
   author={Jost, J.},
   author={Xin, Y. L.},
   author={Yang, Ling},
   title={A spherical Bernstein theorem for minimal submanifolds of higher codimension},
   journal={Calc. Var. Partial Differential Equations},
   volume={57},
   date={2018},
   number={6},
   pages={Paper No. 166, 21},
}

\bib{K05}{article}{
   author={Karigiannis, Spiro},
   title={Deformations of $G_2$ and ${\rm Spin}(7)$ structures},
   journal={Canad. J. Math.},
   volume={57},
   date={2005},
   number={5},
   pages={1012--1055},
}

\bib{LO77}{article}{
   author={Lawson, H. B., Jr.},
   author={Osserman, R.},
   title={Non-existence, non-uniqueness and irregularity of solutions to the minimal surface system},
   journal={Acta Math.},
   volume={139},
   date={1977},
   number={1-2},
   pages={1--17},
}


\bib{PR00}{article}{
   author={Powers, Victoria},
   author={Reznick, Bruce},
   title={Polynomials that are positive on an interval},
   journal={Trans. Amer. Math. Soc.},
   volume={352},
   date={2000},
   number={10},
   pages={4677--4692},
}

\bib{Put93}{article}{
   author={Putinar, Mihai},
   title={Positive polynomials on compact semi-algebraic sets},
   journal={Indiana Univ. Math. J.},
   volume={42},
   date={1993},
   number={3},
   pages={969--984},
}

\bib{Rees22}{article}{
   author={Rees, E. L.},
   title={Graphical Discussion of the Roots of a Quartic Equation},
   journal={Amer. Math. Monthly},
   volume={29},
   date={1922},
   number={2},
   pages={51--55},
}

\bib{SW17}{article}{
   author={Salamon, Dietmar A.},
   author={Walpuski, Thomas},
   title={Notes on the octonions},
   conference={
      title={Proceedings of the G\"okova Geometry-Topology Conference 2016},
   },
   book={
      publisher={G\"okova Geometry/Topology Conference (GGT), G\"okova},
   },
   date={2017},
   pages={1--85},
}

\bib{Schm91}{article}{
   author={Schm\"udgen, Konrad},
   title={The $K$-moment problem for compact semi-algebraic sets},
   journal={Math. Ann.},
   volume={289},
   date={1991},
   number={2},
   pages={203--206},
}

\bib{TW02}{article}{
   author={Tsui, Mao-Pei},
   author={Wang, Mu-Tao},
   title={A Bernstein type result for special Lagrangian submanifolds},
   journal={Math. Res. Lett.},
   volume={9},
   date={2002},
   number={4},
   pages={529--535},
}

\bib{Wang02}{article}{
   author={Wang, Mu-Tao},
   title={Long-time existence and convergence of graphic mean curvature flow in arbitrary codimension},
   journal={Invent. Math.},
   volume={148},
   date={2002},
   number={3},
   pages={525--543},
}

\bib{Wang03}{article}{
   author={Wang, Mu-Tao},
   title={On graphic Bernstein type results in higher codimension},
   journal={Trans. Amer. Math. Soc.},
   volume={355},
   date={2003},
   number={1},
   pages={265--271},
}

\bib{Yuan02}{article}{
    author={Yuan, Yu},
    title={A Bernstein problem for special Lagrangian equations},
    journal={Invent. Math.},
    volume={150}, 
    date={2002}, 
    number={1}, 
    pages={117–-125},
}

\end{biblist}
\end{bibdiv}

\end{document}